\DeclareRobustCommand{\em}{%
	\@nomath\em \if b\expandafter\@car\f@series\@nil
	\normalfont \else \slshape \fi}
\newcommand{\spaceplease}{\needspace{5\baselineskip}}
\newcommand{\Ao}{\cat{A}_{\text{\normalfont \bfseries !}}}
\newcommand{\Fo}{F_{\text{\normalfont \bfseries !}}}
\newcommand{\vb}{\mathbb{V}}
\newcommand{\ot}{\boxtimes}
\tikzstyle{tikzfig}=[baseline=-0.25em,scale=0.5]
\tikzstyle{none}=[inner sep=0mm]
\newcommand{\tikzfig}[1]{%
	{\tikzstyle{every picture}=[tikzfig]
		\IfFileExists{#1.tikz}
		{\input{#1.tikz}}
		{%
			\IfFileExists{./figures/#1.tikz}
			{\input{./figures/#1.tikz}}
			{\tikz[baseline=-0.5em]{\node[draw=red,font=\color{red},fill=red!10!white] {\textit{#1}};}}%
	}}%
}
\tikzstyle{every loop}=[]
\tikzstyle{black dot}=[fill=black, draw=black, shape=circle, minimum size=3pt, inner sep=0pt]
\tikzstyle{black dot small}=[fill=black, draw=black, shape=circle, minimum size=2pt, inner sep=0pt]
\tikzstyle{fblack dot}=[fill=black, draw=red, shape=circle, minimum size=2pt, inner sep=0pt]
\tikzstyle{wbox}=[fill=white, draw=black, shape=rectangle, minimum height=0.5cm, minimum width=0.01cm]
\tikzstyle{bbox}=[fill=white, draw=blue, shape=rectangle, minimum height=0.5cm, minimum width=0.01cm]
\tikzstyle{rbox}=[fill=white, draw=red, shape=rectangle, minimum height=0.5cm, minimum width=0.01cm]
\tikzstyle{bwbox}=[draw=blue, shape=rectangle, minimum width=2cm, minimum height=0.5cm]
\tikzstyle{bbwbox}=[draw=blue, shape=rectangle, minimum width=1cm, minimum height=1cm]
\tikzstyle{big white circle}=[fill=white, draw=black, shape=circle, minimum width=0.75cm]
\tikzstyle{white dot big}=[fill=white, draw=black, shape=circle, inner sep=1pt]
\tikzstyle{white dot}=[fill=white, draw=black, shape=circle, minimum size=3pt, inner sep=0pt]
\tikzstyle{flat box}=[fill=white, draw=black, shape=rectangle, minimum width=1.3cm, minimum height=0.5cm,fill=morphismcolor]
\tikzstyle{square}=[fill=white, draw=black, shape=rectangle]
\tikzstyle{flat box 2}=[fill=white, draw=black, shape=rectangle, minimum height=0.5cm, minimum width=0.01cm,fill=morphismcolor]
\tikzstyle{bigbox}=[fill=white, draw=black, shape=rectangle, minimum height=0.5cm, minimum width=0.8cm,fill=white]
\tikzstyle{over }=[front]
\tikzstyle{theta}=[fill=blue, draw=blue, shape=ellipse, minimum height=6pt, minimum width=6pt, inner sep=0pt]
\tikzstyle{thetabig}=[fill=blue, draw=blue, shape=ellipse, minimum width=1cm, minimum height=0.01cm]
\tikzstyle{thetainv}=[fill=blue, draw=red, shape=ellipse, minimum height=6pt, minimum width=6pt, inner sep=0pt]
\tikzstyle{thetabinv}=[fill=blue, draw=red, shape=ellipse, minimum width=1cm, minimum height=0.01cm]
\tikzstyle{bigdisk}=[draw=black, shape=circle, minimum width=3cm]
\tikzstyle{wdisk}=[shape=circle, minimum width=0.48cm,fill=white]
\tikzstyle{bigdisk2}=[draw=black, fill=lightgray, shape=circle, minimum width=3cm]
\tikzstyle{little disk}=[fill=white, draw=black, shape=circle, minimum width=0.5cm]
\tikzstyle{mid arrow}=[-, postaction={on each segment={mid arrow}}]
\tikzstyle{end arrow}=[->]
\tikzstyle{mover}=[-, link]
\tikzstyle{string}=[-, draw=blue,postaction={on each segment={mid arrow}}]
\tikzstyle{stringd}=[-, dotted,draw=blue,postaction={on each segment={mid arrow}}]
\tikzstyle{red}=[-, dotted,draw=red]
\tikzstyle{mydots}=[-,dotted,dashed,draw=gray]
\tikzstyle{mydotsblack}=[-,dotted,dashed,draw=black]
\tikzstyle{open}=[-, line width=1pt,draw=blue]
\tikzstyle{thick}=[-,line width=1pt]
\tikzstyle{rarrow}=[->,draw=red]
\tikzstyle{red mid arrow}=[-, draw={rgb,255: red,214; green,42; blue,51}, postaction={on each segment={mid arrow}}, line width=1pt]
\tikzstyle{RED}=[-, draw={rgb,255: red,214; green,42; blue,51}]
\tikzstyle{REDdashed}=[-,dashed, draw={rgb,255: red,214; green,42; blue,51}]
\tikzstyle{REDarrow}=[->, draw={rgb,255: red,214; green,42; blue,51}]
\tikzstyle{darrow}=[->,dotted]
\tikzstyle{blue}=[-, draw=blue]
\tikzstyle{blue mid arrow}=[-, draw={rgb,255: red,23; green,37; blue,167}, postaction={on each segment={mid arrow}}, line width=1pt]
\tikzstyle{over}=[-, link]
\tikzstyle{bover}=[-, blink]
\tikzstyle{mover}=[-, link]
\tikzstyle{mapsto}=[{|->}]
\tikzset{
	on each segment/.style={
		decorate,
		decoration={
			show path construction,
			moveto code={},
			lineto code={
				\path [#1]
				(\tikzinputsegmentfirst) -- (\tikzinputsegmentlast);
			},
			curveto code={
				\path [#1] (\tikzinputsegmentfirst)
				.. controls
				(\tikzinputsegmentsupporta) and (\tikzinputsegmentsupportb)
				..
				(\tikzinputsegmentlast);
			},
			closepath code={
				\path [#1]
				(\tikzinputsegmentfirst) -- (\tikzinputsegmentlast);
			},
		},
	},
	mid arrow/.style={postaction={decorate,decoration={
				markings,
				mark=at position .7 with {\arrow[#1]{stealth}}
	}}},
}
\tikzset{%
	link/.style    = { white, double = black, line width = 1.8pt,
		double distance = 0.4pt },
	channel/.style = { white, double = black, line width = 0.8pt,
		double distance = 0.8pt },
}
\tikzstyle{tikzfig}=[baseline=-0.25em,scale=0.5]
\tikzstyle{none}=[inner sep=0mm]
\tikzstyle{every loop}=[]
\newtheoremstyle{mytheorem}
{\topsep}
{\topsep}
{\slshape}
{0pt}
{\bfseries}
{.}
{ }
{\thmname{#1}\thmnumber{ #2}\thmnote{ {\normalfont\slshape(#3)}}}
\newtheoremstyle{mydefinition}
{\topsep}
{\topsep}
{\normalfont}
{0pt}
{\bfseries}
{.}
{ }
{\thmname{#1}\thmnumber{ #2}\thmnote{ {\normalfont\slshape(#3)}}}
\theoremstyle{mytheorem}
\newtheorem{theorem}{Theorem}[section]
\newtheorem*{rep@theorem}{\rep@title}
\newcommand{\newreptheorem}[2]{%
	\newenvironment{rep#1}[1]{%
		\def\rep@title{#2 \ref{##1}}%
		\begin{rep@theorem}}%
		{\end{rep@theorem}}}
\newtheorem{lemma}[theorem]{Lemma}
\newtheorem{proposition}[theorem]{Proposition}
\newtheorem{corollary}[theorem]{Corollary}
\theoremstyle{mydefinition}
\newtheorem{definition}[theorem]{Definition}
\newenvironment{example}
{\pushQED{\qed}\exx}
{\popQED\endexx}
\newenvironment{remark}
{\pushQED{\qed}\remm}
{\popQED\endremm}
\numberwithin{equation}{section}
\newenvironment{pnum}{\begin{enumerate}[topsep=2pt,parsep=2pt,partopsep=2pt,itemsep=0pt,label={(\roman{*})}]}{\end{enumerate}}
\DeclareMathSymbol{\Phiit}{\mathalpha}{letters}{"08}\let\Phi\undefined\newcommand{\Phi}{\Phiit}
\DeclareMathSymbol{\Psiit}{\mathalpha}{letters}{"09}\let\Psi\undefined\newcommand{\Psi}{\Psiit}
\DeclareMathSymbol{\Sigmait}{\mathalpha}{letters}{"06}\let\Sigma\undefined\newcommand{\Sigma}{\Sigmait}
\DeclareMathSymbol{\Xiit}{\mathalpha}{letters}{"04}
\DeclareMathSymbol{\Lambdait}{\mathalpha}{letters}{"03}\let\Lambda\undefined\newcommand{\Lambda}{\Lambdait}
\DeclareMathSymbol{\Piit}{\mathalpha}{letters}{"05}\let\Pi\undefined\newcommand{\Pi}{\Piit}
\DeclareMathSymbol{\Gammait}{\mathalpha}{letters}{"00}\let\Gamma\undefined\newcommand{\Gamma}{\Gammait}
\DeclareMathSymbol{\Omegait}{\mathalpha}{letters}{"0A}\let\Omega\undefined\newcommand{\Omega}{\Omegait}
\DeclareMathSymbol{\Upsilonit}{\mathalpha}{letters}{"07}\let\Upsilon\undefined\newcommand{\Upsilon}{\Upilonit}
\DeclareMathSymbol{\Thetait}{\mathalpha}{letters}{"02}\let\Theta\undefined\newcommand{\Theta}{\Thetait}
\def\Hom{\mathrm{Hom}}
\let\O\undefined\newcommand{\O}{\catf{O}}
\def\End{\catf{End}}
\def\id{\mathrm{id}}
\newcommand{\envo}{\catf{U}_{\!\!\tiny\int} \cat{O}}
\def\dim{\mathrm{dim}}
\let\to\undefined\newcommand{\to}{\longrightarrow}
\let\mapsto\undefined\newcommand{\mapsto}{\longmapsto}
\newcommand{\catf}[1]{\mathsf{#1}}
\newcommand{\Map}{\catf{Map}}
\def\op{\mathrm{op}}
\newcommand{\inthom}{\underline{\catf{Hom}}}
\newcommand{\framed}{\catf{f}E_2}
\newcommand{\ra}[1]{\xrightarrow{\   #1    \ }}
\def\Grpd{\catf{Grpd}}
\newcommand{\Lexf}{\catf{Lex}^\catf{f}}
\newcommand{\Rexf}{\catf{Rex}^\catf{f}}
\def\Cat{\catf{Cat}}
\def\Set{\catf{Set}}
\def\sSet{\catf{sSet}}
\newcommand{\SNC}{\catf{SN}_\cat{C}}
\newcommand{\hocolimsub}[1]{\underset{#1}{\operatorname{hocolim}}\,}
\newcommand{\Lex}{\catf{Lex}}
\newcommand{\omf}{\mathfrak{B}}
\newcommand{\vect}{\catf{vect}}
\newcommand{\cat}[1]{\mathcal{#1}}
\newcommand{\Hbdy}{\catf{Hbdy}}
\newcommand{\Surf}{\catf{Surf}}
\newcommand{\Graphs}{\catf{Graphs}}
\newcommand{\Legs}{\catf{Legs}}
\newcommand{\Calc}{\catf{Calc} }
\newcommand{\RForests}{\catf{RForests}}
\newcommand{\Forests}{\catf{Forests}}
\newcommand{\As}{\catf{As}}
\newcommand{\Envint}{\catf{U}_{\!\text{\footnotesize $\int$}}\,}
\newcommand{\Aut}{\operatorname{Aut}}
\definecolor{Blue}  {rgb} {0.282352,0.239215,0.803921}
\definecolor{Green} {rgb} {0.133333,0.545098,0.133333}
\definecolor{Red}   {rgb} {0.803921,0.000000,0.000000}
\definecolor{Violet}{rgb} {0.580392,0.000000,0.827450}
\newtheorem*{theorem*}{Theorem}
\newtheorem*{corollary*}{Corollary}
\renewcommand\section{\@startsection {section}{1}{\z@}%
	{-3.5ex \@plus -1ex \@minus -.2ex}%
	{2.3ex \@plus.2ex}%
	{\normalfont\scshape\centering}}
\titleformat{\subsection}[runin]
{\normalfont\bfseries}
{\thesubsection}
{0.5em}
{}
[.]
\renewcommand\thepart{\Alph{part}}
\begin{document}

	\vspace*{-0.5cm}	\begin{center}	\textbf{\large{The Cyclic and Modular Microcosm Principle in Quantum Topology}}\\	\vspace{1cm}	{\large Lukas Woike }\\ 	\vspace{5mm}{\slshape  Université Bourgogne Europe\\ CNRS\\ IMB UMR 5584\\ F-21000 Dijon\\ France }\end{center}	\vspace{0.3cm}	
	\begin{abstract}\noindent Monoidal categories with additional structure such as a braiding or some form of duality abound in quantum topology. They often appear in tandem with Frobenius algebras inside them. Motivations for this range from the theory of module categories to the construction of correlators in conformal field theory. We generalize the Baez-Dolan microcosm principle to consistently describe all these types of algebras by extending it to cyclic and modular algebras in the sense of Getzler-Kapranov. Our main result links the microcosm principle for cyclic algebras to the one for modular algebras via Costello's modular envelope. The result can be understood as a local-to-global construction or an integration procedure for various flavors of Frobenius algebras that substantially generalizes and unifies the available (and often intrinsically semisimple) methods using for example triangulations or classical skein theory. As the main application of this rather abstract result, we solve the problem of classifying consistent systems of correlators for open conformal field theories and show that the genus zero correlators for logarithmic conformal field theories constructed by Fuchs-Schweigert can be uniquely extended to handlebodies. This establishes a very general correspondence between full genus zero conformal field theory in dimension two and skein theory in dimension three. 	\end{abstract}

	\tableofcontents
	\normalsize

	\section{Introduction and summary}
	It is standard knowledge that a category $\cat{M}$ in which we would like to define the notion of an \emph{associative algebra} $A$ should be \emph{monoidal}. 
	Once we have a monoidal product $\otimes$ on $\cat{M}$ with monoidal unit $I\in\cat{M}$,
	we can define an associative algebra 
	 as an object $A\in \cat{M}$ equipped 
	 with a morphism $\mu : A \otimes A \to A$ called \emph{multiplication} 
	 and a morphism $\eta : I \to A$ called \emph{unit} satisfying associativity and unitality conditions with respect to the monoidal structure of $\cat{M}$; more precisely, the associators and unitors of $\otimes$ tell us \emph{how} to define associativity and unitality, respectively, in this context. 
	A monoidal category is nothing else but an associative algebra 
	in the symmetric monoidal $(2,1)$-category $\Cat$ 
	of categories, where it is crucial that an associative algebra 
	in $\Cat$ is understood \emph{up to coherent homotopy}. 
	As a consequence,
	  an algebra $A$ in $\cat{M}$ can be seen as a nested structure: $A$ is an associative
	algebra inside an associative
	algebra $\cat{M}$, but these two algebras live on different categorical levels. 
	This is by no means an accident, but rather an instance of a profound principle of categorical algebra, the \emph{Baez-Dolan microcosm principle}~\cite[Section~4.3]{baezdolanmicrocosm} that says that an algebraic object $A$ of a certain flavor can be defined in a category $\cat{M}$ that carries the same algebraic structure, but one categorical level higher.
	In other words, $\cat{M}$ is equipped with a categorified version of the same algebraic structure. 
	One calls $A$ the \emph{microcosm} and $\cat{M}$ the \emph{macrocosm}. 
	 An algebraic object `of a certain flavor' 
	  refers in the context of \cite{baezdolanmicrocosm} to an algebraic structure that can be described by an \emph{operad}, an algebraic gadget describing structures featuring operations with multiple inputs and one output.
	  The above-mentioned notion of an algebra in a monoidal category is then obtained by applying the microcosm principle to the associative operad $\As$: An associative
	  algebra $A$ in a monoidal category $\cat{M}$ is an $\As$-algebra inside a homotopy coherent $\As$-algebra in $\Cat$.

	Getzler and Kapranov introduced two important types of operads with extra structure, namely \emph{cyclic operads}~\cite{gk},
	which allow us to cyclically permute the inputs with the output, and \emph{modular operads}~\cite{gkmod}, which come additionally with a self-composition of operations. A dense but self-contained introduction to cyclic and modular operads that needs basically no prior knowledge of ordinary operads is included in Section~\ref{seccycmodop}. 
	In this article, we extend the microcosm principle to cyclic and modular operads and derive, as the main result (Theorem~\ref{exttheorem}), a relation between the cyclic and the modular microcosm principle that has far-reaching applications in quantum algebra and quantum topology.
	
	In more detail, we take as a starting point the notion of a 
	cyclic or modular operad in a symmetric monoidal bicategory $\cat{S}$, where these notions are always understood up to coherent homotopy. 
	This notion comes out of Costello's definition of cyclic and modular operads~\cite{costello} using graph categories;
	in~\cite{cyclic}, this is adapted to the bicategorical case. Relative to this framework, the cyclic and modular microcosm principle is set up~(Section~\ref{seccyclicmodmicrocosmprinciple}):
	Let $\cat{O}$ be a category-valued cyclic or modular operad 
	and $\cat{A}$ a cyclic or modular $\cat{O}$-algebra 
	inside a symmetric monoidal bicategory $\cat{S}$; as usual, everything is defined up to coherent homotopy.
	For specificity, let us think of $\cat{S}$ as a symmetric monoidal bicategory of (finite) linear categories and left exact functors	 with the Deligne product $\boxtimes$ as in~\cite{etingofostrik,egno}, see \cite[Section~2]{fss} for an condensed introduction.
	The data of a cyclic or modular $\cat{O}$-algebra on $\cat{A}$ includes a non-degenerate symmetric pairing $\kappa : \cat{A}\boxtimes\cat{A}\to I$.
	We now consider a \emph{self-dual object} $X$ in $\cat{A}$, i.e.\ an object in $\cat{A}$ equipped with a non-degenerate symmetric pairing with respect to the pairing of $\cat{A}$ (Definition~\ref{defnondeg2pairing}). Then by inserting $X$ into the algebra $\cat{A}$ we obtain a flat vector bundle $\vb_X^\cat{A}$ over a moduli space of the operations of $\cat{O}$ (the detailed statement is Proposition~\ref{propinsertionbundle}). More precisely, for any operation $o$ of total arity $n$, the algebra $\cat{A}$ gives us a left exact functor $\cat{A}_o: \cat{A}^{\boxtimes n}\to \vect$ to 
	the category $\vect$ of finite-dimensional vector spaces over $k$.
	  The fiber of the flat vector bundle $\vb_X^\cat{A}$
	over 
	 $o$ 
	 is the vector space $\cat{A}_o(X,\dots,X)$, where $X$ is inserted into all $n$ slots of $\cat{A}_o$. A cyclic or modular $\cat{O}$-algebra structure on $X$ inside $\cat{A}$ is defined as a parallel section $\xi$
	  of this flat vector bundle;
	 it consists therefore of 
	 vectors
	 \begin{align}
	 	\xi_o  \in \cat{A}_o(X,\dots,X)\quad \text{for all operations $o$.} 
	 	 \label{eqnvectorsofthesection}
	 	\end{align}
	Setting up this framework is tedious, but straightforward. 
	
	We then put the cyclic microcosm principle to the test: Cyclic associative algebras in the symmetric monoidal bicategory $\Lexf$ of finite categories in the sense of \cite{etingofostrik,egno}, left exact functors and linear natural transformations are thanks to \cite[Theorem~4.12]{cyclic} equivalent to pivotal Grothendieck-Verdier categories in $\Lexf$ as defined by Boyarchenko-Drinfeld in~\cite{bd}. These are monoidal categories $\cat{A}$ with a weak, not necessarily rigid type of duality $D:\cat{A}\to\cat{A}^\op$, the \emph{Grothendieck-Verdier duality},
	 whose square comes with a monoidal trivialization to the identity. We give the full definition in Section~\ref{seccycas}, but already mention here that, if the Grothendieck-Verdier duality is rigid and if the category has a simple monoidal unit, one obtains exactly the pivotal finite tensor categories in the sense of~\cite{etingofostrik,egno}.
	 By calculating the cyclic associative algebras inside such a $\Lexf$-valued cyclic associative algebra,
	 we find --- as we should! --- an algebraic structure 
	 that in the rigid case specializes to the notion of a symmetric Frobenius algebra, 
	 see e.g.~\cite{fuchsstigner}, and that, beyond the rigid case, recovers the recently introduced notion of a symmetric Frobenius algebra in a pivotal Grothendieck-Verdier category from~\cite[Section~4.2]{fsswfrobenius}, see Example~\ref{examplesymfrob} for sources of symmetric Frobenius algebras.
	 
	 From a categorical and operadic point of view, it is certainly satisfying to \emph{prove} what the  notion of symmetric Frobenius algebra --- according to the microcosm principle --- needs to be, and it is worth noting  that,
	  even in the rigid case, it requires seeing the rigid duality as a Grothendieck-Verdier duality. Nonetheless, one could argue that one can just guess the notion by making a reasonable ansatz that one then just turns into a definition. Hence, the main question is what one gains from the proof that this notion of symmetric Frobenius algebra inside a pivotal Grothendieck-Verdier category is compliant with the cyclic microcosm principle.
	  Indeed, this article focuses on the algebro-topological ramifications of the interplay between the cyclic and the modular microcosm principle: 
	 Costello~\cite{costello} defines for a $\Cat$-valued cyclic operad $\cat{O}$ a modular operad $\envo$ by freely completing $\cat{O}$ to a modular operad in a homotopy coherent way. The construction is 
	  adapted to the bicategorical situation in~\cite[Section~7.1]{cyclic}. The operad $\envo$ is called the \emph{modular envelope} of $\cat{O}$.
	 By taking the nerve $B$, geometric realization $|-|$ and fundamental groupoid $\Pi$, we obtain a $\Grpd$-valued operad $\Pi |B\envo|$ (the operad obtained by localizing $\envo$ at all morphisms). 
	 A cyclic $\cat{O}$-algebra in a symmetric monoidal bicategory $\cat{S}$
	 extends uniquely to a modular $\Pi |B\envo|$-algebra in $\cat{S}$ 
	 that we call the \emph{modular extension}  $\widehat{\cat{A}}$ of $\cat{A}$, see \cite[Proposition~7.1]{cyclic} and~\cite[Theorem~4.2]{mwansular}.
	 Section~\ref{secmodularextension} gives a concise summary of these previous results.
	Let us now state the central result relating the cyclic and the modular
	version of the microcosm principle:

		\begin{reptheorem}{exttheorem}[Microcosmic version of modular extension]
		For a $\Cat$-valued cyclic	 operad $\cat{O}$ and a
		cyclic  $\cat{O}$-algebra $\cat{A}$ in a symmetric monoidal bicategory $\cat{S}$, there is a canonical pair of inverse equivalences
		\begin{equation}
			\begin{tikzcd}
				\catf{CycAlg}(\cat{O};\cat{A}) \ar[rrrr, shift left=2,"\text{extension}"] &&\simeq&& \ar[llll, shift left=2,"\text{restriction}"] 
				\catf{ModAlg}\left( \Pi|B\envo|; \widehat{\cat{A}}\right) 
			\end{tikzcd}
		\end{equation} 
		between cyclic $\cat{O}$-algebras in $\cat{A}$ and modular $\Pi|B\envo|$-algebras in
		the modular extension $\widehat{\cat{A}}$.
	\end{reptheorem}

The proof of this result is given in Section~\ref{secmodularextension}
once the framework for two layers of cyclic and modular algebras is set up correctly, 
with the suitable notion of non-degenerate symmetric pairing relative to a higher categorical non-degenerate symmetric pairing.
The main contribution of this article is to derive results from several special cases of Theorem~\ref{exttheorem} relevant in quantum topology. These are cases 
 in which all of the quantities appearing in the result can actually be calculated. 
More precisely, Theorem~\ref{exttheorem}
 will be used for concrete applications in the following way:
\begin{itemize}
	\item Choose for $\cat{O}$ a cyclic operad relevant in quantum algebra or topology, such as the associative operad or the framed $E_2$-operad. These operads are `small' in the sense that we have well-known small presentations in terms of generators and relations. The inclusion of the cyclic structure into this presentation is a little more subtle, but doable using~\cite[Section~3]{cyclic}.
	This will then allow us to describe $\catf{CycAlg}(\cat{O};\cat{A})$ explicitly through short lists of generating algebraic operations and their relations.
	
	\item Calculate (or rather: look up) $\Pi|B\envo|$. For the above-mentioned examples of $\cat{O}$, this will produce an interesting operad whose modular algebras are hard to calculate directly. 
	We then use Theorem~\ref{exttheorem} to describe $\catf{ModAlg}\left( \Pi|B\envo|; \widehat{\cat{A}}\right)$ via $\catf{CycAlg}(\cat{O};\cat{A})$.
	\end{itemize}

\subsection*{Correlators for open conformal field theories via `integration' of Frobenius algebras over surfaces}	
Theorem~\ref{exttheorem} can be used to classify correlators for open conformal field theories
 in purely algebraic terms.
 Since this article is mostly written from the perspective of algebra and topology,
 the mathematical physics background will just serve as an extremely important motivation, but no knowledge of conformal field theory (other than the very little background given in the text) will be needed.
 A modern introduction to the notion of a correlator in conformal field theory is~\cite[Section~4.2]{frs25} or the recent  encyclopedia entry~\cite{algcften}. 
 For an open conformal field theory,
 the monodromy data is described by a modular algebra over the 
 \emph{open surface operad} $\O$ 
 (built from compact oriented surfaces with at least one boundary component per connected component and a collection of boundary intervals embedded in their boundary, see Section~\ref{secopen}); in the context of this article, it takes values in $\Lexf$. A modular $\O$-algebra 
 is called a  \emph{categorified open topological field theory} in~\cite{envas}. This is the open version of what is known as a \emph{modular functor}~\cite{Segal,ms89,turaev,tillmann,baki} and describes for us the monodromy data of an open conformal field theory,
 see in addition the  texts~\cite{Lazaroiu,mooresegal,laudapfeiffer} on open(-closed) field theory.
 The reader should be warned that not all definitions of modular functors that abound in the literature agree.
 In this text, an open modular functor is by definition a modular algebra over the open surface operad following~\cite{envas}; it thereby fits into the framework set up for modular functors in~\cite{brochierwoike} and 
  open-closed modular functors in~\cite{sn}.
 In more detail, an open modular functor $\omf$
 assigns to a surface $\Sigma$ with $n$ intervals embedded in its boundary labeled with objects $X_1,\dots,X_n$ in the underlying category $\cat{A}\in\Lexf$ of the modular algebra a vector space  $\omf(\Sigma;X_1,\dots,X_n)$, called \emph{space of conformal blocks}, carrying a representation of the mapping class group $\Map(\Sigma)$
 of $\Sigma$; schematically:
 \begin{equation}
 	\begin{array}{c}			\begin{tikzpicture}[scale=0.5]
 			\begin{pgfonlayer}{nodelayer}
 				\node [style=none] (0) at (-14.25, 11.25) {};
 				\node [style=none] (1) at (-14.25, 9.25) {};
 				\node [style=none] (2) at (-14, 4) {};
 				\node [style=none] (3) at (-14, 2) {};
 				\node [style=none] (4) at (-3, 5.5) {};
 				\node [style=none] (5) at (-11.25, 3.75) {};
 				\node [style=none] (6) at (-9.25, 3.75) {};
 				\node [style=none] (7) at (-10.75, 3) {};
 				\node [style=none] (8) at (-9.75, 3) {};
 				\node [style=none] (9) at (-6.25, 6) {};
 				\node [style=none] (10) at (-4.25, 6) {};
 				\node [style=none] (11) at (-5.75, 5.25) {};
 				\node [style=none] (12) at (-4.75, 5.25) {};
 				\node [style=none] (13) at (-9.75, 7) {};
 				\node [style=none] (14) at (-7.75, 7) {};
 				\node [style=none] (15) at (-9.25, 6.25) {};
 				\node [style=none] (16) at (-8.25, 6.25) {};
 				\node [style=none] (17) at (-12, 8) {};
 				\node [style=none] (18) at (-12, 6) {};
 				\node [style=none] (19) at (-13.5, 9.75) {};
 				\node [style=none] (20) at (-13.5, 10.75) {};
 				\node [style=none] (21) at (-14.75, 2.5) {};
 				\node [style=none] (22) at (-14.75, 3.5) {};
 				\node [style=none] (23) at (-13.25, 3.75) {};
 				\node [style=none] (24) at (-13.25, 2.25) {};
 				\node [style=none] (25) at (-8, 4.25) {};
 				\node [style=none] (26) at (-6, 4.25) {};
 				\node [style=none] (27) at (-7.5, 3.5) {};
 				\node [style=none] (28) at (-6.5, 3.5) {};
 				\node [style=none] (29) at (-12.25, 3) {$X_3$};
 				\node [style=none] (30) at (-15.5, 3) {$X_2$};
 				\node [style=none] (31) at (-12.75, 10.25) {$X_1$};
 				\node [style=none] (32) at (-8.75, 8.25) {$\Sigma$};
 			\end{pgfonlayer}
 			\begin{pgfonlayer}{edgelayer}
 				\draw [bend right=90, looseness=1.50] (1.center) to (0.center);
 				\draw [bend right=270, looseness=1.50] (1.center) to (0.center);
 				\draw [bend left=90, looseness=1.75] (2.center) to (3.center);
 				\draw [bend right=90, looseness=1.50] (2.center) to (3.center);
 				\draw [in=90, out=0, looseness=0.50] (0.center) to (4.center);
 				\draw [in=-90, out=0, looseness=0.75] (3.center) to (4.center);
 				\draw [bend right=90, looseness=1.50] (5.center) to (6.center);
 				\draw [bend left=45, looseness=1.25] (7.center) to (8.center);
 				\draw [bend right=90, looseness=1.50] (9.center) to (10.center);
 				\draw [bend left=45, looseness=1.25] (11.center) to (12.center);
 				\draw [bend right=90, looseness=1.50] (13.center) to (14.center);
 				\draw [bend left=45, looseness=1.25] (15.center) to (16.center);
 				\draw [bend right=90, looseness=1.50] (18.center) to (17.center);
 				\draw [bend right=270, looseness=1.50] (18.center) to (17.center);
 				\draw [style=open, bend right] (19.center) to (20.center);
 				\draw [style=open, bend right] (22.center) to (21.center);
 				\draw [style=open, bend left=45] (23.center) to (24.center);
 				\draw [in=0, out=0, looseness=2.75] (1.center) to (17.center);
 				\draw [in=0, out=0, looseness=1.50] (18.center) to (2.center);
 				\draw [bend right=90, looseness=1.50] (25.center) to (26.center);
 				\draw [bend left=45, looseness=1.25] (27.center) to (28.center);
 			\end{pgfonlayer}
 		\end{tikzpicture}
 	\end{array} \quad \mapsto \quad \omf(\Sigma;X_1,X_2,X_3) \curvearrowleft \Map(\Sigma)
 \end{equation}
 Here the parametrized boundary intervals are printed in blue. Some boundary components may have no parametrized boundary intervals; they are called \emph{free boundary circles}.
 A \emph{consistent system of correlators} for the open modular functor is a modular $\O$-algebra with coefficients in $\omf$. In other words, we use the modular microcosm principle for the definition.
 With the description~\eqref{eqnvectorsofthesection} as a parallel section, this means that such a system of correlators amounts to a self-dual object $F\in \cat{A}$ and vectors
 \begin{align}
 	\xi_\Sigma^F \in \omf(\Sigma;F,\dots,F)    \label{eqncormicrocosm}
 \end{align}	
 in the spaces of conformal blocks $\omf(\Sigma;F,\dots,F)$ for the boundary label $F$ (also called \emph{boundary field})
 that are fixed by the mapping class group action and compatible with the gluing along boundary intervals (one then says that they solve the \emph{sewing constraints}).

 It should be made clear that this does not 
 change or re-invent existing notions of correlators: This definition is in line with the definitions e.g.\ in \cite{ffrsunique,jfcs}, even though we restrict to the open case here and allow more general monodromy data. In the rigid case, the fact that the Frobenius structures describing correlators should be interpreted as a shadow of the duality on $\cat{A}$ under the microcosm principle is briefly commented on in~\cite[Section~4.2]{jfcslog}.

  Equipped with the full description~\eqref{eqncormicrocosm} 
  of correlators through the 
  microcosm principle using the 
  language of cyclic and modular operads 
  at two categorical levels, 
  we can now profit from this new approach.
This will be achieved 
by using the available description of open conformal field theories through
\cite{envas} that builds on
 the results of~\cite{giansiracusa} that in turn 
  rely on the ribbon graph description for moduli spaces of surfaces~\cite{harer86,penner,kontsevichintersection,kon94,costello,costellographs,costellotcft}, see also~\cite{wahlwesterland,egaskupers}.
More precisely,	
we apply Theorem~\ref{exttheorem} to the associative operad and a cyclic associative algebra in $\Lexf$, i.e.\ a pivotal Grothendieck-Verdier category $\cat{A}$. We denote its modular extension  to the modular operad $\Pi|B\Envint\! \As|$ by $\Ao$
(we do not use the notation $\widehat{\cat{A}}$ because we reserve it for the modular extension of ribbon Grothendieck-Verdier categories and want to avoid confusion).
The modular operad 
$\Pi|B\Envint\! \As|$
  is equivalent \cite{giansiracusa,envas}, to the groupoid-valued
	 modular open surface operad $\O$. Therefore, $\Ao$ is an open modular functor, and all open modular functors are of this form~\cite{envas}.
	 
	Theorem~\ref{exttheorem} tells us that any cyclic associative algebra $F$ in $\cat{A}$ (which, as we had seen, is a symmetric Frobenius algebra) extends to a modular algebra $\Fo$ over the open surface operad inside $\Ao$. Therefore, it gives us a consistent system of correlators for the open conformal field theory with monodromy data $\cat{A}$, and  all such systems are of this form. The precise statement is the following:

		\begin{reptheorem}{thmopencorrelators}[Classification of open correlators]
			Given an open modular functor $\omf$ in $\Lexf$, let $\cat{A}$ be the pivotal Grothendieck-Verdier category obtained by evaluation of $\omf$
		on disks with intervals 
		embedded
		in its boundary.
		Then the consistent systems of
		open correlators for $\omf$ are exactly symmetric Frobenius algebras in $\cat{A}$.
		More explicitly,
		the open correlator $\xi^F$
		associated
		to 	a symmetric Frobenius algebra  $F$ in $\cat{A}$
		amounts to
		vectors $\xi_\Sigma^F \in \omf(\Sigma;F,\dots,F)$ in the spaces of conformal blocks associated
		to  surfaces $\Sigma \in \O(n)$ with $F$ appearing $n$ times as the boundary label. 
		These vectors are mapping class group invariant and solve the constraints for sewing along intervals.
	\end{reptheorem}

In the rigid and semisimple case,	the classification
	 of consistent systems of correlators through different flavors of Frobenius algebras in the tensor categories describing the monodromy data is developed in detail in the works~\cite{frs1,frs2,frs3,frs4,ffrs}. 
	  More precisely, the correlators for an open-closed conformal field theory whose monodromy data is described by a modular fusion category are produced via the three-dimensional Reshetikhin-Turaev topological field theory~\cite{rt1,rt2,turaev}. 
	 The fact that one may obtain from a consistent system of open correlators a symmetric Frobenius algebra (one direction of Theorem~\ref{thmopencorrelators})
	 can be deduced 
	 from the results in~\cite{ffrsunique}
	 if $\cat{A}$ is a pivotal fusion category. 
	 Some considerations in the non-semisimple rigid case are made in~\cite{jfcsfincft}.
	 The converse and the extension beyond the semisimple case in full generality seems to be new. 
	 In Section~\ref{seccalc}, we spell out the construction explicitly and discuss briefly the Hopf-algebraic special case.
	 It should also be noted that Theorem~\ref{thmopencorrelators} generalizes, albeit implicitly, the \emph{open} part of the string-net construction of correlators for rational conformal field theories~\cite{rcftsn} beyond the rational case (Remark~\ref{remsn}).

	\subsection*{Ansular correlators}
	In Section~\ref{secansular} we apply the cyclic and the modular microcosm principle to the cyclic framed $E_2$-operad (the cyclic operad of oriented genus zero surfaces). By \cite[Theorem~5.13]{cyclic} cyclic framed $E_2$-algebras in $\Lexf$ are \emph{ribbon Grothendieck-Verdier categories} in $\Lexf$ in the sense of~\cite{bd}, i.e.\ braided monoidal categories $(\cat{A},\otimes,c)$ in $\Lexf$
	with a natural automorphism $\theta_X : X \to X$, called \emph{balancing}, satisfying $\theta_I=\id_I$, $\theta_{X\otimes Y}=c_{Y,X}c_{X,Y}(\theta_X\otimes \theta_Y)$ and $D\theta_X = \theta_{DX}$ for all $X,Y\in \cat{A}$. Cyclic framed $E_2$-algebras in a ribbon Grothendieck-Verdier category are symmetric Frobenius algebras in $\cat{A}$ that are also braided commutative (Proposition~\ref{propribbonfrob}, see Example~\ref{examplesymcomfa} for sources of such algebraic objects). After modular extension, any ribbon Grothendieck-Verdier category $\cat{A}$
	extends to a modular $\Lexf$-valued algebra over the modular operad $\Hbdy$
	 of three-dimensional compact oriented handlebodies~\cite{mwansular}, a so-called \emph{ansular functor} $\widehat{\cat{A}}$
	that provides for us a consistent system of representations $\widehat{\cat{A}}(H;X_1,\dots,X_n)$ of mapping class groups of handlebodies $H$, with labels $X_1,\dots,X_n$ in $\cat{A}$ attached to disks embedded in the boundary surface of $H$. The mapping class groups of handlebodies are the so-called \emph{handlebody groups}, see~\cite{henselprimer} for an introduction. The vector spaces $\widehat{\cat{A}}(H;X_1,\dots,X_n)$ are again referred to as spaces of conformal blocks for $\cat{A}$.
	 A modular handlebody algebra relative to an ansular functor selects invariant vectors in these representations  that are compatible with gluing. In other words, this structure is a handlebody version of the concept of a correlator; for this reason, we refer to it as \emph{ansular correlator}.  With Theorem~\ref{exttheorem}, these can be classified:
	
	\begin{reptheorem}{thmansularcor}[Classification of ansular correlators]
	The consistent systems of
	ansular correlators for an ansular
	functor are exactly symmetric commutative
	Frobenius algebras $F\in\cat{A}$
	in the ribbon Grothendieck-Verdier category $\cat{A}$ obtained from the 
	ansular functor by genus zero restriction.
	More explicitly,
	the ansular correlator $\xi^F$
	associated
	to 	a symmetric commutative Frobenius algebra $F$ in $\cat{A}$
	amounts to
	vectors $\xi_H^F \in \widehat{\cat{A}}(H;F,\dots,F)$ in the spaces of conformal blocks associated
	to handlebodies $H$
	that are handlebody group invariant and solve the constraints for sewing along disks embedded in the boundary surface of the handlebodies.
	\end{reptheorem}

Since ansular correlators are exactly genus zero correlators (Remark~\ref{remgenuszerocorrelators}), one may read
Theorem~\ref{thmansularcor} as a correspondence between full 	genus zero two-dimensional conformal field theory and a three-dimensional version of skein theory. We expand  on this viewpoint in~Remark~\ref{rem2d3d}.

Let us turn to the rigid case and
suppose that the ribbon Grothendieck-Verdier category $\cat{A}$
is actually a finite ribbon category, i.e.\ in particular rigid. Then symmetric commutative Frobenius algebras in $\cat{A}$ are exactly the consistent systems of genus zero correlators from~\cite[Proposition~4.7]{jfcs}. Theorem~\ref{thmansularcor}  tells us that these genus zero correlators extend uniquely to handlebodies and therefore have a substantially larger symmetry group.	
The application to the monoidal unit as symmetric commutative algebra in a ribbon category yields:

	\begin{repcorollary}{corpointing}
		Let $\cat{A}$ be a finite ribbon category.
	Then the spaces of conformal blocks $\widehat{\cat{A}}(H)$ of the ansular functor for $\cat{A}$ evaluated on 
	handlebodies $H$ without embedded disks come
	with a distinguished 
	$\Map(H)$-invariant non-zero
	vector $\xi_H \in \widehat{\cat{A}}(H)$.  
\end{repcorollary}
The vectors $\xi_H$ can be seen as a non-semisimple analogue of the empty skein (Remark~\ref{rememptyskein}).
Phrased geometrically, Corollary~\ref{corpointing} says that the flat vector bundle $H \mapsto \cat{\cat{A}}(H)$ over the moduli space of handlebodies has a trivial line bundle that sits canonically inside.

In the case that the ribbon category is \emph{unimodular}, i.e.\ its distinguished invertible object~\cite{eno-d} $\alpha\in \cat{A}$ controlling the quadruple dual via $-^{\vee\vee\vee\vee}\cong \alpha \otimes - \otimes \alpha^{-1}$ is isomorphic to the monoidal unit, we prove the following result:\label{defunimodular}

	\begin{reptheorem}{thmnotrrep}
	Let $\cat{A}$ be a unimodular finite ribbon category.
	If $\cat{A}\neq \vect$ and $H$ is a handlebody without embedded disks and genus $g\ge 1$,
	then the $\Map(H)$-representation  $\widehat{\cat{A}}(H)$ is not irreducible.
\end{reptheorem}

This tells us that the handlebody group representation on spaces of conformal blocks behave fundamentally differently from the corresponding representations for mapping class group of surfaces: They are, with some exceptions in trivial cases, not irreducible, while for the quantum representations of mapping class groups of surfaces, there are criteria~\cite{andersenfjelstad} for the irreducibility.  Irreducible examples are known, such as those coming from modular fusion categories of Ising type~\cite{JLLSW}.
The proofs of Corollary~\ref{corpointing} and Theorem~\ref{thmnotrrep}
use multiplicative structures on spaces of conformal blocks
built in Section~\ref{secmult} that 
partially extend work of Juhász~\cite[Section~5.2]{juhasz}.

As a closing comment for this introduction,
let us summarize the rationale behind Theorems~\ref{thmopencorrelators} and~\ref{thmansularcor}:
	The study of spaces of conformal blocks belongs to the realm of \emph{chiral} conformal field theory while the construction of consistent systems of correlators allows to pass to \emph{full} conformal field theory, see e.g.\ the overview in the introduction of~\cite{algcften} for further reading.
This means that Theorems~\ref{thmopencorrelators} and~\ref{thmansularcor} are instances of the following correspondence:
\begin{align}\left\{
	\begin{array}{rcl}	\text{macrocosm} & \longleftrightarrow &\text{chiral conformal theory of a certain flavor} \\ &&  \text{(open, closed, genus zero, open-closed, ansular, \dots),}  \\
		\text{microcosm} &  \longleftrightarrow& \text{full conformal field theory of the same flavor.}\end{array}\right\}\label{eqncorrespondences}
\end{align}
As already mentioned, some of the correspondences~\eqref{eqncorrespondences} are implicit in the articles~\cite{frs1,frs2,frs3,frs4,ffrs,ffrsunique,jfcs,jfcslog}, even though they are not conceptualized in the language of cyclic and modular operads.
The key point of this article lies in the treatment of the macrocosmic and the microcosmic level on the \emph{same algebraic footing}, meaning that they are both described by the same cyclic or modular operad, but on different categorical levels.
This inevitably needs Grothendieck-Verdier dualities for the description of the cyclic structure with respect to which the Frobenius algebras need to be considered.

\vspace*{0.2cm}\textsc{Structure of the article.}
The article is split into two parts: \begin{enumerate}
	\item[(A)] Set-up of the operadic framework for the cyclic and modular microcosm principle,
without any particular emphasis on applications, except for the last section in which we discuss the cyclic associative case in detail.
Readers who are interested in categorical foundations for cyclic and modular operads or  Grothendieck-Verdier duality may want to read just part A as a short independent article. 
\item[(B)] Specialization of the general results to operads relevant in quantum topology, with an eye towards applications in conformal field theory. This part depends logically on part~A because  of the necessary formalization of field-theoretic notions through the microcosm principle. Nonetheless, after reading the introduction, one should still be able to start just with part~B. The operadic intricacies are often hidden in the proofs, so one might just take note of the results and then selectively circle back to part~A for technical details.  
\end{enumerate}

		\vspace*{0.2cm}\textsc{Acknowledgments.} 	I would like to thank
		Christoph Schweigert for extremely valuable discussions and explanations regarding correlators, helpful comments on Example~\ref{exmult} and for sharing with me an early draft of~\cite{fsswfrobenius}. The algebraic foundations for Frobenius structures in Grothendieck-Verdier categories developed therein have allowed to write a substantial part of this article in a much more concise way.
		Moreover, I would like to thank Adrien Brochier, Aaron Hofer,
		Lukas Müller, Yang Yang and Deniz Yeral for  helpful discussions and comments related to this project, and Ingo Runkel for helpful remarks with respect to Example~\ref{exmult}.
		Finally, I would like to thank the anonymous referee whose comments have helped improve the manuscript.
	LW gratefully acknowledges support
	by the ANR project CPJ n°ANR-22-CPJ1-0001-01 at the Institut de Mathématiques de Bourgogne (IMB).
	The IMB receives support from the EIPHI Graduate School (contract ANR-17-EURE-0002).
\needspace{10\baselineskip}
\part{The operadic framework}
	\section{Reminder on cyclic and modular operads\label{seccycmodop}}
	Operads were defined by
	 Boardman-Vogt and May~\cite{bv68,mayoperad,bv73}	 to describe algebraic 
	 structures featuring operations with multiple inputs and one output.
	For an operad $\cat{O}$ with values in a (higher) symmetric monoidal category $\cat{S}$ and  $n\ge 0$, we denote by $\cat{O}(n)\in\cat{S}$ 
	the object of $n$-ary operations, i.e.\ operations with $n$ inputs and one output. 
	Through the permutation of the $n$ inputs, the permutation group on $n$ letters acts on $\cat{O}(n)$.
	There is also an operadic composition $\circ_i : \cat{O}(n) \otimes \cat{O}(m) \to \cat{O}(n+m-1)$ inserting an operation of arity $m$ into the $i$-th slot of an $n$-ary operation, where $1\le i\le n$, and an operadic unit in $\cat{O}(1)$ acting neutrally with respect to operadic composition.
	We will not discuss the compatibility of all this data and instead refer e.g.\ to~\cite[Chapters~1-3]{FresseI}. 
	
	An operad that will be of particular relevance in this article is the operad $\framed$ of \emph{framed little disks} whose space $\framed(n)$ of arity $n$ operations is given by the space of embeddings of $n$ two-dimensional disks into a disk that are composed of translations, rescalings and rotations. This operad can be defined for disks of any dimension and is an example of an operad that largely motivated the invention of operads in~\cite{bv68,mayoperad,bv73}.
	Each space $\framed(n)$ is aspherical; more precisely, it is the classifying space of the framed braid group on $n$ strands. This implies that we may see $\framed$ as a groupoid-valued operad. This statement and an	 explicit model is discussed in~\cite{WahlThesis,salvatorewahl}.
	
	The $\framed$-operad is an example of a \emph{cyclic operad} in the sense of 
	Getzler and Kapranov~\cite{gk}. This means that it comes with additional structure that allows us to permute the inputs with the output.
	Seen differently, this means that the distinction between inputs and the output is erased in a consistent way.
	The cyclic operad $\framed$-operad is equivalent to the
	operad of oriented genus zero surfaces.
	
When considering surfaces of all genus, we see that another operadic structure is present, namely a self-composition of operations that glues two boundary components of a connected surface together. This gives us on the surface operad the structure of a \emph{modular operad} in the sense of~\cite{gkmod}.
For the surface operad, we will content ourselves throughout with its groupoid-version because we will restrict  later to bicategorical algebras. The surface operad $\Surf$ has as its groupoid $\Surf(n)$ of arity $n$ operations the groupoid of surfaces (in this article, these are always compact, oriented, smooth) that are connected and have $n+1$ parametrized boundary components, with the morphisms being isotopy classes of diffeomorphisms preserving the orientation and the boundary parametrizations. Phrased differently, morphisms are mapping classes between surfaces, see \cite{farbmargalit} for an introduction to mapping class groups.
	 The gluing of surfaces gives us the operadic composition.

	We will now turn to a precise description
	of cyclic and modular operads, in the context that is needed for this article.
	To this end, we will use the description from~\cite{costello}
	based on categories of graphs.
	A graph is a set $H$ of \emph{half edges}, a set $V$ of \emph{vertices}, a source map $s : H \to V$ sending a half edge to the vertex that it is attached to, and a $\mathbb{Z}_2$-action on $H$ sending a half edge to the half edge that it is glued to. One calls the $\mathbb{Z}_2$-orbits \emph{edges}. An orbit with two elements is called \emph{internal edge}; an orbit with one element is called \emph{(external) leg}. One defines now the category $\Graphs$: Objects are finite disjoint unions of corollas, i.e.\ graphs with one vertex and a finite, possibly empty set of legs glued to it. The morphisms $T\to T'$ are equivalence classes of finite graphs $\Gamma$ (graphs that have finitely many vertices and half edges) with an identification $\alpha_1:T\cong \nu(\Gamma)$ of $T$ with the graph $\nu(\Gamma)$ obtained by cutting open all internal edges of $\Gamma$ and an identification $\alpha_2:T'\cong\pi_0(\Gamma)$ between $T'$ and the graphs constructed by contracting all internal edges of $\Gamma$. Here an identification between graphs is a bijection between the sets of half edges and vertices commuting with the source maps and the $\mathbb{Z}_2$-actions. 
	Two such triples 
	$(\Gamma,\alpha_1,\alpha_2)$ and
	$(\Gamma',\alpha_1',\alpha_2')$ are considered equivalent if there is an identification $T\cong T'$ compatible with $\alpha_1,\alpha_2,\alpha_1',\alpha_2'$.
	We refer to \cite{costello} for the definition of the composition
	(roughly, it is given by the insertion of graphs into each other).
	Disjoint union gives us a symmetric monoidal structure.

  Inside $\Graphs$, there is the symmetric monoidal subcategory $\Forests$ with the same objects, but with only those morphisms coming from graphs that are forests, i.e.\ graphs all of whose connected components are contractible. Now a \emph{cyclic operad} with values in a symmetric monoidal bicategory $\cat{S}$ is defined as a symmetric monoidal functor $\cat{O}:\Forests \to \cat{S}$ while a \emph{modular operad} with values in $\cat{S}$ is defined as a symmetric monoidal functor $\cat{O}:\Graphs\to\cat{S}$.
  Here we understand `symmetric monoidal functor between symmetric monoidal bicategories' ($\Graphs$ is a 1-category, but can be seen as bicategory) always in the weak sense, i.e.\ up to coherent homotopy, see \cite[Section~2]{schommerpries} for the appropriate framework.
  The details of bicategorical operads up to coherent homotopy are developed in~\cite[Section~2.1]{cyclic}.

	The connection to the more traditional description is as follows: Let $\cat{O}$ be an $\cat{S}$-valued cyclic or modular operad and $T$ a corolla with $n\ge 0$ legs. Then $\cat{O}(T)\in\cat{S}$ is the object of operations of arity $n-1$ (which means $n-1$ inputs and one output; we speak in this case of \emph{total arity} $n$). The permutation  action on the $n$ legs of $T$ induces a homotopy coherent 
	action of the permutation group on $n$ letters on $\cat{O}(T)$.
	Since modular operads, as opposed to cyclic ones,
	 are defined on $\Graphs$, we can evaluate on non-contractible morphisms; this gives us exactly the self-composition of operations.
	
	Note that neither for cyclic nor modular operads, the distinction between inputs and outputs is made; in order to make such a choice, one would have to introduce for each corolla a preferred leg called \emph{root}. This gives us the category $\RForests$ of \emph{rooted forests}. A \emph{non-cyclic operad} is a symmetric monoidal functor $\RForests \to \cat{S}$.

	With the definition of cyclic and modular operads given so far, operadic identities (operations of total arity two behaving neutrally with respect to composition) are not yet included, but they can be defined directly \cite[Definition~2.3]{cyclic}, and we assume throughout the article that operads (non-cyclic, cyclic or modular) come with operadic identities.

	\spaceplease
		\section{Flat vector bundles over moduli spaces:	a  description using modular operads}
		For any cyclic or modular operad, we will define in this section
		the notion of a flat vector bundle over the `moduli space of operations of the operad'.
		Basically, this amounts to setting up a categorical or rather operadic version of the Riemann-Hilbert correspondence to bridge between the description of spaces of conformal blocks in~\cite{cyclic,mwansular,brochierwoike}, the description of correlators in~\cite{jfcs} and the microcosm principle~\cite{baezdolan}.
		
	To this end, recall
	 e.g.\ from	\cite[Section I.5]{maclanemoerdijk}
	that for a functor $F:\cat{C}\to\Cat$ the 
	\emph{Grothendieck construction}
	is the category
	$\int F$
	of pairs $(c,x)$ with $c\in \cat{C}$ and $x \in F(c)$. A morphism $(c,x)\to (c',x')$ is a pair $(f,\alpha)$ of a morphism $f:c \to c'$	and a morphism $\alpha :  F(f)x \to x'$. 
	We will also apply the Grothendieck construction
	to \emph{homotopy coherent functors} $F:\cat{C}\to\Cat$ (here $\cat{C}$ is still a 1-category, but seen as a bicategory; $\Cat$ is seen as a bicategory). Note that, in that case, their coherence data will enter into the composition of $\int F$.

	\begin{definition}\label{defopoverop}
		Let $\cat{O}:\Graphs \to \Cat$ be a modular operad. Denote by $\int \cat{O} \in \Cat$ the Grothendieck construction of the symmetric monoidal functor $\cat{O}:\Graphs \to\Cat$ equipped with the symmetric monoidal structure induced by the symmetric monoidal structure of $\Graphs$ and $\cat{O}$.
		We define an \emph{operad over $\cat{O}$} or, for short, \emph{$\cat{O}$-operad with values in a symmetric monoidal category $\cat{K}$}  as a symmetric monoidal functor $\int\cat{O}\to\cat{K}$. 
	\end{definition}
	
	This definition is made analogously for cyclic and ordinary operads instead of modular ones by replacing $\Graphs$ with $\Forests$ and $\RForests$, respectively.

	\begin{definition}[The trivial operad over a modular operad]\label{deftrivialop}
		For a $\Cat$-valued modular operad $\cat{O}$ (or a cyclic or ordinary operad) and any 
		symmetric monoidal category $\cat{K}$, we define the \emph{trivial $\cat{O}$-operad with values in the symmetric monoidal category $\cat{K}$}
		as the symmetric monoidal functor 
		\begin{align}
		\star : \int\cat{O}\to \cat{K}
		\end{align} sending a pair $(T,o)$ of a corolla $T$ and an operation $o\in \cat{O}(T)$
		to the monoidal unit $I\in\cat{K}$ and any morphism to the identity of $I$.
	\end{definition}
	
	Formally, $\star$ depends on $\cat{O}$ and $\cat{K}$, but we suppress this in the notation because it will always be clear from the context. 
	
	In the sequel, we denote by $\vect$ the symmetric monoidal category of finite-dimensional vector spaces over our fixed ground field $k$.

	\begin{definition}[Flat vector bundle over an operad] \label{defflatvectorbundle}
		
		For a $\Cat$-valued modular operad $\cat{O}$, we
		define a \emph{flat vector bundle over $\cat{O}$} as a 
		$\vect$-valued $\cat{O}$-operad, i.e.\ a symmetric monoidal functor $\int \cat{O}\to\vect$.
		If all values are one-dimensional vector spaces, we call the flat vector bundle over $\cat{O}$ a \emph{flat line bundle over $\cat{O}$}.
		The flat line bundle $\int \cat{O}\to\vect$ that is the trivial $\cat{O}$-operad $\star$
		in the sense of Definition~\ref{deftrivialop} will be referred to as the \emph{trivial line bundle over $\cat{O}$} and for brevity just denoted by $k$ (the ground field $k$ is the monoidal unit of $\vect$).
	\end{definition}

	\begin{remark}[Vector bundles over the moduli space of  surfaces or handlebodies]
		Seeing $\vect$-valued functors on groupoids (or even more generally categories), such the ones in Definition~\ref{defflatvectorbundle}, as flat vector bundles is rather a question of perspective and also language; it is the viewpoint taken e.g.\ in~\cite{willerton}. 
		Let us explain why Definition~\ref{defflatvectorbundle} describes really a flat vector bundle in the traditional sense over a moduli space built from the operations of $\cat{O}$, with additional structure taking the gluing operations into account. 
		To illustrate this,
		we choose for $\cat{O}$ the
		 modular operad $\Surf$ of surfaces (see Section~\ref{seccycmodop}).
		Then 
		by	Definition~\ref{defopoverop}
		a flat vector bundle over $\Surf$ is a symmetric monoidal functor $V:\int \Surf \to\vect$. Consider a connected surface $\Sigma_{g,n}$ of genus $g$ with $n\ge 0$ boundary components. Then $\Sigma_{g,n} \in \Surf(T_{n-1})$, where $T_{n-1}$ is the corolla with $n$ legs. 
		The object $(T_{n-1},\Sigma_{g,n})$ is sent by $V$ to a vector space $V(  T_{n-1},\Sigma_{g,n}   )$. 
		A mapping class $f : \Sigma_{g,n}\to\Sigma_{g,n}$ acts by a linear automorphism of $V(  T_{n-1},\Sigma_{g,n}   )$. By functoriality this endows $V(  T_{n-1},\Sigma_{g,n}   )$ with an action of $\Map(\Sigma)$. 
		The mapping class group $\Map(\Sigma)$ is exactly the fundamental group of a suitable moduli space $\cat{M}_{g,n}$ of  surfaces of genus $g$ with $n$ boundary components (or rather moduli stack), see e.g.~the introduction of \cite{egaskupers} for this classical fact, and also~\cite[Chapter~6]{baki}. Therefore, it  follows from the Riemann-Hilbert correspondence 
		that the $\Map(\Sigma)$-action on $V(  T_{n-1},\Sigma_{g,n}   )$ amounts precisely to a flat vector bundle on $\cat{M}_{g,n}$ with typical fiber $V(  T_{n-1},\Sigma_{g,n}   )$ that we denote simply by $V_{g,n}$.  The $\vect$-valued $\Surf$-operad $V$ provides such a flat vector bundle for each $g$ and each $n$, but it actually comes with gluing operations as well: Let $\Gamma : T \to T'$ be an arbitrary morphism in $\Graphs$; we can without loss of generality assume that $T'$ is a corolla (because $V$ is symmetric monoidal). We decompose $T$ into corollas via $T=\sqcup_{j\in J} T^{(j)}$, where $J$ is a finite set. With $n_j := |\Legs(T^{(j)})|$,
		 an operation in $\Surf(T)$ is a family of surfaces $\Sigma_{g_j,n_j}$,
		  with $g_j$ denoting the genus and $n_j$ the number of boundary components. 
		The functor $V$ sends $T$ and these surfaces $\Sigma_{g_j,n_j}$ to the tensor product bundle $\bigotimes_{j\in J} V_{g_j,n_j}$ over $\prod_{j\in J} \cat{M}_{g_j,n_j}$.
		The morphism $\Gamma$ prescribes an  operation applied to the surface $\sqcup_{j\in J}  \Sigma_{g_j,n_j}$ 
		that glues several pairs of boundary components together or acts by permutation on the boundary parametrizations. Let us denote the result of this operation  by $\Sigma_{g,n}$ (we know that this is a connected surface because $T'$ is a corolla; therefore, it will have  some genus $g$ and some number $n$ of boundary components).
		The gluing induces a map $\gamma : \prod_{j\in J} \cat{M}_{g_j,n_j} \to \cat{M}_{g,n}$. 
		The morphism $(T,(\Sigma_{g_j,n_j})_{j\in J}) \to (T', \Sigma_{g,n} )$
		in $\int \Surf$ is sent by $V$ to a map of flat vector bundles 
		$\bigotimes_{j\in J} V_{g_j,n_j}\to \gamma^* V_{g,n}$
		over $\prod_{j\in J} \cat{M}_{g_j,n_j}$, a \emph{gluing map} ($\gamma^*$ denotes the pullback along $\gamma$).
		By the functoriality of $V$ these gluing maps are compatible with the composition of gluings. 
		The same considerations can be made for the modular handlebody operad $\Hbdy$ instead of the surface operad.
		More generally, for any modular operad $\cat{O}$,
		this suggest that we should think of a symmetric monoidal functor $\int \cat{O}\to \vect$ as a flat vector bundle over the moduli space of operations of $\cat{O}$. 	
	\end{remark}

	\section{Self-dual objects}
	In this section, we will use 
	self-dual objects inside modular (and cyclic) algebras to construct flat vector bundles over 
	 modular (and cyclic) operads.
	This definition will need, for any symmetric monoidal bicategory $\cat{S}$, the concept of an $\cat{S}$-valued algebra
	 over a cyclic or modular operad $\cat{O}$ with values in the category $\Cat$ of categories. The detailed definition is given in 
	\cite[Section~2.2-2.4]{cyclic}. We recall here the main points: For a non-cyclic operad $\cat{O}$, an algebra structure on some object is a map from $\cat{O}$ to the endomorphism operad on that object. For cyclic or modular operads, the endomorphism operad needs to be made cyclic or modular, respectively, by means of a non-degenerate symmetric pairing on the underlying object.
	For an object $\cat{A} \in \cat{S}$ in a symmetric monoidal bicategory $\cat{S}$,
		 a \emph{non-degenerate symmetric pairing} is a map $\kappa : \cat{A}\boxtimes\cat{A}\to I$ (here $\boxtimes$ is the monoidal product of $\cat{S}$, and $I$ is the monoidal unit) that exhibits $\cat{A}$ as its own dual in the homotopy category of $\cat{S}$ (this is the non-degeneracy)
	and is equipped with the structure of a homotopy fixed point with respect to the $\mathbb{Z}_2$-action on $\kappa$ via the symmetric braiding (this is the symmetry).
	Thanks to non-degeneracy,
	 there exists an essentially unique map $\Delta : I \to \cat{A}\boxtimes \cat{A}$, the \emph{coevaluation map}, that together with $\kappa$ satisfies the zigzag identities up to isomorphism.
	This allows us to define a symmetric monoidal
	functor $ \End_\kappa^\cat{A}: \Graphs \to \Cat$
	via $\End_\kappa^\cat{A}(T):= \cat{S}( \cat{A}^{\boxtimes\Legs(T)} , I)$ for any corolla $T$, the \emph{endomorphism operad of $(\cat{A},\kappa)$}, a $\Cat$-valued modular operad that can be seen as cyclic operad if needed.
	Here $\cat{A}^{\boxtimes\Legs(T)}$ is the unordered monoidal product of $\cat{A}$ with index set $\Legs(T)$. 
	The definition of $\End_\kappa^\cat{A}$ on morphisms uses the coevaluation.
	We may now define the structure of a cyclic or modular $\cat{O}$-algebra on $\cat{A}\in\cat{S}$ as the choice of a non-degenerate symmetric pairing $\kappa$ on $\cat{A}$ and a map $\cat{O}\to\End_\kappa^\cat{A}$ of $\Cat$-valued cyclic or modular operads. 
	We note that bicategorical cyclic or modular algebras over a cyclic or modular operad form a 2-groupoid~\cite[Proposition~2.18]{cyclic}.

 Our main example for the symmetric monoidal bicategory $\cat{S}$ is the symmetric monoidal bicategory $\Lexf$ of \emph{finite linear categories} (or just \emph{finite categories}) over our algebraically closed field $k$ that we fix throughout:
 \begin{itemize}
 	\item The objects are finite $k$-linear categories~\cite{etingofostrik,egno}, i.e.\ linear abelian categories with finite-dimensional morphism spaces, finitely many isomorphism classes of simple objects, enough projective objects and finite length for every object.
 	\item The 1-morphisms are left exact functors, i.e.\ functors preserving finite limits.
 	\item The 2-morphisms are natural linear transformations.
 	\end{itemize}The monoidal product of $\Lexf$ is the Deligne product with monoidal unit $\vect$, the category of finite-dimensional vector spaces over our fixed algebraically closed field $k$.

 If $\cat{O}$ is a $\Cat$-valued modular operad, a $\Lexf$-valued modular $\cat{O}$-algebra has an underlying finite category $\cat{A} \in \Lexf$ with a non-degenerate symmetric  pairing
  $\kappa : \cat{A}\boxtimes \cat{A} \to \vect$. Since $\kappa$ is left exact, it induces an equivalence $D:\cat{A} \to \cat{A}^\op$ determined by $\kappa(X,Y)\cong\cat{A}(DX,Y)$. The equivalence $D$, also referred to as \emph{duality functor}, comes with an isomorphism $D^2 \cong \id_\cat{A}$ thanks to the symmetry of $\kappa$. 
	The coevaluation 
	$\Delta : \vect \to \cat{A}\boxtimes \cat{A}$ can be identified with an object in $\cat{A}\boxtimes\cat{A}$, namely the coend $\Delta = \int^{X \in \cat{A}} DX \boxtimes X$. For an introduction to coends in finite categories, we refer to~\cite{fss}.
For each operation $o\in \cat{O}(T)$ for a corolla $T$, the map of modular operads from $\cat{O}$ to the endomorphism operad of $(\cat{A},\kappa)$ gives us a left exact functor $\cat{A}_o :\cat{A}^{\boxtimes \Legs(T)}\to\vect$. 
We will not spell out here the compatibility with gluing and instead refer to \cite[Section~2]{cyclic}.

We will now prepare the definition of a self-dual object. First recall that, for a
		 symmetric monoidal bicategory $\cat{S}$ and $\cat{A}\in \cat{S}$,
		a \emph{(generalized) object} in $\cat{A}$ is a 1-morphism $X:I\to \cat{A}$.
		A generalized object in a finite category $\cat{A}\in\Lexf$ is a left exact functor $X:\vect \to \cat{A}$. By additivity this functor is determined by its value $X(k)$ on the ground field $k$.
		Therefore, it amounts to an object in $\cat{A}$ in the usual sense.

	\begin{definition}[Symmetric morphism]
		Let $\cat{S}$ be a symmetric monoidal bicategory.
		For any object $\cat{A} \in \cat{S}$, a symmetric 1-morphism $F: I \to \cat{A}\ot  \cat{A}$, or
		\emph{symmetric map} for short, is a 1-morphism in $\cat{S}$ 
		(it is therefore an object in $\cat{A}\ot  \cat{A}$)
		equipped with the structure of a homotopy fixed point with respect to the $\mathbb{Z}_2$-action on $\cat{S}(I,\cat{A}\ot \cat{A})$ via the symmetric braiding of $\cat{S}$. 	A morphism of symmetric 1-morphisms is a 2-morphism that is a map of homotopy fixed points.
	\end{definition}

	\begin{example}\label{exsymstr1}
		For an object $X:I\to \cat{A}$ in $\cat{A}$, the 1-morphism $X\ot  X : I \simeq I\ot  I \to \cat{A}\ot  \cat{A}$ comes with the structure of a symmetric 1-morphism, and we will always equip it with this structure.
	\end{example}

	\begin{example}\label{exsymstr2}
		It follows from~\cite[Remark~2.11]{cyclic} that
		the coevaluation object $\Delta:I\to\cat{A}\boxtimes\cat{A}$ of a
		non-degenerate symmetric pairing 
		$\kappa : \cat{A}\boxtimes\cat{A}\to I$ on $\cat{A}$
		comes with the structure of a symmetric object in $\cat{A}\boxtimes\cat{A}$.
	\end{example}

	\begin{definition}[Symmetric 2-pairing] Let $\cat{S}$ be a symmetric monoidal bicategory.
		Let  $\cat{A} \in \cat{S}$ be an object with non-degenerate symmetric pairing $\kappa : \cat{A}\ot \cat{A} \to I$.
		A \emph{symmetric 2-pairing} on an object $X:I \to \cat{A}$ in $\cat{A}$ is a symmetric map $\beta : X \ot  X \to \Delta$, where $\Delta: I \to \cat{A}\ot \cat{A}$ is the coevaluation 
		associated to $\kappa$
		(the symmetry property of $\beta$ is with respect to the symmetric structures established on $X\ot  X$ and $\Delta$ in the Examples~\ref{exsymstr1} and~\ref{exsymstr2}, respectively). 
	\end{definition}

	\begin{definition}[Self-dual object]\label{defnondeg2pairing}Let $\cat{A}$ be an object in $\cat{S}$ with a non-degenerate symmetric pairing by $\kappa : \cat{A}\ot \cat{A}\to I$ and coevaluation $\Delta : I\to\cat{A}\ot  \cat{A}$.
	A \emph{self-dual object in $\cat{A}$} is an object $X:I\to\cat{A}$ together with a symmetric 2-pairing $\beta : X\ot  X \to \Delta$ 
	that is \emph{non-degenerate} in the sense 
	that there exists a map $\delta : \id_I \to \kappa \circ (X\ot  X)$, called \emph{symmetric 2-copairing}, 
	such that
	\begin{align}
		X \ra{\delta \ot  X} \kappa (X\ot  X) \ot  X \ra{X \ot  \beta} (\kappa \ot  \id) (X \ot  \Delta) \cong X
	\end{align}
	and
	\begin{align}
		X \ra{X \ot  \delta} X \ot  \kappa (X \ot  X) \ra{\beta \ot  X} (\id\ot  \kappa) (\Delta \ot  X) \cong X
	\end{align}
	are the identity of $X$.
\end{definition}

\begin{remark}\label{rempairingcopairing}
	If a symmetric 2-pairing is non-degenerate, then the  2-copairing, whose existence is required in the above definition, is unique. One could have equivalently dualized the definition: Then the symmetric 2-copairing would  be structure and subject to the non-degeneracy condition that an associated symmetric 2-pairing satisfying the above equations
	exists. This symmetric 2-pairing is then unique.
\end{remark}

	The main result of this section is that self-dual objects 
	produce flat vector bundles:
	
	\begin{proposition}\label{propinsertionbundle}
		Let $\cat{O}$ be a category-valued modular operad and $\cat{A}$ a modular $\cat{O}$-algebra in a symmetric monoidal 
		bicategory $\cat{S}$. Moreover, let $X:I\to \cat{A}$ be a self-dual object and
		$\beta : X  \boxtimes  X \to \Delta$ its  symmetric 2-pairing, where $\Delta$ is the coevaluation object for $\cat{A}$. 
		Then
		\begin{align}
		\vb_X^\cat{A} : \int \cat{O} \to \cat{S}(I,I) \ , \quad (T,o) \mapsto \cat{A}_o\circ X^{\ot  \Legs(T)} \quad \text{for a corolla}\ T \label{eqnrelend}
		\end{align}
		is a symmetric monoidal functor, i.e.\ an $\cat{O}$-operad with values in $\cat{S}(I,I)$. 
	\end{proposition}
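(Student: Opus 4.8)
The plan is to exhibit $\vb_X^\cat{A}$ as a composite of symmetric monoidal functors and thereby reduce the claim to facts already available. First I would observe that the modular $\cat{O}$-algebra structure on $\cat{A}$ is, by definition, a map of $\Cat$-valued modular operads $\cat{O}\to\End_\kappa^\cat{A}$, where $\End_\kappa^\cat{A}(T)=\cat{S}(\cat{A}^{\boxtimes\Legs(T)},I)$. Applying the Grothendieck construction, this yields a symmetric monoidal functor $\int\cat{O}\to\int\End_\kappa^\cat{A}$. Hence it suffices to produce a symmetric monoidal functor $\int\End_\kappa^\cat{A}\to\cat{S}(I,I)$ whose composite with the previous one is $\vb_X^\cat{A}$; concretely, on objects this functor should send a pair $(T,\varphi)$ with $\varphi:\cat{A}^{\boxtimes\Legs(T)}\to I$ to $\varphi\circ X^{\boxtimes\Legs(T)}:I\to I$, i.e.\ it should be ``precomposition with the generalized object $X$ inserted into every leg''.

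The key step is then to check that this insertion-at-every-leg operation is symmetric monoidal as a functor out of the Grothendieck construction, and here the self-duality of $X$ is exactly what is needed. On objects the assignment is the evident precomposition. On a morphism $\Gamma:T\to T'$ in $\Graphs$ — which I may assume, after decomposing into corollas and using symmetric monoidality, to contract a single internal edge — the functor $\End_\kappa^\cat{A}$ acts by inserting the coevaluation $\Delta$ along that edge; the symmetric 2-pairing $\beta:X\boxtimes X\to\Delta$ (together with the fact, from Example~\ref{exsymstr1} and Example~\ref{exsymstr2}, that $\beta$ is a map of \emph{symmetric} objects) supplies precisely the comparison 2-morphism turning ``insert $X$'s then contract'' into ``contract then the legs that survive still carry $X$''. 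The non-degeneracy of $\beta$, via the symmetric 2-copairing $\delta$ and the two zigzag identities in Definition~\ref{defnondeg2pairing}, guarantees that these comparison 2-morphisms are invertible and that the resulting structure is coherent, so that one genuinely obtains a (weak) symmetric monoidal functor rather than merely a lax one. Functoriality under composition of graphs follows from the functoriality already built into $\End_\kappa^\cat{A}$ together with the coherence of $\beta$ and $\delta$; the symmetry-group equivariance on legs is immediate because $\beta$ and $X\boxtimes X$ were chosen symmetric.

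I expect the main obstacle to be purely bookkeeping: organizing the coherence data of the homotopy-coherent symmetric monoidal functor $\int\End_\kappa^\cat{A}\to\cat{S}(I,I)$ so that the comparison cells built from $\beta$ and $\delta$ satisfy all the required pentagon/hexagon-type conditions in the symmetric monoidal bicategory $\cat{S}$. None of this is conceptually deep — each individual coherence is forced by the zigzag identities and the homotopy-fixed-point structure on $\beta$ — but assembling it cleanly is where the real work lies; following the conventions of \cite[Section~2]{cyclic} for how such insertion functors interact with the $\End_\kappa^\cat{A}$-construction should make this systematic. Once the functor $\int\End_\kappa^\cat{A}\to\cat{S}(I,I)$ is in hand, precomposing with $\int\cat{O}\to\int\End_\kappa^\cat{A}$ gives $\vb_X^\cat{A}$ on the nose — on a corolla $T$ the value is $\cat{A}_o\circ X^{\boxtimes\Legs(T)}$, as in \eqref{eqnrelend} — and symmetric monoidality is preserved under composition, completing the argument.
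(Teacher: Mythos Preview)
Your overall strategy---factor $\vb_X^\cat{A}$ as a composite of symmetric monoidal functors---is sound, but the route differs from the paper's and there is one genuine conceptual slip.

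The paper does not factor through $\int\End_\kappa^\cat{A}$. Instead it invokes the \emph{calculus construction} from \cite[Section~6]{cyclic}: there is an auxiliary category $\cat{E}_\kappa^\cat{A}$ whose objects are pairs $(T,(Z_j)_j)$ with $Z_j:I\to\cat{A}^{\boxtimes\Legs(T^{(j)})}$ (generalized objects \emph{into} powers of $\cat{A}$, dual to your setup), together with an already-established symmetric monoidal calculus functor $\Calc_\cat{A}:(\int\cat{O})\times_\Graphs\cat{E}_\kappa^\cat{A}\to\cat{S}(I,I)$. The paper builds a simple functor $K:\int\cat{O}\to\cat{E}_\kappa^\cat{A}$, $(T,o)\mapsto(T,X^{\boxtimes\Legs(T)})$, using $\beta$ on morphisms exactly as you describe, pairs it with the identity via the universal property of the pullback, and restricts $\Calc_\cat{A}$. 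The gain is that the bicategorical coherence you correctly identified as ``the real work'' is already packaged inside $\Calc_\cat{A}$; only the easy verification that $K$ is symmetric monoidal remains. Your direct construction of an insertion functor $\int\End_\kappa^\cat{A}\to\cat{S}(I,I)$ is a legitimate alternative, but you would indeed have to supply all that coherence by hand.

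The slip: your claim that non-degeneracy of $\beta$ (via $\delta$ and the zigzag identities) is what makes the comparison 2-morphisms invertible and thereby upgrades a lax structure to a strong one is mistaken. The maps built from $\beta$ constitute the functor's \emph{action on morphisms} of $\int\cat{O}$, not its monoidal structure maps; nothing requires them to be isomorphisms. The strong monoidality of $\vb_X^\cat{A}$ comes from the strong monoidality already present in the modular algebra structure (disjoint union goes to tensor product) and is unrelated to $\beta$. The paper notes this explicitly in the remark following the proof: non-degeneracy of $\beta$ is not used in the argument and is imposed only for later purposes (retaining the ability to distinguish inputs from outputs).
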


	If $\cat{S}=\Lexf$, then $\cat{S}(I,I)=\vect$, so the construction produces 
flat vector bundles over $\cat{O}$ in that case.
The functor $\vb_X^\cat{A}$
depends on $\beta$,
but this is suppressed in the notation because we see the pairing as part of
the self-dual object $X$.
	
	\begin{remark}
		An analogous statement holds for cyclic instead of modular operads.
	\end{remark}

	The proof of Proposition~\ref{propinsertionbundle} will
	make use  of the \emph{calculus construction} from \cite[Section~6]{cyclic} that we briefly recall now; for details, we refer to the original article: 
	For the non-degenerate symmetric pairing $\kappa : \cat{A} \boxtimes \cat{A}\to I$ of $\cat{A}$, there is a category $\cat{E}_\kappa^\cat{A}$ of pairs of $T=\sqcup_{j\in J} T^{(j)} \in \Graphs$ ($J$ is a finite set, and the $T^{(j)}$ are corollas) and families $(Z_j)_{j\in J}$ of 1-morphisms $Z_j : I \to \cat{A}^{\boxtimes \Legs \left( T^{(j)}\right)}$. A morphism
	\begin{align}
	(\Gamma,\alpha):	\left(     S=\sqcup_{\ell\in L} S^{(\ell)},  (Y_\ell)_{\ell \in L}   \right) \to 	\left(     T=\sqcup_{j\in J} T^{(j)},  (Z_j)_{j\in J}   \right)\label{eqngammaalpha}
	\end{align}
	is a morphism $\Gamma : S \to T$ in $\Graphs$
	and for any $j\in J$ a 2-morphism
	$\alpha : \boxtimes_{\ell \in L_j} Y_\ell \to Z_j^\Gamma$ between 1-morphisms
	$I \to \boxtimes_{\ell \in L_j}   \cat{A}^{\boxtimes \Legs    \left (S^{(\ell)}\right)}$,
	where $L_j$ is the preimage of $j$ under the map $L \to J$ induced by $\Gamma$,
	and
	$Z_j^\Gamma$ is defined via
	\begin{align}
	Z_j^\Gamma : I \ra{Z_j} \cat{A}^{\boxtimes \Legs \left(T^{(j)}\right)} 
	\ra{\text{apply $\Delta$ to edges collapsed by $\Gamma$}} \boxtimes_{\ell \in L_j}   \cat{A}^{\boxtimes \Legs    \left(S^{(\ell)}\right)}\ . 
	\end{align}
	The category $\cat{E}_\kappa^\cat{A}$ comes with a projection $\cat{E}_\kappa^\cat{A}\to \Graphs$. 
	The  pullback $\left(\int \cat{O}\right)\times_\Graphs \cat{E}_\kappa^\cat{A}$ (it is also a homotopy pullback because $\int \cat{O}\to \Graphs$ is a Grothendieck fibration) is symmetric monoidal and comes 
	with a symmetric monoidal functor
	\begin{align}
	\Calc_\cat{A}: 
	\left(\int \cat{O}\right)\times_\Graphs \cat{E}_\kappa^\cat{A}
	\to \cat{S}(I,I)\ , \label{eqncalculusfunctor}
	\end{align}
	the so-called \emph{calculus functor}
	sending 
	\begin{align}(T,O,Z)=\left(  \sqcup_{j\in J} T^{(j)}, 
	\left(o_j \in \cat{O}(T^{(j)})\right)_{j\in J}     ,
	(Z_j)_{j\in J} \right)\in \left(\int \cat{O}\right)\times_\Graphs \cat{E}_\kappa^\cat{A}\end{align}
	to
	\begin{align}
	\Calc_\cat{A}(T,O,Z)=	\bigotimes_{j\in J} \cat{A}_{o_j} \circ Z_j \ . \end{align}

	\begin{proof}[\slshape Proof of Proposition~\ref{propinsertionbundle}]
		First let us define a symmetric monoidal functor $K: \int \cat{O} \to \cat{E}_\kappa^\cat{A}$. It sends
		a corolla $T$ and $o\in\cat{O}(T)$ to $(T, X^{\boxtimes  \Legs(T)})$.
		Let now \begin{align}(\Gamma, f): \left(S=\sqcup_{\ell \in L} S^{(\ell)},p \in \cat{O}(S)\right) \to (T, o \in \cat{O}(T))\end{align}
		be a morphism in $\int \cat{O}$ (we can without loss of generality assume that $T$ is a corolla).
		We define $K(\Gamma,f)$ to be the
		morphism $\Gamma :S \to T$ in $\Graphs$
		together with a 2-morphism (to be specified) running from
		$
		\boxtimes_{\ell \in L} X^{\boxtimes \Legs \left(S^{(\ell)}\right)}$ 
		to the same object but with each copy of $X\boxtimes X$ belonging to a pair of edges collapsed by $\Gamma$ being replaced with $\Delta$. 
		Therefore, we can use $\beta : X  \ot  X \to \Delta$ to define $K$ on morphisms. It is a straightforward verification
		that $K$ is in fact a symmetric monoidal functor. 
		
		Now by the universal property of the (homotopy) pullback 
		$\left(\int \cat{O}\right)\times_\Graphs \cat{E}_\kappa^\cat{A}$ the functors
		$K: \int\cat{O}\to\cat{E}_\kappa^\cat{A}$ and $\id:\int \cat{O}\to\int\cat{O}$ 
		induce a functor $\int \cat{O}\to \left(\int \cat{O}\right)\times_\Graphs \cat{E}_\kappa^\cat{A}$ that is also symmetric monoidal.
		We restrict
		the calculus functor~\eqref{eqncalculusfunctor} along this functor and find the desired symmetric monoidal functor~\eqref{eqnrelend}.
	\end{proof}

	\begin{remark}
	The proof of Proposition~\ref{propinsertionbundle} does not need the non-degeneracy of $\beta$. We require it nonetheless because otherwise we lose the possibility to make, if needed, a distinction between inputs and outputs, see the comments after~\cite[Proposition~2.12]{cyclic}.
	\end{remark}
	
\begin{remark}
	If $\cat{O}$ is the surface operad, then $\vb_X^\cat{A}$ is a cyclic analogue of the \emph{pinned block functor}
	from~\cite[Section~3.3]{jfcs}. The crucial technical
	difference is that $\vb_X^\cat{A}$, as opposed to the pinned block functor,
	 contains also the self-duality information of $X$ --- which is of course the main point about $\vb_X^\cat{A}$.
	\end{remark}

	We finish this section by characterizing 
	self-dual objects in a finite category 
	explicitly:

	\begin{lemma}\label{lemmaselfdualinlex}
		Let $\cat{A} \in \Lexf$ be a finite category with non-degenerate symmetric pairing $\kappa$, moreover $D:\cat{A}\to \cat{A}^\op$ the equivalence defined via $\kappa(X,Y)\cong \cat{A}(DX,Y)$ for all $X,Y \in \cat{A}$ which comes with a natural isomorphism $\omega : \id_\cat{A}\to D^2$ induced by the symmetry isomorphisms of $\kappa$.
		Then the structure of a self-dual object on $X\in \cat{A}$ is an isomorphism $\psi : X \to DX$ identifying $X$ with its dual $DX$
		such that
		\begin{align}
		X \ra{\omega_X} 	D^2 X	\ra{D\psi} DX 
		\end{align} 
		coincides with $\psi$.
	\end{lemma}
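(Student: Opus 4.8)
The plan is to unwind Definition~\ref{defnondeg2pairing} in the case $\cat{S}=\Lexf$, where all the abstract data becomes concrete: a generalized object $X:\vect\to\cat{A}$ is just an object $X\in\cat{A}$, the coevaluation is the coend $\Delta=\int^{Z\in\cat{A}}DZ\boxtimes Z$, a symmetric $2$-pairing is a morphism $\beta:X\boxtimes X\to\Delta$ in $\cat{A}\boxtimes\cat{A}$ together with a homotopy-fixed-point compatibility, and the symmetric $2$-copairing $\delta$ is a $2$-morphism in $\cat{S}$, i.e.\ a natural transformation of linear endofunctors of $\vect$. The first step is to identify the underlying morphism of a $2$-pairing. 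Using the defining isomorphism $\kappa(Z,W)\cong\cat{A}(DZ,W)$, the symmetry of $\kappa$, the density formula for $\cat{A}$, and the coend description of $\Delta$, one obtains a natural isomorphism
\[
	\Hom_{\cat{A}\boxtimes\cat{A}}\!\big(X\boxtimes X,\Delta\big)\;\cong\;\cat{A}(X,DX)\ ,
\]
under which a $2$-pairing $\beta$ corresponds to a morphism $\psi:X\to DX$; equivalently, one may simply bend one leg of $\beta$ along the self-duality $(\kappa,\Delta)$ of $\cat{A}$ in $\Lexf$. In the same manner, under $\kappa(X,X)\cong\cat{A}(DX,X)$ the $2$-copairing $\delta:\id_I\to\kappa\circ(X\boxtimes X)$ is the datum of a morphism $\bar\psi:DX\to X$.

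The second step is to translate the two defining conditions. For non-degeneracy I would feed the identifications of the first step into the two composites of Definition~\ref{defnondeg2pairing} and simplify them using the zigzag identities for $(\kappa,\Delta)$; the requirement that both composites equal $\id_X$ becomes exactly $\psi\bar\psi=\id_{DX}$ and $\bar\psi\psi=\id_X$, so that $\beta$ is non-degenerate precisely when $\psi$ is an isomorphism (and then $\bar\psi=\psi^{-1}$ is forced, matching Remark~\ref{rempairingcopairing}). For symmetry I would compare the two homotopy-fixed-point structures involved: the one on $X\boxtimes X$ from Example~\ref{exsymstr1} is the evident swap, while the one on $\Delta$ from Example~\ref{exsymstr2}, after unwinding \cite[Remark~2.11]{cyclic}, is built from the symmetry isomorphism of $\kappa$ --- and this isomorphism is, under $\kappa(Z,W)\cong\cat{A}(DZ,W)$, exactly the natural isomorphism $\omega:\id_\cat{A}\to D^2$. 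Transporting both structures through the isomorphism of the first step, the condition that $\beta$ be a map of homotopy fixed points becomes the single equation $D\psi\circ\omega_X=\psi$. Combining the two translations yields the asserted description of self-dual structures on $X$, and since the identifications above are bijections on the relevant sets, this is an identification of structures, not merely a comparison.

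The step I expect to be the main obstacle is the symmetry bookkeeping in the second step: one must make the abstract symmetric structure on $\Delta$ from \cite[Remark~2.11]{cyclic} explicit on the coend $\int^{Z}DZ\boxtimes Z$ --- where it amounts to reindexing the coend along the equivalence $D$ and inserting $\omega$ --- and then check that the isomorphism $\Hom_{\cat{A}\boxtimes\cat{A}}(X\boxtimes X,\Delta)\cong\cat{A}(X,DX)$ intertwines the $\mathbb{Z}_2$-action (swap of the two tensor factors) on the left with the involution $\psi\mapsto D\psi\circ\omega_X$ on the right. The non-degeneracy translation, by contrast, is a direct diagram chase with the zigzag identities, and the passage from the abstract bicategorical data to honest morphisms in $\cat{A}$ is routine in $\Lexf$.
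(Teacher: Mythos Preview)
Your approach is correct and reaches the same conclusion, but the paper takes a shorter route. Rather than working with the $2$-pairing $\beta:X\boxtimes X\to\Delta$ (which forces you to identify morphisms into the coend $\int^Z DZ\boxtimes Z$ and then track the $\mathbb{Z}_2$-action there), the paper invokes Remark~\ref{rempairingcopairing} and describes the self-duality entirely through the $2$-copairing $\delta:k\to\kappa(X,X)$. Since $\kappa(X,X)\cong\cat{A}(DX,X)$ holds by the very definition of $D$, this immediately yields a morphism $DX\to X$; non-degeneracy makes it an isomorphism, and the symmetry condition on $\delta$ (with respect to the symmetry isomorphism of $\kappa$, which by construction encodes $\omega$) unwinds directly to the stated compatibility. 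You do in fact write down the copairing side as well, so your argument contains the paper's; but the detour through $\Hom_{\cat{A}\boxtimes\cat{A}}(X\boxtimes X,\Delta)$ and the explicit symmetric structure on the coend is unnecessary work --- everything is already visible on the copairing side with no coend manipulation at all.
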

	
	\begin{proof}
		If we describe the self-duality of $X$ via a symmetric 2-copairing $k\to \kappa(X,X)\cong \cat{A}(DX,X)$, see Remark~\ref{rempairingcopairing}, we observe that it gives us exactly a morphism $DX \to X$. The non-degeneracy entails that this morphism is an isomorphism. The symmetry amounts exactly to the remaining condition mentioned in the Lemma.
	\end{proof}

	\section{The cyclic and modular microcosm principle\label{seccyclicmodmicrocosmprinciple}}
	
The \emph{microcosm principle} developed by Baez-Dolan~\cite[Section~4.3]{baezdolanmicrocosm}
	says, roughly, that in order to be able to define an algebra $A$ over an operad $\cat{O}$ \emph{inside} a category $\cat{A}$, the category $\cat{A}$ needs to be itself an algebra over $\cat{O}$. In this situation, $\cat{A}$ is called the \emph{macrocosm} in which the \emph{microcosm} $A$ lives.
	For instance, in the case $\cat{O}=\As$, this tells us that in order to define what an associative algebra $A$ in a category $\cat{A}$ should be, we need to equip $\cat{A}$ with a monoidal structure. Then we can define what it means for $A \in \cat{A}$ to be an associative algebra relative to the monoidal structure of $\cat{A}$.

	The following precise definition of the microcosm principle builds on the original definition of Baez-Dolan, but includes cyclic and modular operads into the picture using the framework developed for the latter in~\cite{costello,cyclic} and, of course, self-dual objects and their flat vector bundles. Recall that in order to define categorical algebras over cyclic and modular operads, we needed a non-degenerate symmetric pairing to make the endomorphism operad cyclic. In other words, the macrocosm needs a non-degenerate symmetric pairing. It will not come as surprise that also on the microcosmic level we will need a non-degenerate symmetric pairing relative to the macrocosmic pairing.

	\begin{definition}[Modular $\cat{O}$-algebra with coefficients --- Modular Microcosm Principle]\label{defmodalgcoeff}
		Let $\cat{O}$ be a $\Cat$-valued modular operad and $\cat{A}$ an $\cat{S}$-valued modular $\cat{O}$-algebra.
		A \emph{modular $\cat{O}$-algebra (with coefficients) in $\cat{A}$ (or relative to $\cat{A}$)} 
		is \begin{itemize}
			\item a self-dual object $X:I\to \cat{A}$ in $\cat{A}$ (Definition~\ref{defnondeg2pairing}), 
			\item together with a monoidal transformation
			\begin{equation}
				\begin{tikzcd}
					& &  \  \ar[Rightarrow,shorten <= 0.01cm, shorten >= 0.01cm]{dd}{\xi} & & \\  \    
				\int	\mathcal{O}   \ar[bend left=40]{rrrr}{\star} \ar[bend left=-40,swap]{rrrr}{\vb_X^\cat{A} \ \text{from Proposition~\ref{propinsertionbundle}}} & &          & &  \cat{S}(I,I)
					\\ 
					& & \ & &
				\end{tikzcd} 
			\end{equation}
		\end{itemize}
		satisfying the \emph{unitality condition} that the component \begin{align}
			\xi_{(T_1,1_\cat{O})} : \star \to \vb_X^\cat{A} (1_\cat{O}) = \cat{A}_{1_\cat{O}} \circ (X\ot  X) \cong \kappa \circ (X\ot  X)\end{align} selects the 2-copairing of the self-dual object;
		here $1_\cat{O}$ is the operadic identity, and the isomorphism $\cat{A}_{1_\cat{O}}\cong \kappa$ is part of the data of $\cat{A}$~\cite[Definition~2.13]{cyclic}.
	\end{definition}

\begin{remark}
	In this formulation of the microcosm principle, the modular algebra with coefficients in a higher categorical modular algebra is defined as a monoidal transformation from $\star$ (for appropriate choices of the underlying operad to be understood as the `trivial field theory') to a symmetric monoidal functor built from the coefficients. 
	Similar concepts appear sometimes under the name of \emph{twisted} or \emph{relative field theory}~\cite{stolzteichner,freedteleman,johnsonfreydscheimbauer}. It is also an abstraction of the definition of correlators in~\cite{jfcs}, as we explain in~Section~\ref{secopen}.
	\end{remark}
	
	\begin{remark}\label{remgrpd}
		Modular algebras with fixed coefficients naturally form a category:
		A morphism $(X,\xi) \to (Y,\xi')$ between modular $\cat{O}$-algebras with coefficients in $\cat{A}$ is a 2-morphism $f:X \to Y$ between the 1-morphisms $X,Y : I \to \cat{A}$ in $\cat{S}$ such that:
		\begin{itemize}
			
			\item The 2-pairing $\beta :X \ot  X \to \Delta$ of $X$ and the 2-pairing $\gamma : Y \ot  Y \to \Delta$ of $Y$ satisfy $\beta = \gamma \circ (f\ot  f)$.
			
			\item The monoidal transformation $f_* : \vb_X^\cat{A}\to\vb_Y^\cat{A}$ induced by $f$ satisfies $f_*\circ  \xi = \xi'$.
		\end{itemize}
		We denote the category of modular $\cat{O}$-algebras in $\cat{A}$ by $\catf{ModAlg}(\cat{O};\cat{A})$.
		As one would expect,
		 any morphism between self-dual objects that is compatible with the respective non-degenerate 2-pairings is invertible. This is essentially the argument from \cite[Proposition~2.18]{cyclic}, but one categorical level lower.
		Hence, the category
		$\catf{ModAlg}(\cat{O};\cat{A})$ is a groupoid.
	\end{remark}
	
	\begin{remark}\label{remgenopcomp}
		For any $o\in \cat{O}(T)$, any morphism $\Gamma : T \to T'$ in $\Graphs$
		induces a morphism $(T,o)\to (T',\cat{O}(\Gamma)o)$ in $\int \cat{O}$ that we just denote by $\Gamma$ again.
		For an $\cat{O}$-algebra $\xi:\star \to \vb_X^\cat{A}$ with coefficients in $\cat{A}$, the naturality of $\xi$ implies that the diagram
		\begin{equation}
			\begin{tikzcd}
				\star   \ar[]{rr}{\xi_{(T,o)}} \ar[swap]{rrdd}{\xi_{(T',\cat{O}(\Gamma)o)}} & &   \cat{A}_o \circ X^{\ot  \Legs(T)}  \ar{dd}{     \vb_X^\cat{A}(\Gamma)  }    \\
				\\
				& &  
				\cat{A}_{(T',\cat{O}(\Gamma)o)} \circ X^{\ot  \Legs(T')}  
			\end{tikzcd} 
		\end{equation}
		on $\cat{S}(I,I)$ commutes.
		This implies that $\xi$ is already determined by those components $\xi_{(T,o)}$ on operations $o\in\cat{O}(T)$ that generate the operations of $\cat{O}$ under operadic composition. In fact, we can further reduce this to the case where $T$ is not an arbitrary object in $\Graphs$, but a corolla. This is because of the monoidality of $\xi$. 
	\end{remark}
	
	\begin{remark}[Cyclic microcosm principle]\label{remcyclicalg}
		Definition~\ref{defmodalgcoeff} has a cyclic analogue. We then get the notion 
		of a cyclic algebra with coefficients in a cyclic algebra over a cyclic operad via the \emph{cyclic microcosm principle}. 
		Remark~\ref{remgrpd} and~\ref{remgenopcomp} remain correct after some straightforward changes.
	\end{remark}	
	
	\begin{remark}
		We should mention that there is a connection between cyclic operads and the microcosm principle that is entirely different from the objective of this text and covered in~\cite{Obradovic}. Let us explain this: Operads themselves can be defined as monoids with respect to Day convolution \emph{inside} a certain symmetric monoidal category, so they can be defined through the microcosm principle. In~\cite{Obradovic} this description is extended to cyclic operads. This is \emph{not} our objective: We fix a cyclic or modular operad $\cat{O}$, while being agnostic to \emph{how} the notion of cyclic or modular operad is defined (whether this is again an instance of the microcosm principle is irrelevant to us), and a cyclic or modular algebra $\cat{A}$ over $\cat{O}$. We then consider cyclic or modular $\cat{O}$-algebras \emph{inside} $\cat{A}$. 
	\end{remark}

	\section{Microcosmic modular extension\label{secmodularextension}}
	In the subsequent sections,
	our goal will be to apply the modular microcosm principle in relevant cases. This means that, for a given modular operad $\cat{O}$  of interest and a modular algebra $\cat{A}$ over it,
	we would like to classify all modular $\cat{O}$-algebras in $\cat{A}$. 
	This is a rather non-trivial task for which we need to establish an important technical result in this section.
	
	First recall that for a $\Cat$-valued cyclic operad $\cat{O}:\Forests \to \Cat$, one can define following \cite{costello} the \emph{modular envelope} of $\cat{O}$, a modular operad $\envo:\Graphs \to \Cat$ sending $T\in \Graphs$ to the Grothendieck construction
	\begin{align}
		\envo(T):= \int \left( \Forests / T \to \Forests \ra{\cat{O}} \Cat   \right) \ , 
		\end{align} 
	where $\Forests / T$ is the slice category of the inclusion $\Forests \to \Graphs$ over $T$, and $\Forests / T \to \Forests$ is the forgetful functor.
	Intuitively, $\envo$ is the modular operad obtained by completing the cyclic operad $\cat{O}$, in a homotopically correct way, to a modular operad.
	The version of the modular envelope used here, while still largely following \cite{costello}, is slightly adapted to $\Cat$-valued operads and their bicategorical algebras, see \cite[Section~7.1]{cyclic}. 
	In particular, the construction is made such that a cyclic $\cat{O}$-algebra gives rise to a modular $\envo$-algebra $\widehat{\cat{A}}$ that is called the \emph{modular extension} of $\cat{A}$. 
On an operation $\left(\Gamma : \widetilde T = \sqcup_{\ell \in L} \widetilde T^{(\ell)} \to T , o \in \cat{O}\left(\widetilde T\right)\right)$, it is given by image of $\cat{A}_o \in \prod_{\ell\in L} \cat{S}\left(\cat{A}^{  \boxtimes \Legs \left( \widetilde T^{(\ell)}   \right)    },I\right)$ under
the functor
\begin{align}
	\prod_{\ell\in L} \cat{S}\left(\cat{A}^{  \boxtimes \Legs \left( \widetilde T^{(\ell)}   \right)    },I\right) \ra{\End_\kappa^\cat{A}(\Gamma)}
	\cat{S} \left(\cat{A}^{\boxtimes \Legs(T)} ,I\right) \end{align}
	obtained by the evaluation of the endomorphism operad for $\cat{A}$ on $\Gamma$.

	The following is the main technical result of this paper:

	\begin{theorem}[Microcosmic version of modular extension]\label{exttheorem}
		For a $\Cat$-valued cyclic	 operad $\cat{O}$ and a
		cyclic  $\cat{O}$-algebra $\cat{A}$ in a symmetric monoidal bicategory $\cat{S}$, there is a canonical pair of inverse equivalences
		\begin{equation}
			\begin{tikzcd}
				\catf{CycAlg}(\cat{O};\cat{A}) \ar[rrrr, shift left=2,"\text{extension}"] &&\simeq&& \ar[llll, shift left=2,"\text{restriction}"] 
				\catf{ModAlg}\left( \Pi|B\envo|; \widehat{\cat{A}}\right) 
			\end{tikzcd}
		\end{equation} 
		between cyclic $\cat{O}$-algebras in $\cat{A}$ and modular $\Pi|B\envo|$-algebras in
		the modular extension $\widehat{\cat{A}}$.
	\end{theorem}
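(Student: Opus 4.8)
The plan is to treat both sides of the claimed equivalence as groupoids of pairs $(X,\xi)$ in which $X$ is a self-dual object and $\xi$ a parallel section, and to exploit that the self-dual datum is literally the \emph{same} on both sides. Indeed, by construction the modular extension restricts, along the canonical comparison $\cat{O}\to\envo|_\Forests$, to the original cyclic algebra $\cat{A}$; hence the underlying object of $\cat{S}$ of $\widehat{\cat{A}}$ together with its non-degenerate symmetric pairing $\kappa$, and therefore also the coevaluation $\Delta$, agree with those of $\cat{A}$. Consequently a self-dual object is the same piece of data in $\cat{A}$ as in $\widehat{\cat{A}}$ (compare Lemma~\ref{lemmaselfdualinlex} in the $\Lexf$-situation). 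Fixing such an $X$, the theorem reduces to the claim that precomposition along the canonical inclusion $\int\cat{O}\hookrightarrow\int\envo$ induces an equivalence between parallel sections $\star\Rightarrow\vb_X^{\widehat{\cat{A}}}$ over $\int\Pi|B\envo|$ and parallel sections $\star\Rightarrow\vb_X^{\cat{A}}$ over $\int\cat{O}$, compatibly with the unitality condition of Definition~\ref{defmodalgcoeff} and naturally in $X$; morphisms of algebras are handled by the identical argument carried out one dimension lower, as in Remark~\ref{remgrpd}, so I would concentrate on objects.

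The restriction functor is the easy direction. The modular envelope carries a canonical comparison morphism of operads $\iota\colon\cat{O}\to(\Pi|B\envo|)|_\Forests$ (the unit of the free–restriction adjunction relating cyclic and modular operads, postcomposed with the localization $\envo\to\Pi|B\envo|$); it induces a symmetric monoidal functor $\int\cat{O}\to\int\Pi|B\envo|$ sending $(S,o)$ to $(S,(\id_S,o))$. Since the underlying cyclic algebra of $\widehat{\cat{A}}$ is $\cat{A}$, one has $\widehat{\cat{A}}_{(\id_S,o)}\cong\cat{A}_o$ compatibly with the structure maps, so this functor pulls $\vb_X^{\widehat{\cat{A}}}$ back to $\vb_X^{\cat{A}}$; precomposition therefore sends a parallel section of $\vb_X^{\widehat{\cat{A}}}$ to one of $\vb_X^{\cat{A}}$, and it preserves unitality because $\iota$ sends the operadic identity to the operadic identity.

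For the extension functor, given a parallel section $\xi=(\xi_o)_o$ of $\vb_X^{\cat{A}}$, I would define its extension $\widehat{\xi}$ on a general operation $\bigl(\Gamma\colon\widetilde{T}\to T,\ o\in\cat{O}(\widetilde{T})\bigr)$ of $\envo$ by $\widehat{\xi}_{(\Gamma,o)}:=\vb_X^{\widehat{\cat{A}}}(\Gamma)\bigl(\widehat{\xi}_{(\id_{\widetilde T},o)}\bigr)$, using the morphism $\Gamma$ of $\int\envo$ from $(\widetilde T,(\id_{\widetilde T},o))$ to $(T,(\Gamma,o))$ (Remark~\ref{remgenopcomp}) and the identification $\widehat{\xi}_{(\id_{\widetilde T},o)}=\xi_o$ from the previous paragraph. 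Concretely, $\vb_X^{\widehat{\cat{A}}}(\Gamma)$ inserts the coevaluation $\Delta$ at the leg-pairs of $\widetilde{T}$ that $\Gamma$ glues and mediates between $X^{\ot\Legs(\widetilde T)}$ and the glued configuration $\cat{A}_o$ by means of the symmetric $2$-pairing $\beta\colon X\ot X\to\Delta$, precisely along the functor $K\colon\int\cat{O}\to\cat{E}_\kappa^\cat{A}$ of the proof of Proposition~\ref{propinsertionbundle}. What must be verified is that this prescription is a well-defined \emph{monoidal} transformation $\star\Rightarrow\vb_X^{\widehat{\cat{A}}}$, i.e.\ that it is independent of the way an operation of $\envo$ is exhibited as an iterated gluing of operations of $\cat{O}$, and compatible with disjoint union. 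The cleanest way to see this is to observe that the calculus functor $\Calc_{\widehat{\cat{A}}}$ of \cite[Section~6]{cyclic} is, coherently, the modular-envelope extension of $\Calc_{\cat{A}}$: restricting along $\bigl(K,\id\bigr)$ as in the proof of Proposition~\ref{propinsertionbundle} and using the explicit description of $\widehat{\cat{A}}$ on operations $(\Gamma,o)$ recalled before the theorem, a parallel section of $\vb_X^{\cat{A}}$ is exactly the data needed to produce one of $\vb_X^{\widehat{\cat{A}}}$, with the consistency built into the single functor $\Calc_{\widehat{\cat{A}}}$. That $\widehat{\xi}$ descends to $\int\Pi|B\envo|$ is automatic since $\vb_X^{\widehat{\cat{A}}}$ does, and unitality is inherited because $\widehat{\xi}$ agrees with $\xi$ on the operadic identity.

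Finally I would check that restriction and extension are mutually inverse. Restriction after extension recovers $\xi$ on the nose, as it only evaluates $\widehat{\xi}$ on operations with $\Gamma=\id$. Extension after restriction recovers a given parallel section $\eta$ of $\vb_X^{\widehat{\cat{A}}}$ because, by Remark~\ref{remgenopcomp}, $\eta$ is determined by its components on a set of operations generating $\envo$ under operadic (self-)composition, and by the defining property of the modular envelope the operations of $\cat{O}$ — those of the form $(\id,o)$ — form such a generating set; so $\eta$ is determined by its restriction, which is what extension reconstructs. The same bookkeeping at the level of generating operations yields compatibility with morphisms of algebras, and every choice made ($\iota$, the functor $K$, the calculus functor) is canonical, so both assignments are canonical functors. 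The main obstacle is the coherence claim above — that $\Calc_{\widehat{\cat{A}}}$ is the modular-envelope extension of $\Calc_{\cat{A}}$ as symmetric monoidal functors \emph{up to coherent homotopy}, not merely on objects — which is precisely why the argument should be routed through the calculus machinery of \cite[Section~6]{cyclic} rather than a hands-on gluing construction; the non-degeneracy of $\beta$, equivalently the zigzag identities for $X$, is what makes $\vb_X^{\widehat{\cat{A}}}(\Gamma)$ behave well and is used crucially there.
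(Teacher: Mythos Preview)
Your proof is correct and follows essentially the same route as the paper: both identify self-dual objects in $\cat{A}$ and $\widehat{\cat{A}}$, define restriction via the symmetric monoidal functor $J\colon\int\cat{O}\to\int\envo$, $(T,o)\mapsto(T,\id_T,o)$, define extension by the forced formula $\widehat{\xi}_{(\Gamma,o)}=\vb_X^{\widehat{\cat{A}}}(\Gamma)\bigl(\bigotimes_\ell\xi_{o_\ell}\bigr)$, and conclude mutual inverseness from the fact that operations of the form $(\id,o)$ generate $\envo$ (Remark~\ref{remgenopcomp}). One small correction: you assert that the non-degeneracy of $\beta$ is used crucially in making $\vb_X^{\widehat{\cat{A}}}(\Gamma)$ behave well, but in fact the construction of $\vb_X^\cat{A}$ in Proposition~\ref{propinsertionbundle} (and hence the extension argument) does not need non-degeneracy at all---the paper remarks on this explicitly after that proposition; non-degeneracy is imposed only to retain the input/output distinction elsewhere.
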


Here $B:\Cat \to \sSet$ is the nerve, $|-|:\sSet \to \catf{Top}$ the geometric realization and $\Pi : \catf{Top}\to\Grpd$ the fundamental groupoid.
	
	\begin{remark}\label{remwithoutloc}
		The modular extension $\widehat{\cat{A}}$ of $\cat{A}$ is a modular $\envo$-algebra sending all morphisms in the categories of operations to isomorphisms, see \cite[Remark~7.2]{cyclic}. Therefore, it can be seen as a modular algebra over $\Pi|B\envo|$ (the localization of $\envo$ at all morphisms in the categories of operations). 
		This however does not influence the notion
		 of a modular algebra with coefficients in $\widehat{\cat{A}}$.
		In other words, $\catf{ModAlg}\left( \Pi|B\envo|; \widehat{\cat{A}}\right)=\catf{ModAlg}\left( \envo; \widehat{\cat{A}}\right)$, so that we can rephrase Theorem~\ref{exttheorem} equivalently as an equivalence
		\begin{equation}\label{eqnequivwithoutloc}
			\begin{tikzcd}
				\catf{CycAlg}(\cat{O} ;\cat{A}) \ar[rrrr, shift left=2,"\text{extension}"] &&\simeq&& \ar[llll, shift left=2,"\text{restriction}"] 
				\catf{ModAlg}\left( \envo; \widehat{\cat{A}}\right) \ . 
			\end{tikzcd}
	\end{equation}\end{remark}

	\begin{proof}[\slshape Proof of Theorem~\ref{exttheorem}]
		By Remark~\ref{remwithoutloc} we can equivalently prove the equivalence~\eqref{eqnequivwithoutloc}.

		We start by explaining why there is a restriction functor $\catf{ModAlg}( \envo; \widehat{\cat{A}}) \to \catf{CycAlg}(\cat{O};\cat{A})$:
		First we observe that being a self-dual object $X: I \to \cat{A}$ in $\cat{A}$ is the same as being a self-dual object $X:I\to \widehat{\cat{A}}$ in $\widehat{\cat{A}}$
		because $\cat{A}$ and its modular extension $\widehat{\cat{A}}$ have the same underlying object in $\cat{S}$ and the same non-degenerate symmetric pairing. 
		Moreover, there is a symmetric monoidal functor $J: \int \cat{O} \to \int \envo$ sending $(T,o)$ with $o\in \cat{O}(T)$ to $(T,\id_T,o) \in \int \envo$. Clearly, for the flat vector bundles from
		Proposition~\ref{propinsertionbundle} \begin{align}\label{eqnendJ}\vb_X^{\widehat{\cat{A}}} \circ J=\vb_X^\cat{A}  \ . \end{align} 
		The restriction functor \begin{align}
	\label{eqnJrestriction}	J^*:	\catf{ModAlg}\left( \envo; \widehat{\cat{A}}\right) \to
			\catf{CycAlg}(\cat{O} ;\cat{A}) \end{align}
			takes an object $\xi \in \catf{ModAlg}( \envo; \widehat{\cat{A}})$, i.e.\ a monoidal transformation $\xi : \star \to \vb_X^{\widehat{\cat{A}}}$ for some self-dual object $X$ in $\widehat{\cat{A}}$, 
		and restricts along $J$. Since $\star \circ J=\star$ and \eqref{eqnendJ}, this produces a monoidal transformation $\star \to \vb_X^\cat{A}$. After the straightforward verification that this restriction preserves the unitality condition from Definition~\ref{defmodalgcoeff}, we obtain an object in $\catf{CycAlg}(\cat{O};\cat{A})$. This establishes the restriction functor~\eqref{eqnJrestriction}.
		
		Now suppose that we are given a cyclic algebra $\xi : \star \to \vb_X^\cat{A}$ with coefficients in $\cat{A}$.
		An operation in $\envo(T')$, where $T'$ is without loss of generality a corolla, is a morphism $\Gamma : T \to T'$ for some $T\in \Graphs$ and an operation
		$o\in \cat{O}(T)$. 
		We decompose $T$ into corollas, $T = \sqcup_{\ell \in L} T^{(\ell)}$, so that $o$ corresponds to a family $(o_\ell)_{\ell \in L} \in \prod_{\ell \in L} \cat{O}\left(T^{(\ell)}\right) \simeq \cat{O}(T)$ of operations. 
		The graph $\Gamma$ provides a morphism $\Gamma : (T,\id_T,o)\to (T',\Gamma,o)$ in $\int \envo$, again denoted by $\Gamma$ by slight abuse of notation.
		We now define $\widehat \xi : \star \to \vb_X^{\widehat{\cat{A}}}$ as the transformation whose component at $(\Gamma,o)$ is given by the commutativity of the diagram
		\begin{equation}\label{trianglemicroeqn}
			\begin{tikzcd}
				\star   \ar[]{rrrr}{\bigotimes_{\ell \in L} \xi_{o_\ell}} \ar[swap]{rrrrdd}{\widehat \xi_{(\Gamma,o)} } & &&& \bigotimes_{\ell \in L} \cat{A}_{o_\ell} 
				\circ X^{\boxtimes     \Legs T^{(\ell)}    }   =  \bigotimes_{\ell\in L} \widehat{\cat{A}}_{(\id_{T^{(\ell)}},o_\ell)}
				\circ X^{\boxtimes     \Legs T^{(\ell)}    } 
				\ar{dd}{     \vb_X^{\widehat{\cat{A}}}(\Gamma)  }    \\
				\\
				& &  &&
				\widehat{\cat{A}}_{(\Gamma,o)} \circ X^{\ot  \Legs(T)} \ . 
			\end{tikzcd} 
		\end{equation}
		This makes $\widehat \xi$ natural, and we extend it monoidally. Moreover, $\widehat \xi$ clearly extends $\xi$, i.e.\ $J^* \widehat \xi = \xi$. In fact, any extension of $\xi$ to a monoidal natural transformation
		$\widehat \xi : \star \to \vb_X^{\widehat{\cat{A}}}$ must make the  triangle~\eqref{trianglemicroeqn} commute as follows from Remark~\ref{remgenopcomp}; in other words, $\widehat \xi$ is the unique extension.
		If we now define an extension functor $E: \catf{CycAlg}(\cat{O};\cat{A}) \to \catf{ModAlg}( \envo; \widehat{\cat{A}}) $ by $E \xi := \widehat \xi$ (the definition clearly extends to morphisms),
		then 
		$J^* \circ E \cong \id_{ \catf{CycAlg}(\cat{O};\cat{A}) }$ because $E$ provides, as just discussed, an extension, and $E \circ J^* \cong \id_{   \catf{ModAlg}( \envo; \widehat{\cat{A}}) }$ because this extension is unique.
	\end{proof}

	\section{Cyclic associative algebras\label{seccycas}}
	In this section, we apply the cyclic microcosm principle to the associative operad, 
	the cyclic $\Set$-valued operad $\As$ sending a corolla $T$ to the set of cyclic orders on $\Legs(T)$.
	If $T$ has $n+1$ legs, $\As(T)$ can be non-canonically identified with the permutation group on $n$ letters.
	Of course, we can see $\As$ as $\Cat$-valued cyclic operad.
	The reader will not see any surprises in this section because cyclic associative algebras will have to amount to symmetric Frobenius algebras, with the right definition of symmetric Frobenius algebra that we will give below in Definition~\ref{defsymFA}. In that regard, this section is rather a sanity check for the cyclic microcosm principle set up above.

	By~\cite[Theorem~4.12]{cyclic} cyclic $\As$-algebras in $\Lexf$ are pivotal Grothendieck-Verdier categories in $\Lexf$, as defined by Boyarchenko-Drinfeld in~\cite{bd}, using the notion of $\star$-autonomous categories~\cite{barr}. A \emph{Grothendieck-Verdier category} in $\Lexf$ (note that our conventions are dual to the ones in~\cite{bd})
	is a monoidal category $\cat{A}$ in $\Lexf$  with monoidal product $\otimes:\cat{A}\boxtimes\cat{A}\to\cat{A}$
	(this is exactly a non-cyclic associative algebra structure)
	 together with an object $K\in\cat{A}$, the \emph{dualizing object},
	  such that the functors $\cat{A}(K,X\otimes -)$ become representable via $\cat{A}(K,X\otimes -)\cong \cat{A}(DX,-)$ with $D:\cat{A}\to\cat{A}^\op$ being an equivalence. 
	The functor $D$ is called \emph{Grothendieck-Verdier duality}; it sends the monoidal unit $I\in \cat{A}$ to $K$. Moreover, there are canonical isomorphisms $D^2I\cong I$ and $D^2 K\cong K$. A \emph{pivotal structure} on a Grothendieck-Verdier category is a monoidal isomorphism $\omega:\id_\cat{A}\cong D^2$ whose component at $K$ is the canonical isomorphism $D^2 K\cong K$. 
	Under the equivalence from pivotal Grothendieck-Verdier categories in $\Lexf$ with cyclic $\As$-algebras in $\Lexf$, a pivotal Grothendieck-Verdier category $\cat{A}$ needs to be equipped 
	 with a non-degenerate pairing. This pairing is $\kappa(-,-)=\cat{A}(D-,-)$ with its symmetry isomorphism coming from the pivotal structure. 
	 A pivotal structure can be equivalently described through isomorphisms $\psi_{X,Y}:\cat{A}(K,X \otimes Y)\cong \cat{A}(K,Y\otimes X)$ squaring to the identity and subject to a cocycle condition~\cite[Section~6]{bd}.
	
		We should point out that the Grothendieck-Verdier duality can be, but need not be a \emph{rigid duality}. In the case of a rigid duality, $D$ sends $X \in \cat{A}$ to an object $X^\vee$ for which we can find a morphism $d:X^\vee\otimes X \to I$ as well as a morphism $b: I\to X\otimes X^\vee$ such that the zigzag identities are satisfied. If the Grothendieck-Verdier duality of a pivotal Grothendieck-Verdier category in $\Lexf$ is rigid and if its unit is simple, it is a \emph{pivotal finite tensor category} in the sense of~\cite{etingofostrik,egno}.
		Note that in principle, one can define two different versions of a rigid duality, a left and a right one. For pivotal finite tensor categories, these canonically agree, so we refrain from making this distinction.
		It should be noted that, even in Grothendieck-Verdier categories, generalizations of the evaluation and coevaluation exist, but these satisfy substantially weaker properties, see \cite[Theorem 4.5]{cs97} and \cite[Definition 3.33]{fsswfrobenius}.

	By~\cite[Section~3.1]{bd} a Grothendieck-Verdier category $\cat{A}$ has a
	\emph{second monoidal product} defined via
	\begin{align} X \odot Y := D^{-1}(DY \otimes DX) \label{eqn2ndmonoidalproduct}\end{align} for all $X,Y \in \cat{A}$. 
	This is not necessarily
	a monoidal product in $\Lexf$,
	but it is right exact. If $D$ is rigid, then $\odot \cong \otimes$. 
	
	With this second monoidal product, we can characterize self-dual objects in a cyclic associative algebra in $\Lexf$. We find a structure that is dual to the notion of a Grothendieck-Verdier pairing in~\cite[Definition~4.7]{fsswfrobenius}.
	
	\begin{lemma}[Characterization of self-dual objects with coefficients in a cyclic associative algebra / pivotal Grothendieck-Verdier category]\label{lemmaequivdes}
		Let $\cat{A}$ be a 
		pivotal Grothendieck-Verdier category in $\Lexf$.
		The structure of a self-dual object on some $X\in \cat{A}$ amounts exactly to morphisms $\beta :    X \odot X \to I$ and $\delta : K \to X \otimes X$ such that $\delta$ 	
		is a fixed point of the $\mathbb{Z}_2$-action on $\cat{A}(K,X\otimes X)$ via the pivotal structure
		and such that
		\begin{align}
		\cat{A}(K,X \otimes X)\otimes \cat{A}(X\odot X,I)\cong \cat{A}(DX,X)\otimes \cat{A}(X,DX) \ra{\circ_X} \cat{A}(DX,DX)
		\end{align}
		sends $\delta \otimes \beta$ to $\id_{DX}$
		while
		\begin{align}
		\cat{A}(X\odot X,I)	\otimes  	\cat{A}(K,X \otimes X)\cong
		\cat{A}(X,DX)	\otimes
		\cat{A}(DX,X) \ra{\circ_{DX}} \cat{A}(X,X)
		\end{align}
		sends $\beta \otimes \delta$ to $\id_X$.
		\label{lemmaequivdesii}
	\end{lemma}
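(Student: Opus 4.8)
The plan is to reduce the statement to Lemma~\ref{lemmaselfdualinlex} and Remark~\ref{rempairingcopairing}, which already identify a self-dual structure on $X\in\cat{A}$ with an isomorphism $\psi:X\to DX$ satisfying $D\psi\circ\omega_X=\psi$, or dually with a symmetric $2$-copairing $\delta':k\to\kappa(X,X)$ whose associated $2$-pairing is non-degenerate. Since $\cat{A}$ is a cyclic $\As$-algebra in $\Lexf$, here $\kappa(-,-)=\cat{A}(D-,-)$ and the coevaluation is the coend $\Delta=\int^{Y}DY\boxtimes Y$, so the entire content of the lemma is to re-express this data --- together with the non-degeneracy and symmetry conditions of Definition~\ref{defnondeg2pairing} --- in Grothendieck-Verdier terms.

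First I would record the two canonical identifications appearing (implicitly) in the displayed composites. The defining representability of $D$ gives $\cat{A}(K,X\otimes X)\cong\cat{A}(DX,X)=\kappa(X,X)$; since $\kappa\circ(X\boxtimes X)\cong(-)\otimes\cat{A}(DX,X)$ as endofunctors of $\vect$, a $2$-copairing $\delta'$ is literally a morphism $\tilde\delta:DX\to X$, which under the above isomorphism is the datum $\delta:K\to X\otimes X$. For the other factor I would use $X\odot X=D^{-1}(DX\otimes DX)$, then apply $D$, then the representability for $DX$, then the pivotal isomorphism $\omega$, to get the chain
\begin{align}
\cat{A}(X\odot X,I)\;\cong\;\cat{A}(K,DX\otimes DX)\;\cong\;\cat{A}(D^2X,DX)\;\cong\;\cat{A}(X,DX).
\end{align}
Under the calculus construction of~\cite[Section~6]{cyclic} the $2$-pairing $\beta:X\boxtimes X\to\Delta$ corresponds, through this chain, to a morphism $\tilde\beta:X\to DX$ and thus to the datum $\beta:X\odot X\to I$ of the statement (this is exactly the structure dual to the Grothendieck-Verdier pairing of~\cite[Definition~4.7]{fsswfrobenius}). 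With these identifications the two composition maps in the statement become honest composition, $\circ_X:\cat{A}(DX,X)\otimes\cat{A}(X,DX)\to\cat{A}(DX,DX)$ and $\circ_{DX}:\cat{A}(X,DX)\otimes\cat{A}(DX,X)\to\cat{A}(X,X)$.

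Next I would check that the two displayed equations are the faithful unwinding of the bicategorical non-degeneracy: feeding $\cat{S}=\Lexf$, $\Delta=\int^{Y}DY\boxtimes Y$ and $\kappa=\cat{A}(D-,-)$ into the zigzag identities of Definition~\ref{defnondeg2pairing} and using the calculus bookkeeping turns them precisely into $\tilde\beta\circ\tilde\delta=\id_{DX}$ and $\tilde\delta\circ\tilde\beta=\id_X$. Hence the pair $(\beta,\delta)$ subject to these conditions is the same as a single isomorphism $\psi:=\tilde\beta:X\to DX$, with inverse $\tilde\delta=\psi^{-1}$. This step is essentially routine coend calculus, and it is the one place where one must make sure that every intervening canonical isomorphism is the expected one, so that nothing is twisted by an uncontrolled automorphism.

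Finally I would treat the symmetry. Unwinding the Boyarchenko-Drinfeld description of the pivotal structure via $\psi_{X,Y}:\cat{A}(K,X\otimes Y)\cong\cat{A}(K,Y\otimes X)$ (see~\cite[Section~6]{bd}) shows that, under $\cat{A}(K,X\otimes X)\cong\cat{A}(DX,X)$, the $\mathbb{Z}_2$-action is the map $f\mapsto\omega_X^{-1}\circ Df$ on $\cat{A}(DX,X)$, which is precisely the $\mathbb{Z}_2$-action on $\kappa(X,X)$ induced by the symmetry of $\kappa$ recalled in Section~\ref{seccycas}. So $\delta$ is a fixed point iff $\psi^{-1}=\omega_X^{-1}\circ D(\psi^{-1})$, which rearranges to $D\psi\circ\omega_X=\psi$ --- exactly the symmetry condition on the self-dual object in Lemma~\ref{lemmaselfdualinlex}, and, since a non-degenerate copairing determines its pairing uniquely, equivalent to the symmetry of $\beta$ demanded by Definition~\ref{defnondeg2pairing}; thus imposing the fixed-point condition on $\delta$ alone loses nothing. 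The assignments $(\beta,\delta)\leftrightarrow\psi$ built along the way are manifestly mutually inverse and natural in morphisms of self-dual objects, giving the asserted bijection. The main obstacle is not a hard idea but precisely this bookkeeping: verifying that the abstract zigzag and symmetry coherences survive all the coend and duality identifications intact, without an extra canonical twist creeping in.
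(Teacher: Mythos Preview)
Your proposal is correct and follows essentially the same route as the paper: both arguments identify $\kappa(X,X)\cong\cat{A}(K,X\otimes X)$ for the copairing side and run the chain $\cat{A}(X\odot X,I)\cong\cat{A}(K,DX\otimes DX)\cong\cat{A}(D^2X,DX)\cong\cat{A}(X,DX)$ for the pairing side, then observe that non-degeneracy and symmetry translate as stated. The only cosmetic difference is that the paper computes $(\cat{A}\boxtimes\cat{A})(X\boxtimes X,\Delta)$ directly via Sweedler notation for the coend and then declares the remaining translation immediate, whereas you make the zigzag and symmetry verification explicit by passing through the characterization of Lemma~\ref{lemmaselfdualinlex}; this extra routing is harmless and arguably clearer.
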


	Justified by this result, we will refer to a map $\beta : X \odot X \to I$ as in Lemma~\ref{lemmaequivdesii} as a \emph{non-degenerate symmetric pairing on $X$}; we call
	$\delta : K	 \to X \otimes X$ 
	 the associated \emph{non-degenerate symmetric copairing}.  
	
	\begin{proof}
		We have $\kappa(X,X)\cong \cat{A}(K,X \otimes X)$, and the symmetry isomorphism of $\kappa$ amounts to the $\mathbb{Z}_2$-action on $\cat{A}(K,X \otimes X)$ by the  pivotal structure of $D$.
		Similarly, \begin{align} (\cat{A}\boxtimes \cat{A})(X \boxtimes X,\Delta)&=\cat{A}(X,\Delta')\otimes\cat{A}(X,\Delta'') \qquad \text{with Sweedler notation $\Delta = \Delta' \boxtimes\Delta''$} \\& \cong \kappa(DX,\Delta')\otimes\kappa(DX,\Delta'')\\&\cong \kappa(DX,DX)\qquad \text{\cite[Remark~2.25]{cyclic}}\\&\cong \cat{A}(D^2X,DX)\\&\cong \cat{A}(K,DX \otimes DX)\\& \cong \cat{A}(D(DX\otimes DX),I) \\ &\cong \cat{A}(X\odot X,I) \ . \end{align}
		From this, we can deduce that the description given in Definition~\ref{defnondeg2pairing} is equivalent the one given in the Lemma. 	
	\end{proof}
	
With Lemma~\ref{lemmaequivdes}, 
we are in the position to characterize cyclic associative algebras relative to cyclic associative algebras in $\Lexf$, i.e.\ cyclic associative algebras in pivotal Grothendieck-Verdier categories. 
	Needless to say, we expect some sort of Frobenius algebras, but we should highlight that the fact that these are the type of algebras that the cyclic microcosm principle produces is a statement that requires a proof.
	Viewed differently, it is of course also a test that the cyclic microcosm principle in this article is set up the way it should be.
	
	Symmetric Frobenius algebras can be defined in pivotal rigid monoidal categories~\cite{fuchsstigner}; the fact that they can be defined through the microcosm principle in $\star$-autonomous categories is mentioned in~\cite[Section~2]{nlab:microcosm_principle} (see however the warning in Remark~\ref{remwarning} below). The notion also appears in~\cite[Definition~2.3.2]{egger} and \cite[Section~4.1]{fsswfrobenius}.
	For us, the following notion will turn out to be the `correct' one
	(where `correct' has the precise meaning that Proposition~\ref{propsymfa}, that we will state afterwards, holds):

	\begin{definition}\label{defsymFA}
		A \emph{symmetric Frobenius algebra} in a pivotal Grothendieck-Verdier category $\cat{A}$ in $\Lexf$ is an 
		object $F \in \cat{A}$
		together with \begin{itemize}
			\item[(M)] a multiplication $\mu : F \odot F \to F$ that is associative with respect to the associators of $(\cat{A},\odot)$,
			\item[(U)] a unit $\eta : K \to F$ for $\mu$ with respect to the unitors of $(\cat{A},\odot)$
			(the domain of the unit is the dualizing object which is the monoidal unit of $\odot$),
			\item[(P)] a non-degenerate symmetric pairing
			$\beta : F \odot F \to I$ as in Lemma~\ref{lemmaequivdes},
			\item[(I)] subject to the invariance condition on $\beta$ that $\beta (\eta,\mu)=\beta$ as maps $F \odot F \to I$. 
		\end{itemize}	
	\end{definition}

	\begin{proposition}\label{propsymfa}
		Cyclic associative algebras in cyclic associative algebras in $\Lexf$ are equivalent to symmetric Frobenius algebras in pivotal Grothendieck-Verdier categories in $\Lexf$. 
	\end{proposition}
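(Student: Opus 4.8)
The plan is to peel both sides of the claimed equivalence down to the same concrete package --- an object, a multiplication, a unit, and a self-duality --- and then match the defining conditions one by one. Throughout I work inside the groupoid $\catf{CycAlg}(\As;\cat{A})$ coming from the cyclic analogue of Definition~\ref{defmodalgcoeff} (see Remarks~\ref{remgrpd}--\ref{remcyclicalg}), where $\cat{A}$ is viewed as a cyclic $\As$-algebra in $\Lexf$, i.e.\ a pivotal Grothendieck-Verdier category by \cite[Theorem~4.12]{cyclic}, with pairing $\kappa(-,-)=\cat{A}(D-,-)$.

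\textbf{Reducing to generators.} By the cyclic version of Remark~\ref{remgenopcomp}, a cyclic $\As$-algebra with coefficients in $\cat{A}$ is the same as a self-dual object $X\in\cat{A}$ together with the components of a parallel section $\xi\colon\star\to\vb_X^\cat{A}$ on a set of operations generating $\As$ under operadic composition, subject only to the relations of $\As$ and to parallelism and the unitality condition of Definition~\ref{defmodalgcoeff}. Using the presentation of the cyclic associative operad from \cite[Section~3]{cyclic}, I would take as generators the unit operation $u$ (the unique cyclic order on the $1$-leg corolla $T_0$) and one of the two multiplication operations $\mu$ on the $3$-leg corolla $T_2$, alongside the operadic identity $1_\As$ on the $2$-leg corolla $T_1$. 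By Lemma~\ref{lemmaequivdes} the self-dual structure on $X$ is exactly a non-degenerate symmetric pairing $\beta\colon X\odot X\to I$ with copairing $\delta\colon K\to X\otimes X$. Recalling the explicit form of the functors $\cat{A}_o$ from the proof of \cite[Theorem~4.12]{cyclic} --- notably $\cat{A}_u(-)\cong\cat{A}(K,-)$ and $\cat{A}_\mu(X_1,X_2,X_3)\cong\cat{A}(K,X_1\otimes X_2\otimes X_3)$ for the chosen cyclic order --- the component $\xi_u$ is literally a morphism $\eta\colon K\to X$, and $\xi_\mu\in\cat{A}(K,X^{\otimes 3})$ corresponds bijectively, via a chain of natural isomorphisms using the representability defining $D$ and the self-duality $\psi\colon X\ra{\sim}DX$ entirely parallel to the one displayed in the proof of Lemma~\ref{lemmaequivdes}, to a morphism $\mu\colon X\odot X\to X$. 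I then set $F:=X$.

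\textbf{Matching conditions.} The next step is to check that the conditions packaged into a cyclic $\As$-algebra with coefficients translate precisely into (P), (U), (M), (I) of Definition~\ref{defsymFA}: the self-duality of $X$ is exactly (P); the unitality condition of Definition~\ref{defmodalgcoeff} merely pins $\xi_{1_\As}$ to $\delta$ and so contributes nothing new once $\beta$ is fixed; naturality of $\xi$ under the unitality relation $\mu\circ u=1_\As$ in $\As$ forces $\mu$ to be unital for $\eta$ against the unitors of $(\cat{A},\odot)$, giving (U); naturality under the associativity relation of $\As$ on the $4$-leg corolla forces $\mu$ to be associative for $\odot$, giving (M); and the requirement that $\xi$ be a \emph{parallel} section forces $\xi_\mu$ to be fixed by $\Aut_{\int\As}(T_2,\mu)=\mathbb{Z}_3$, the cyclic rotation of the three legs, which under the identification $\cat{A}(K,X^{\otimes 3})\cong\cat{A}(X\odot X,X)$ and the symmetry of $\beta$ becomes exactly the Frobenius invariance (I). The converse construction runs this dictionary backwards: from a symmetric Frobenius algebra $(F,\mu,\eta,\beta)$ one puts $X=F$ with its self-duality from $\beta$, defines $\xi$ on $1_\As,u,\mu$ as above, and extends monoidally over $\int\As$ by operadic composition --- well-definedness of the extension is exactly associativity and unitality, and parallelism is exactly (I) together with the symmetry of $\beta$. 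A routine verification that the two assignments are mutually inverse, and that a morphism in $\catf{CycAlg}(\As;\cat{A})$ --- a morphism of self-dual objects (hence an isomorphism) compatible with $\xi$ --- is the same datum as an isomorphism of symmetric Frobenius algebras, then upgrades this to the asserted equivalence of groupoids.

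\textbf{Main obstacle.} The delicate point lies entirely in the condition-matching step: verifying that the $\mathbb{Z}_3$-action rotating the legs of $T_2$, transported along $\cat{A}(K,X^{\otimes 3})\cong\cat{A}(X\odot X,X)$, really coincides with the cyclic symmetry of the Frobenius pairing. This is where the pivotal structure $\omega\colon\id_\cat{A}\ra{\sim}D^2$ enters essentially and where the bookkeeping with $D$, $D^{-1}$, $\odot$ and $\omega$ is not merely formal --- it is the same computation underlying Lemma~\ref{lemmaequivdes}, now tracked across the $\mathbb{Z}_3$-orbit. One also has to confirm, via \cite[Section~3]{cyclic}, that associativity and unitality generate all relations among the chosen generators of the cyclic operad $\As$, so that imposing naturality of $\xi$ on them together with $\mathbb{Z}_3$-invariance of $\xi_\mu$ is genuinely sufficient; with that presentation in hand this is bookkeeping, but it is the part that must be carried out by hand rather than cited.
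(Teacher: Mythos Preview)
Your approach follows the same architecture as the paper's proof: unpack the cyclic microcosm definition using a presentation of the cyclic operad $\As$, identify the self-duality with~(P) via Lemma~\ref{lemmaequivdes}, and match the generating operations with $\mu,\eta$ and the relations with (M), (U), (I). The reduction to generators via Remark~\ref{remgenopcomp} and the chain of isomorphisms identifying $\xi_\mu$ with a map $F\odot F\to F$ are both exactly what the paper does.

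The one substantive divergence is in the derivation of~(I). The paper obtains~(I) from the \emph{invarianciator} $\gamma:\kappa(I,-\otimes-)\cong\kappa$, which is one of the generating isomorphisms in the presentation of cyclic $\As$-algebras from \cite[Theorem~4.2 and Definition~4.1~(Z)]{cyclic}; at the microcosm level this becomes precisely the relation $\beta(\eta,\mu)=\beta$. You instead derive~(I) from the $\mathbb{Z}_3$-invariance of $\xi_\mu$ under cyclic rotation of the three legs. What that invariance actually yields is cyclic invariance of the trilinear form, i.e.\ $\beta(a,\mu(b,c))=\beta(c,\mu(a,b))$, which is \emph{not literally}~(I). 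The two conditions are equivalent once (U), (M), and the symmetry of $\beta$ are in place --- cyclic invariance plus unitality gives $\beta(\eta,\mu(a,b))=\beta(b,\mu(\eta,a))=\beta(b,a)=\beta(a,b)$, and conversely (I) plus associativity plus symmetry of $\beta$ recovers cyclic invariance --- but you should say this rather than assert the $\mathbb{Z}_3$-action ``becomes exactly'' (I).

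Relatedly, your appeal to \cite[Section~3]{cyclic} for the completeness of relations does not quite line up: the presentation there (more precisely Theorem~4.2) packages the cyclic datum via the invarianciator, not via the $\mathbb{Z}_3$-action on the arity-three corolla. So either switch to the invarianciator (as the paper does) and cite Theorem~4.2 for the complete list of generating isomorphisms, or keep your $\mathbb{Z}_3$-approach but then supply the short argument above showing equivalence with~(I), and separately argue that $\{$associativity, unitality, $\mathbb{Z}_3$-invariance on $T_2$, $\mathbb{Z}_2$-invariance on $T_1\}$ exhausts the relations --- this is not what \cite{cyclic} states directly.
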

	
	\begin{proof}
		A cyclic associative algebra with coefficients in a pivotal Grothendieck-Verdier category $\cat{A}$ has an underlying self-dual object $F \in \cat{A}$.
		This self-duality amounts to a symmetric non-degenerate pairing $\beta : F \odot F \to I$, see Lemma~\ref{lemmaequivdes}; this is point~(P) in Definition~\ref{defsymFA}.
		We now need to read off what a cyclic associative structure on $F$ with coefficients in $\cat{A}$, i.e.\
		a monoidal natural transformation $k \to \vb_F^\cat{A}$, amounts to. 
		From Remark~\ref{remgenopcomp} (or rather its cyclic version, see also
		Remark~\ref{remcyclicalg}), it follows that this can be extracted from
		a presentation of the cyclic associative operad of $\As$  in terms of generators and relations in the sense of \cite[Section~3.2 for the notion \& Theorem~4.2 for the case of $\As$]{cyclic}. After spelling this out, we obtain: \begin{itemize}
			\item An operation on $F$ for every generating operation of $\As$. The generating operations are the binary product and the unit. For the binary product, we get by definition a linear map from $k$ to
			\begin{align}
			\label{eqnkappaF}	\kappa(F,F\otimes F)\cong \cat{A}(DF,F\otimes F)\cong \cat{A}(D(F\otimes F),F)\cong \cat{A}(DF\odot DF,F)\cong \cat{A}(F \odot F,F) \ ,
			\end{align}
			i.e.\ 
			a vector in $\cat{A}(F \odot F,F)$, which is a map $\mu :F \odot F \to F$ (point~(M)). 
			For the unit, we 
			obtain a map $\eta :K\to F$ (point~(U)). 
			
			\item A relation for each of the generating isomorphisms (associator, unitor and the isomorphism $\gamma:\kappa(I,-\otimes-)\cong \kappa$ --- the `invarianciator' for the pairing~\cite[Definition~4.1~(Z)]{cyclic}). 
			The relation coming from the associator is associativity of $\mu$ with respect to the associators of $(\cat{A},\odot)$
			while the unitor gives us unitality of $\mu$ with unit $\eta$ with respect to the unitors of $(\cat{A},\odot)$. Finally, the invarianciator $\gamma$ gives us point~(I).
		\end{itemize}
		In summary, a cyclic associative algebra with coefficients in a $\Lexf$-valued cyclic associative algebra amounts precisely to the structure and relations listed in Definition~\ref{defsymFA} for a symmetric Frobenius algebra.
	\end{proof}
	
	\begin{remark}
		\label{remwarning}
		In~\cite[Proposition~3.2]{street} Grothendieck-Verdier categories (not necessarily linear ones) are described as Frobenius pseudo-monoids, so one might think that Proposition~\ref{propsymfa} (at least without the symmetry) can be seen as the result of applying the microcosm principle along the lines of~\cite[Section~2]{nlab:microcosm_principle} to Grothendieck-Verdier categories.  
		As plausible as this may sound, this is not the case! 
		The notion of Frobenius pseudo-monoid in~\cite{street} does not agree with the notion of a cyclic associative algebra in every symmetric monoidal bicategory. Most importantly, it is wrong that cyclic associative algebras in $\Cat$ are pivotal Grothendieck-Verdier categories. 
		\end{remark}

	\begin{example}\label{examplesymfrob}
		Let $\cat{C}$ be a pivotal finite tensor category (this entails in particular a rigid duality), then a source for
		symmetric Frobenius algebras are \emph{pivotal module categories}~\cite[Section~3.6]{relserre}:
		A pivotal module category $\cat{M}$ over $\cat{C}$ is a certain kind of 
		 module category whose internal hom
		$\inthom (-,-):\cat{M}^\op \boxtimes\cat{M}\to \cat{C}$  
		comes equipped with isomorphisms $\inthom(m,n)^\vee \cong \inthom(n,m)$ for all $m,n \in \cat{M}$
		 subject to further conditions. Then the internal endomorphism algebra $\inthom(m,m)$ for any $m\in \cat{M}$ is a symmetric  Frobenius algebra in $\cat{C}$~\cite[Theorem~3.14]{relserre}. 
		 A generalization of this type of construction procedure of Frobenius algebras beyond the rigid case is given in~\cite[Theorem~5.20]{fsswfrobenius}. 
		\end{example}

	\part{Applications in quantum topology and conformal field theory}
	\section{Classification of open correlators\label{secopen}}
We now turn to the applications in \emph{open conformal field theory}. We denote by $\O$ the modular \emph{open surface operad}, 
a groupoid-valued operad whose groupoid of operations of arity $n$ (to be thought of as $n$ inputs and one output)
has as objects connected compact oriented surfaces with at least one boundary component and $n+1$ parametrized intervals embedded in its boundary. Morphisms of $\O(n)$ are mapping classes of diffeomorphisms between these surfaces
 preserving the orientation and the boundary parametrization.
	References for a more detailed definition of $\O$ include~\cite{costellotcft,giansiracusa}; here we follow the definitions in~\cite[Section~3]{sn},
	where the open-closed case is considered, and also~\cite{envas}. 
	
	\begin{definition}
		Let $\omf$ be a $\Lexf$-valued
		modular $\O$-algebra, i.e.\ an open modular functor with values in $\Lexf$, also known as categorified open topological field theory~\cite[Section~2]{envas} (this is the monodromy data of an open conformal field theory).
		A \emph{consistent system of open correlators} for $\omf$ is a modular $\O$-algebra with coefficients in $\omf$. 
		\end{definition}
	
This is in line with the notion of consistent systems of correlators in~\cite{jfcs,jfcslog} (that are also defined via monoidal natural transformations --- so the connection is not surprising), but here we consider the open version. Let us unpack the data:
	Denote by $\cat{A}$ the underlying category  for the open modular functor $\omf$ (one might call this the \emph{category of boundary labels}). For a corolla $T$, $\Sigma \in \O(T)$ and $X_i \in \cat{A}$ for $1\le i\le n:=|\Legs(T)|$, the vector space 
	$\omf(\Sigma;X_1,\dots,X_n)$ is the space of conformal blocks for the surface $\Sigma$ and boundary labels $X_1,\dots,X_n$.
	A consistent system of open correlators $\xi \in \catf{ModAlg}(\O;\omf)$ has an underlying object $F\in \cat{A}$ and vectors $\xi_\Sigma^F \in \omf(\Sigma;F,\dots,F)$ in the spaces of conformal blocks that are invariant under the mapping class group actions and compatible with sewing along boundary intervals.

		\begin{corollary}\label{corext-o}
		For any cyclic associative 
		algebra $\cat{A}$ in $\cat{S}$, there is an equivalence
		\begin{equation}\label{eqnequivcycframed-o}
			\begin{tikzcd}
				\catf{CycAlg}(\As;\cat{A}) \ar[rrrr, shift left=2,"\text{extension}"] &&\simeq&& \ar[llll, shift left=2,"\text{restriction}"] 
				\catf{ModAlg}\left( \O; \Ao\right) \ , 
			\end{tikzcd}
		\end{equation} 
	where $\Ao$ denotes the modular extension of the cyclic $\As$-algebra $\cat{A}$.
	\end{corollary}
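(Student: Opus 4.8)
The corollary is the special case $\cat{O}=\As$ of Theorem~\ref{exttheorem}, rephrased through the identification of the modular envelope of the associative operad with the open surface operad. The plan is therefore, first, to invoke Theorem~\ref{exttheorem} for the $\Cat$-valued cyclic operad $\As$, which yields the canonical pair of inverse equivalences between $\catf{CycAlg}(\As;\cat{A})$ and $\catf{ModAlg}(\Pi|B\Envint\! \As|;\Ao)$; here $\Ao$ is the modular extension of $\cat{A}$, which by Remark~\ref{remwithoutloc} inverts all morphisms in its categories of operations and hence may equally well be regarded as a modular algebra over the localisation $\Pi|B\Envint\! \As|$, this having no effect on the category of modular algebras with coefficients in it.

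Second, I would transport this equivalence along the equivalence of $\Grpd$-valued modular operads $\Phi : \Pi|B\Envint\! \As| \to \O$ established in \cite{giansiracusa,envas}; by construction, $\Ao$ regarded as a modular $\O$-algebra is precisely the transport of the $\Pi|B\Envint\! \As|$-algebra $\Ao$ along $\Phi$. Applying the Grothendieck construction to $\Phi$ gives a symmetric monoidal functor $\int\Phi : \int\Pi|B\Envint\! \As| \to \int\O$, and I would check that precomposition along $\int\Phi$ carries the trivial operad $\star$ over $\O$ to the trivial operad $\star$ over $\Pi|B\Envint\! \As|$ and carries the insertion bundle $\vb_X^{\Ao}$ over $\O$ to the insertion bundle of $\Ao$ regarded over $\Pi|B\Envint\! \As|$ --- this last point being exactly the naturality of the construction in Proposition~\ref{propinsertionbundle} with respect to the transport along $\Phi$. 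A self-dual object $X$ is literally the same datum on both sides, since the underlying object of $\cat{S}$ and its non-degenerate symmetric pairing are unchanged. Because $\Phi$ is an equivalence and the projections $\int(-)\to\Graphs$ are Grothendieck fibrations, precomposition along $\int\Phi$ is then an equivalence $\catf{ModAlg}(\O;\Ao) \simeq \catf{ModAlg}(\Pi|B\Envint\! \As|;\Ao)$ that respects the unitality condition of Definition~\ref{defmodalgcoeff}, as $\Phi$ preserves operadic identities. Composing it with the equivalence from Theorem~\ref{exttheorem} produces \eqref{eqnequivcycframed-o}, the extension and restriction functors being those of Theorem~\ref{exttheorem} conjugated through $\Phi$.

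The step I expect to be the main obstacle is precisely the compatibility of the insertion bundle --- equivalently, of the calculus construction recalled just before the proof of Proposition~\ref{propinsertionbundle} --- with the operad equivalence $\Phi$: one must check that $\vb_X^{\Ao}\circ\int\Phi$ and the insertion bundle of $\Ao$ over $\Pi|B\Envint\! \As|$ agree as symmetric monoidal functors, and this has to be carried out in a manner that keeps track of the coherence data, since $\Phi$ is an equivalence of modular operads only up to coherent homotopy (being assembled from the nerve, geometric realisation and fundamental groupoid) and the bicategorical bookkeeping from Section~\ref{secmodularextension} is threaded through it. Once this naturality is established, the rest of the argument is a formal manipulation of equivalences.
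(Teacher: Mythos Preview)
Your proposal is correct and follows essentially the same approach as the paper: apply Theorem~\ref{exttheorem} with $\cat{O}=\As$ and then transport along the equivalence $\Pi|B\Envint\!\As|\simeq\O$ from~\cite{envas}. The paper's proof is a two-line invocation of exactly these two ingredients; your discussion of the transport along $\Phi$ (compatibility of $\star$, of the insertion bundle, and of the unitality condition) spells out what the paper takes as routine, and the ``main obstacle'' you anticipate is not treated as an obstacle in the paper at all --- it is absorbed into the general principle that modular algebras with coefficients are manifestly functorial under equivalences of the underlying modular operad.
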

	
	\begin{proof}
		This follows directly
		 from Theorem~\ref{exttheorem} if we additionally use the equivalence
		$\Pi |B\Envint \As| \ra{\simeq} \O$ from~\cite[Equation~(2.1)]{envas}. 
		\end{proof}

		\begin{theorem}[Classification of open correlators]\label{thmopencorrelators}
				Given an open modular functor $\omf$ in $\Lexf$, let $\cat{A}$ be the pivotal Grothendieck-Verdier category obtained by evaluation of $\omf$
			on disks with intervals 
			embedded
			in its boundary.
			Then the consistent systems of
			open correlators for $\omf$ are exactly symmetric Frobenius algebras in $\cat{A}$.
			More explicitly,
			the open correlator $\xi^F$
			associated
			to 	a symmetric Frobenius algebra  $F$ in $\cat{A}$
			amounts to
			vectors $\xi_\Sigma^F \in \omf(\Sigma;F,\dots,F)$ in the spaces of conformal blocks associated
			to  surfaces $\Sigma \in \O(n)$ with $F$ appearing $n$ times as the boundary label. 
			These vectors are mapping class group invariant and solve the constraints for sewing along intervals.
	\end{theorem}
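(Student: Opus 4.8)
The plan is to assemble the statement from three ingredients that are already in place: the recognition of open modular functors as modular extensions of their disk restrictions (from \cite{giansiracusa,envas}), the microcosmic modular extension theorem (Corollary~\ref{corext-o}, itself a special case of Theorem~\ref{exttheorem}), and the operadic computation of cyclic associative algebras (Proposition~\ref{propsymfa}).

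First I would set up the identification $\omf\simeq\Ao$. The cyclic associative operad $\As$ embeds into the open surface operad $\O$ as the genus zero disks with embedded boundary intervals; under the equivalence $\Pi|B\Envint\As|\simeq\O$ used in the proof of Corollary~\ref{corext-o} this is precisely the inclusion $J$ appearing in the proof of Theorem~\ref{exttheorem}. Restricting the modular $\O$-algebra $\omf$ along $J$ therefore yields a cyclic $\As$-algebra in $\Lexf$, which by \cite[Theorem~4.12]{cyclic} is a pivotal Grothendieck-Verdier category, and which is exactly the category $\cat{A}$ named in the statement, since $J^*\omf$ is the evaluation of $\omf$ on disks. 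By the uniqueness of the modular extension (\cite[Proposition~7.1]{cyclic}, \cite{mwansular}) together with the fact (\cite{envas}) that every open modular functor arises as such an extension, we obtain $\omf\simeq\Ao$ as modular $\O$-algebras, where $\Ao$ denotes the modular extension of $\cat{A}$; in particular $\catf{ModAlg}(\O;\omf)=\catf{ModAlg}(\O;\Ao)$.

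Then Corollary~\ref{corext-o} supplies a canonical equivalence $\catf{ModAlg}(\O;\Ao)\simeq\catf{CycAlg}(\As;\cat{A})$, and Proposition~\ref{propsymfa} identifies the latter with the groupoid of symmetric Frobenius algebras in the pivotal Grothendieck-Verdier category $\cat{A}$; composing gives the first assertion. For the explicit description I would chase a symmetric Frobenius algebra $F$ through this composite. Being a symmetric Frobenius algebra, $F$ is in particular a self-dual object of $\cat{A}$ with the pairing of Lemma~\ref{lemmaequivdes}, hence a self-dual object of $\Ao=\omf$ (same underlying category and pairing); the extension functor then promotes the cyclic $\As$-algebra structure on $F$ to a modular $\O$-algebra structure with coefficients in $\omf$, i.e.\ to a monoidal natural transformation $\xi^F:\star\to\vb_F^\omf$. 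By Proposition~\ref{propinsertionbundle} the fibre of $\vb_F^\omf$ over an operation $\Sigma\in\O(n)$ is $\omf(\Sigma;F,\dots,F)$, so $\xi^F$ is precisely a family of vectors $\xi_\Sigma^F\in\omf(\Sigma;F,\dots,F)$. Naturality of $\xi^F$ over the invertible morphisms of the groupoids $\O(n)$ says that each $\xi_\Sigma^F$ is $\Map(\Sigma)$-invariant; naturality over the remaining, gluing morphisms of $\int\O$, together with monoidality (cf.\ Remark~\ref{remgenopcomp}), is exactly the system of sewing constraints along boundary intervals; and the unitality condition of Definition~\ref{defmodalgcoeff} pins down the component on the disk with two intervals as the copairing of $F$.

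The main obstacle I foresee is the first step: justifying that an \emph{arbitrary} open modular functor is the modular extension of its restriction to disks and that this restriction is the pivotal Grothendieck-Verdier category named in the statement. This is where the geometric input genuinely enters --- the ribbon graph model for moduli of surfaces underlying \cite{giansiracusa} and its operadic reformulation in \cite{envas} --- and one must check that the pivotal Grothendieck-Verdier structure extracted operadically from $\omf$ coincides with the naive one read off from the values of $\omf$ on disks and their gluings. Granting this, everything else is a formal composition of equivalences established earlier in the paper, and the passage to the explicit vector-level description is just the unwinding of the microcosm definition already carried out at the start of Section~\ref{secopen}.
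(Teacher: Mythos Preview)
Your proposal is correct and follows essentially the same approach as the paper: reduce to $\omf\simeq\Ao$ via \cite[Theorem~2.2]{envas}, then compose the equivalences of Corollary~\ref{corext-o} and Proposition~\ref{propsymfa}. Your additional paragraph unwinding the explicit vector-level description is accurate but goes beyond what the paper includes in the proof proper (that unpacking is deferred to Section~\ref{seccalc}).
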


	\begin{proof}
		Every open modular functor $\omf$
		 is equivalent to the modular extension of its restriction
		 to disks with marked intervals, which is a cyclic $\As$-algebra
		  $\cat{A}$, i.e.\ a pivotal Grothendieck-Verdier category, see~\cite[Theorem~2.2]{envas}.
		This implies that the consistent systems of open correlators for $\omf$ are described by the category
		$\catf{ModAlg}\left( \O; \Ao\right)$. Now the statement follows from
		\begin{align}
			\catf{ModAlg}\left( \O; \Ao\right) \stackrel{\text{Corollary~\ref{corext-o}}}{\simeq}\catf{CycAlg}\left( \As; \cat{A}\right) 
			\stackrel{\text{Proposition~\ref{propsymfa}}}{\simeq} \mathsf{symmetric\ Frobenius\ algebras\ in} \ \cat{A} \ . 
			\end{align}
		\end{proof}

	\section{A calculation recipe  for open correlators\label{seccalc}}
	In this section, we demonstrate how the open correlators from Theorem~\ref{thmopencorrelators} can be calculated explicitly.
	Suppose that $\omf$ is an open modular functor and $\cat{A}$ its underlying pivotal Grothendieck-Verdier category, i.e.\ the restriction to disks $\mathbb{D}^2_n$ with intervals $n\ge 1$ in its boundary. 
	By \cite[Theorem~2.2]{envas} $\omf$ is, up to a canonical equivalence, the modular extension of $\cat{A}$, i.e.\ $\omf\simeq \Ao$ as open modular functors
	($\simeq$ is an equivalence of modular algebras over the open surface operad).
	For this reason, we know~\cite[Example~7.3]{cyclic}
	\begin{align}
		\omf(    \mathbb{D}^2_n ;     X_1,\dots,X_n  ) \cong \cat{A}(K,X_1\otimes \dots \otimes X_1)  \label{eqndiskAhat}
		\end{align}
	for   $X_1,\dots,X_n \in \cat{A}$. Let us now describe in more detail
	the consistent system of open correlators for a symmetric Frobenius algebra $F$ in $\cat{A}$:
	\begin{itemize}
\item \emph{The multiplication, the unit and the pairing:}	For $n=3$ and since $F$ is self-dual
	\begin{align}
			\omf(    \mathbb{D}^2_3 ;      F , F, F  ) \cong \cat{A}(K, F \otimes F \otimes F) \cong \kappa(F,F\otimes F) \stackrel{\eqref{eqnkappaF}}{\cong} \cat{A}(F \odot F, F) \ . 
		\end{align}
	The consistent system of open correlator $\xi^F$ associated to a symmetric Frobenius algebra gives us a vector $\xi_{\mathbb{D}^2_3}^F \in \omf(    \mathbb{D}^2_3 ;      F , F, F  )$, and by definition this is the multiplication $\mu : F \odot F \to F$. 
	Pictorially, this means that the disk with three intervals in its boundary, each labeled with $F$,
	 is `decorated' with the multiplication operation:
		\begin{equation}
	\begin{array}{c}	
	\begin{tikzpicture}[scale=0.5]
		\begin{pgfonlayer}{nodelayer}
			\node [style=none] (0) at (-7, 5) {};
			\node [style=none] (1) at (-6, 5) {};
			\node [style=none] (2) at (-9, 0) {};
			\node [style=none] (3) at (-8, 0) {};
			\node [style=none] (4) at (-5, 0) {};
			\node [style=none] (5) at (-4, 0) {};
			\node [style=none] (10) at (-6.5, 2.5) {};
			\node [style=none] (11) at (-4.5, 0) {};
			\node [style=none] (14) at (-6.5, 5) {};
			\node [style=none] (15) at (-8.5, 0) {};
			\node [style=none] (18) at (-4.5, -0.5) {$F$};
			\node [style=none] (21) at (-6.5, 5.5) {$F$};
			\node [style=none] (22) at (-8.5, -0.5) {$F$};
			\node [style=wbox] (34) at (-11, 4.75) {$\mu$};
			\node [style=none] (35) at (-6.75, 2.75) {};
			\node [style=none] (36) at (-10.5, 4.75) {};
		\end{pgfonlayer}
		\begin{pgfonlayer}{edgelayer}
			\draw [style=open] (0.center) to (1.center);
			\draw [style=open] (5.center) to (4.center);
			\draw [style=open] (3.center) to (2.center);
			\draw (2.center) to (0.center);
			\draw [bend left=45, looseness=1.50] (3.center) to (4.center);
			\draw [style=RED] (11.center) to (10.center);
			\draw [style=RED] (10.center) to (15.center);
			\draw (5.center) to (1.center);
			\draw [style=RED] (10.center) to (14.center);
			\draw [style=end arrow, bend left] (36.center) to (35.center);
		\end{pgfonlayer}
	\end{tikzpicture}
\end{array} \label{eqndefondisks3}
	\end{equation}
	The boundary intervals are drawn in blue.
	The decoration is the red graph whose legs are colored with $F$; the multiplication operation should be imagined as sitting at the trivalent vertex. Note however that the legs are not incoming or outgoing. It is the advantage of working with cyclic operads and their algebras that we do not (and should not) make such distinctions. 
	For $n=1$, $\omf(    \mathbb{D}^2_1 ;      F   )\cong \cat{A}(K,F)$, and the distinguished vector is the unit $K\to F$. For $n=2$, $\omf(    \mathbb{D}^2_2 ;      F  ,F )\cong \cat{A}(K,F\otimes F)$, and the distinguished vector
	is the copairing $\delta : K \to F \otimes F$ by the unitality requirement in Definition~\ref{defmodalgcoeff}. Since this copairing is part of a self-duality, it is by construction non-degenerate. This is simply because the non-degeneracy of the pairing on the microcosmic level needs to match the non-degeneracy of the macrocosmic one. In~\cite[Section~4.5]{ffrsunique} the non-degeneracy is an extra assumption called \emph{non-degeneracy of the disk two-point function}.  
	
	Thanks to the self-duality of $F$, we can see $D\eta$ as a morphism $F\to I$, the \emph{counit} of $F$. Its evaluation on the unit $K\to F$ is the distinguished vector in the space of conformal blocks $\omf(\mathbb{D}^2_0)\cong \cat{A}(K,I)$ for the disk without marked intervals.
	
	\item \emph{A graphical presentation via Poincaré duality:}
	We can draw~\eqref{eqndefondisks3} as a triangle, with the embedded intervals as segments on the edges. The decoration with the multiplication then describes a partition of the triangle that is \emph{Poincaré dual} to the original one:
		\begin{equation}
	\begin{array}{c}\begin{tikzpicture}[scale=0.5]
			\begin{pgfonlayer}{nodelayer}
				\node [style=none] (18) at (5, 2.25) {$F$};
				\node [style=none] (21) at (1, 2.25) {$F$};
				\node [style=none] (22) at (3, -0.75) {$F$};
				\node [style=none] (23) at (0, 0) {};
				\node [style=none] (24) at (3, 4) {};
				\node [style=none] (25) at (6, 0) {};
				\node [style=none] (26) at (0.75, 1) {};
				\node [style=none] (27) at (2.25, 3) {};
				\node [style=none] (28) at (1.5, 0) {};
				\node [style=none] (29) at (4.5, 0) {};
				\node [style=none] (30) at (3.75, 3) {};
				\node [style=none] (31) at (5.25, 1) {};
				\node [style=none] (32) at (3, 1.5) {};
				\node [style=none] (33) at (1.5, 2) {};
				\node [style=none] (34) at (4.5, 2) {};
				\node [style=none] (35) at (3, 0) {};
			\end{pgfonlayer}
			\begin{pgfonlayer}{edgelayer}
				\draw (23.center) to (25.center);
				\draw (25.center) to (24.center);
				\draw (24.center) to (23.center);
				\draw [style=open] (30.center) to (31.center);
				\draw [style=open] (27.center) to (26.center);
				\draw [style=open] (28.center) to (29.center);
				\draw [style=RED] (32.center) to (33.center);
				\draw [style=RED] (32.center) to (34.center);
				\draw [style=RED] (32.center) to (35.center);
			\end{pgfonlayer}
		\end{tikzpicture}
	\end{array} \label{eqndefondisks3t}
	\end{equation}
	In other words, we recover the labeling principle underlying the constructions in \cite{frs1,frs2,frs3,frs4,ffrs}.

	\item
	\emph{Disks with more boundary intervals:}	
	For $n\ge 4$, the vector $\xi_{\mathbb{D}^2_n}^F$ is obtained by gluing disks with three intervals on their boundary together; pictorially: 
	\begin{equation}
	\begin{array}{c}\begin{tikzpicture}[scale=0.5]
			\begin{pgfonlayer}{nodelayer}
				\node [style=none] (0) at (-5, 5) {};
				\node [style=none] (1) at (-4, 5) {};
				\node [style=none] (2) at (-8, 4) {};
				\node [style=none] (3) at (-8, 3) {};
				\node [style=none] (4) at (-7, 0) {};
				\node [style=none] (5) at (-6, 0) {};
				\node [style=none] (6) at (-2, 1) {};
				\node [style=none] (7) at (-1, 1) {};
				\node [style=none] (8) at (0, 3) {};
				\node [style=none] (9) at (0, 4) {};
				\node [style=none] (10) at (-5.25, 2.25) {};
				\node [style=none] (11) at (-6.5, 0) {};
				\node [style=none] (12) at (-1.5, 1) {};
				\node [style=none] (13) at (0, 3.5) {};
				\node [style=none] (14) at (-4.5, 5) {};
				\node [style=none] (15) at (-8, 3.5) {};
				\node [style=none] (16) at (-2, 3) {};
				\node [style=none] (17) at (-4, 3) {};
				\node [style=none] (18) at (-6.5, -0.5) {$F$};
				\node [style=none] (19) at (-1.5, 0.5) {$F$};
				\node [style=none] (20) at (0.5, 3.5) {$F$};
				\node [style=none] (21) at (-4.5, 5.5) {$F$};
				\node [style=none] (22) at (-8.5, 3.5) {$F$};
				\node [style=none] (23) at (10.75, 3.5) {$\xi_{\mathbb{D}^2_5}^F \in   \cat{A}(K,F ^{\otimes 5})$};
				\node [style=none] (24) at (3, 3.5) {$\leadsto$};
				\node [style=none] (25) at (-3.25, 1.75) {};
				\node [style=none] (26) at (-2.5, 4) {};
				\node [style=none] (27) at (-5.75, 4.25) {};
				\node [style=none] (28) at (-4.5, 1.75) {};
			\end{pgfonlayer}
			\begin{pgfonlayer}{edgelayer}
				\draw [style=open] (0.center) to (1.center);
				\draw [style=open] (9.center) to (8.center);
				\draw [style=open] (6.center) to (7.center);
				\draw [style=open] (5.center) to (4.center);
				\draw [style=open] (3.center) to (2.center);
				\draw [bend right, looseness=1.25] (2.center) to (0.center);
				\draw [bend left=45, looseness=1.50] (3.center) to (4.center);
				\draw [bend left=60, looseness=1.25] (5.center) to (6.center);
				\draw [bend left=45, looseness=1.25] (7.center) to (8.center);
				\draw [bend left=45, looseness=0.75] (9.center) to (1.center);
				\draw [style=RED] (11.center) to (10.center);
				\draw [style=RED] (12.center) to (16.center);
				\draw [style=RED] (16.center) to (13.center);
				\draw [style=RED] (16.center) to (17.center);
				\draw [style=RED] (17.center) to (14.center);
				\draw [style=RED] (10.center) to (15.center);
				\draw [style=RED] (10.center) to (17.center);
				\draw [style=mydots] (27.center) to (28.center);
				\draw [style=mydots] (25.center) to (26.center);
			\end{pgfonlayer}
		\end{tikzpicture}
	\end{array} \label{eqndefondisks}
		\end{equation}
	The gluing is along the gray dashed lines.
	Under the isomorphism \begin{align}
		\cat{A}(K,F ^{\otimes n})\cong \cat{A}(DF,F^{\otimes (n-1)})\cong \cat{A}((DF)^{\odot (n-1)},F)\cong \cat{A}(F^{\odot (n-1)},F) \ ,  \end{align}
	the vector $\xi_{\mathbb{D}^2_5}^F$ is the arity $n-1$ multiplication $F^{\odot (n-1)}\to F$ that multiplies elements together from left to right.
		If we think of this surface
		 as obtained by gluing three triangles~\eqref{eqndefondisks3t} together, the decoration is again obtained by passing to the Poincaré dual. 
		
			\item \emph{More complicated surfaces:}
			Beyond disks with marked intervals,
			 $\xi$ is completely determined by the fact that  $\Ao$ is a modular algebra. The detailed treatment is in~\cite[Example~7.3]{cyclic}, see also~\cite[eq.~(4.1)]{envas}; we discuss here an example:
			 Suppose that we glue in~\eqref{eqndefondisks} the two rightmost boundary intervals together, thereby creating an annulus $\Sigma$ with three intervals embedded in its outer boundary circle:
			 	\begin{equation}
			 	\begin{array}{c}\begin{tikzpicture}[scale=0.5]
			 			\begin{pgfonlayer}{nodelayer}
			 				\node [style=none] (0) at (-5, 5) {};
			 				\node [style=none] (1) at (-4, 5) {};
			 				\node [style=none] (2) at (-8, 4) {};
			 				\node [style=none] (3) at (-8, 3) {};
			 				\node [style=none] (4) at (-7, 0) {};
			 				\node [style=none] (5) at (-6, 0) {};
			 				\node [style=none] (6) at (-2, 1) {};
			 				\node [style=none] (7) at (-1, 1) {};
			 				\node [style=none] (8) at (0, 3) {};
			 				\node [style=none] (9) at (0, 4) {};
			 				\node [style=none] (10) at (-5.25, 2.25) {};
			 				\node [style=none] (11) at (-6.5, 0) {};
			 				\node [style=none] (12) at (-1.5, 1) {};
			 				\node [style=none] (13) at (0, 3.5) {};
			 				\node [style=none] (14) at (-4.5, 5) {};
			 				\node [style=none] (15) at (-8, 3.5) {};
			 				\node [style=none] (16) at (-2, 3) {};
			 				\node [style=none] (17) at (-4, 3) {};
			 				\node [style=none] (18) at (-6.5, -0.5) {$F$};
			 				\node [style=none] (19) at (-1.5, 0.5) {$F$};
			 				\node [style=none] (22) at (-8.5, 3.5) {$F$};
			 				\node [style=none] (23) at (10.75, 3.5) {$\xi_{\mathbb{D}^2_5}^F \in   \cat{A}(K,F ^{\otimes 3}\otimes\mathbb{F})$};
			 				\node [style=none] (24) at (3, 3.5) {$\leadsto$};
			 				\node [style=none] (25) at (-3.25, 1.75) {};
			 				\node [style=none] (26) at (-2.5, 4) {};
			 				\node [style=none] (27) at (-5.75, 4.25) {};
			 				\node [style=none] (28) at (-4.5, 1.75) {};
			 				\node [style=none] (29) at (-1, 6.25) {};
			 				\node [style=none] (30) at (-1.5, 5.25) {};
			 				\node [style=none] (31) at (-0.75, 6.75) {};
			 			\end{pgfonlayer}
			 			\begin{pgfonlayer}{edgelayer}
			 				\draw [style=open] (6.center) to (7.center);
			 				\draw [style=open] (5.center) to (4.center);
			 				\draw [style=open] (3.center) to (2.center);
			 				\draw [bend right, looseness=1.25] (2.center) to (0.center);
			 				\draw [bend left=45, looseness=1.50] (3.center) to (4.center);
			 				\draw [bend left=60, looseness=1.25] (5.center) to (6.center);
			 				\draw [bend left=45, looseness=1.25] (7.center) to (8.center);
			 				\draw [bend left=45, looseness=0.75] (9.center) to (1.center);
			 				\draw [style=RED] (11.center) to (10.center);
			 				\draw [style=RED] (12.center) to (16.center);
			 				\draw [style=RED] (16.center) to (13.center);
			 				\draw [style=RED] (16.center) to (17.center);
			 				\draw [style=RED] (17.center) to (14.center);
			 				\draw [style=RED] (10.center) to (15.center);
			 				\draw [style=RED] (10.center) to (17.center);
			 				\draw [style=mydots] (27.center) to (28.center);
			 				\draw [style=mydots] (25.center) to (26.center);
			 				\draw [bend left=105, looseness=0.75] (1.center) to (9.center);
			 				\draw [in=30, out=60, looseness=2.25] (0.center) to (8.center);
			 				\draw [style=REDarrow, in=165, out=105, looseness=0.75] (14.center) to (29.center);
			 				\draw [style=REDarrow, in=-15, out=15, looseness=1.50] (13.center) to (29.center);
			 				\draw [style=mydots] (30.center) to (31.center);
			 			\end{pgfonlayer}
			 		\end{tikzpicture}
			 	\end{array} \label{eqndefondisks2}
			 \end{equation}
		 \end{itemize} 
			 The fact that  $\Ao$ is a modular algebra, in particular the first square in
			 \cite[Definition~2.13]{cyclic}, tells us how to obtain $	\omf(\Sigma;X_1,X_2,X_3)$ from this triangulation of $\Sigma$: We need to fill  the coevaluation object $\Delta =\int^{X \in \cat{A}} DX \boxtimes X \in \cat{A}\boxtimes\cat{A}$ into the arguments of the functor $\Ao(\mathbb{D}_5^2; -)$ in which the gluing occurs. This gives us
			 \begin{align}
			\omf(\Sigma;X_1,X_2,X_3) \cong		\Ao(\Sigma;X_1,X_2,X_3) \cong \Ao(\mathbb{D}_5^2; X_1 , X_2,X_3,\Delta) \ .
			 	\end{align}
		 	With~\eqref{eqndiskAhat}, we arrive at 
		 	\begin{align}
		 		\omf(\Sigma;X_1,X_2,X_3)&\cong	\Ao(\Sigma;X_1,X_2,X_3) \cong \cat{A}(K,X_1\otimes X_2 \otimes X_3 \otimes \mathbb{F}) \\ \text{with}\quad \mathbb{F}& := \otimes \left( \int^{X \in \cat{A}} DX\boxtimes X \right)\ . 
 		 		\end{align}
 	 		Since the consistent system of open correlators $\xi^F$ solves the sewing constraints for gluing along intervals, $\xi_\Sigma^F$ is the image of
 	 		$\xi_{\mathbb{D}_5^2}^F$ under\small
 	 	\begin{align}	\omf(    \mathbb{D}^2_5 ;      F^{\boxtimes 5} )\cong \cat{A}(K,F^{\otimes 5}) \ra{\text{self-duality of $F$}} 
 	 		\cat{A}(K,F^{\otimes 3}\otimes DF \otimes F) \to \cat{A}(K,F^{\otimes 3} \otimes \mathbb{F})\cong \omf(\Sigma;F^{\boxtimes 3}) \ , 
		\end{align}\normalsize
	where the third map is induced by the map 
	\begin{align}\label{eqnsorting}
		 DF \otimes F \to \mathbb{F}\end{align} obtained by applying $\otimes$ to the structure map 
	$
		DF \boxtimes F \to \int^{X \in \cat{A}} DX\boxtimes X
		$ of the coend. Note that by definition the map $F \boxtimes F \to \int^{X \in \cat{A}}DX\boxtimes X$ obtained from the self-duality $DF\cong F$ and structure map of the coend is exactly the symmetric 2-pairing (this is how we defined self-duality in the first place, see Definition~\ref{defnondeg2pairing}).
		We can symbolically represent this vector through the red graph in \eqref{eqndefondisks2}.
		The two arrows landing on the dashed cut symbolize how the two copies of $F$ are mapped to the coend through the structure map and the self-duality.

\begin{example}
	As we have seen above,
	the open correlators can be calculated directly from the operations of the symmetric Frobenius algebra $F\in \cat{A}$ and the  map~\eqref{eqnsorting}
	 `sorting $F \otimes F$ into the coend'. In Hopf-algebraic 
	  examples, the latter part can be made more explicit: Suppose that $H$ is a pivotal finite-dimensional Hopf algebra, and $\cat{A}$ the category of finite-dimensional $H$-modules.
	Then $\mathbb{F}=\int^{X \in \cat{A}} X^\vee \otimes X$ (because $\otimes$ is exact), and we can make the following further statements that follow from
	\cite[Theorem~7.4.13]{kl}: The object $\mathbb{F}$ 
	 is given by the coadjoint representation $H^*_\text{coadj}$, which is the dual of the adjoint $H$-action on itself given by $h.x := h'x S(h'')$, where we have used the Sweedler notation for the coproduct of $H$. For a symmetric Frobenius algebra $F$ in $H$-modules, the map $F\otimes F \to H_\text{coadj}^*$ from
	  \eqref{eqnsorting} is given by
	  \begin{align}
	  F \otimes F \ni	x \otimes y \mapsto  \left( H\ni h \mapsto \psi(x)(h.y)  \right) \ , 
	  	\end{align} with the self-duality $\psi :F \to F^*$. 
	\end{example}

		\begin{remark}[Relation to the string-net construction of correlators in rational conformal field theory]\label{remsn}
			Let $\cat{C}$ be a pivotal finite tensor category. Then there is a string-net model $\SNC$ for the open-closed modular functor for the Drinfeld center $Z(\cat{C})$~\cite{sn}. The open part of $\SNC$ is equivalent to $\widehat{\cat{C}}$~\cite[Theorem~4.2]{envas}. If $\cat{C}$ is a modular fusion category, consistent systems of correlators for the modular functor $Z(\cat{C})$ have been constructed in~\cite{rcftsn} from special symmetric Frobenius algebras in $\cat{C}$ using string-net techniques. Theorem~\ref{thmopencorrelators} can be seen as a non-semisimple generalization of the open part of this construction (which does not need specialness because we have not discussed yet extensions beyond the open sector). Nonetheless, we should highlight that even though  Theorem~\ref{thmopencorrelators}, at least for pivotal finite tensor categories, should admit a string-net interpretation through the abstract comparison result \cite[Theorem~4.2]{envas}, it is largely unclear how this string-net picture for the open correlators in the non-semisimple situation would look like. One might see this as a selling point for the modular microcosm principle because it shows that it allows for constructions that otherwise are  not available in this generality. On the other hand, we lose, in comparison to the semisimple string-net techniques in~\cite{rcftsn} quite a bit of explicitness in the calculations because powerful ribbon graph techniques enter as a `black box'. 
			\end{remark}

	\spaceplease	
	\section{Classification of ansular correlators\label{secansular}}
	By \cite[Theorem~5.13]{cyclic} $\Lex$-valued
	cyclic algebras over the $\framed$-operad are equivalent to ribbon Grothendieck-Verdier categories as defined in~\cite{bd}, i.e.\ braided Grothendieck-Verdier categories $\cat{A}$ equipped with a natural automorphism $\theta_X:X\to X$ called \emph{balancing} such that $\theta_I=\id_I$, $\theta_{X \otimes Y}=c_{Y,X}c_{X,Y}(\theta_X \otimes \theta_Y)$ for $X,Y \in \cat{A}$, with $c_{X,Y}:X\otimes Y \ra{\cong} Y \otimes X$ being the braiding, such that additionally the compatibility $D\theta_X=\theta_{DX}$ with the Grothendieck-Verdier duality holds. As explained in~\cite{bd}, any ribbon Grothendieck-Verdier category has an underlying pivotal Grothendieck-Verdier category, i.e.\ it is an enhancement of the structure discussed in the previous two sections.
	If the Grothendieck-Verdier duality is rigid and the unit simple, we obtain the notion of a \emph{finite ribbon category}~\cite{egno}. 
	
	\begin{definition}\label{defcommsymFA}
	Let $\cat{A}$ be a ribbon Grothendieck-Verdier category in $\Lexf$. We define a \emph{symmetric commutative Frobenius algebra} in $\cat{A}$ as a symmetric Frobenius algebra in the underlying pivotal Grothendieck-Verdier category which is also 
		commutative with respect to the braiding.
\end{definition}
	
	\begin{proposition}\label{propribbonfrob}
		Cyclic framed $E_2$-algebras in cyclic framed $E_2$-algebras in $\Lexf$ are equivalent to symmetric commutative Frobenius algebras in ribbon Grothendieck-Verdier categories.
	\end{proposition}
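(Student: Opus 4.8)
The plan is to mirror the proof of Proposition~\ref{propsymfa}, exploiting that the cyclic framed $E_2$-operad is an enhancement of the cyclic associative operad. First I would note that, by \cite[Theorem~5.13]{cyclic}, a cyclic framed $E_2$-algebra in $\Lexf$ is exactly a ribbon Grothendieck-Verdier category $\cat{A}$, and that a cyclic framed $E_2$-algebra with coefficients in $\cat{A}$ (Definition~\ref{defmodalgcoeff} together with its cyclic analogue, Remark~\ref{remcyclicalg}) has an underlying self-dual object $F\in\cat{A}$ together with a monoidal natural transformation $k\to\vb_F^\cat{A}$. By Lemma~\ref{lemmaequivdes}, applied to the pivotal Grothendieck-Verdier category underlying $\cat{A}$, the self-duality of $F$ is a non-degenerate symmetric pairing $\beta:F\odot F\to I$ with associated copairing $\delta:K\to F\otimes F$; this gives point~(P) of Definition~\ref{defsymFA} and contributes nothing beyond the associative case.

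It then remains to extract what the transformation $k\to\vb_F^\cat{A}$ amounts to. For this I would use a presentation of the cyclic framed $E_2$-operad by generators and relations in the sense of \cite[Section~3.2]{cyclic} (as already used in the proof of \cite[Theorem~5.13]{cyclic}), together with the cyclic version of Remark~\ref{remgenopcomp} (see also Remark~\ref{remcyclicalg}), which reduces $\xi$ to its components on generating operations and the relations imposed on the generating isomorphisms and morphisms. Relative to the presentation of $\As$ used in Proposition~\ref{propsymfa}, the cyclic framed $E_2$-operad has the same generating operations, a binary multiplication and a nullary unit, which via the chain of isomorphisms \eqref{eqnkappaF} yield a multiplication $\mu:F\odot F\to F$ (point~(M)) and a unit $\eta:K\to F$ (point~(U)); the associator, unitor and invarianciator relations reproduce associativity, unitality and the invariance condition~(I) exactly as before. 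The genuinely new ingredients of the presentation are a generating morphism implementing the braiding and the Dehn twist (balancing), subject to the hexagon and ribbon relations. Naturality of $\xi$ with respect to the braiding morphism connecting the binary operation with its transpose forces $\mu=\mu\circ c_{F,F}$, where $c$ is the braiding induced on $(\cat{A},\odot)$, i.e.\ braided commutativity of $\mu$; the Dehn twist generator, evaluated with $F$ inserted into every slot, reproduces $\theta_F$ and imposes only compatibilities that already hold in the ribbon Grothendieck-Verdier category $\cat{A}$ (essentially the naturality of $\theta$ applied to the self-duality isomorphism $F\cong DF$), hence no condition on $F$; and the hexagon relations become automatic once $\mu$ is braided commutative. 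Collecting everything, $\xi$ amounts precisely to the data (M), (U), (P), (I) of Definition~\ref{defsymFA} together with braided commutativity, i.e.\ to a symmetric commutative Frobenius algebra in the sense of Definition~\ref{defcommsymFA}; conversely any such algebra reassembles into a cyclic framed $E_2$-algebra with coefficients in $\cat{A}$, and one checks as usual (cf.\ the proof of Proposition~\ref{propsymfa}) that these assignments are mutually inverse and functorial, giving the claimed equivalence of groupoids.

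The step I expect to be the main obstacle is the careful bookkeeping of the presentation of the cyclic framed $E_2$-operad: one must verify that the Dehn twist generator and the coherence isomorphisms governing its interaction with the braiding and with the cyclic rotation contribute no condition on $F$ beyond what is already encoded in the ribbon structure of $\cat{A}$, so that braided commutativity is genuinely the only new requirement produced by the microcosm principle. A secondary point requiring care is the translation between the multiplication as a map $F\odot F\to F$ and commutativity with respect to the braiding on the second monoidal product $\odot$ of \eqref{eqn2ndmonoidalproduct}; this is handled exactly as the analogous translations in Proposition~\ref{propsymfa}, using that in a braided Grothendieck-Verdier category the product $\odot$ inherits a braiding from $\otimes$.
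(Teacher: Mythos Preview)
Your proposal is correct and follows essentially the same strategy as the paper: reduce to Proposition~\ref{propsymfa}, then analyze the additional generating isomorphisms of the cyclic framed $E_2$-operad (braiding and balancing), concluding that the braiding contributes braided commutativity while the balancing contributes nothing new.

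The one noteworthy difference is in how the balancing is dispatched. You argue directly that the Dehn twist generator, when evaluated at $F$, imposes only compatibilities already forced by the ribbon structure of $\cat{A}$; you correctly flag this bookkeeping as the main obstacle. The paper sidesteps this entirely by invoking \cite[Corollary~8.3]{bd} (see also \cite[Lemma~5.10]{cyclic}): a ribbon Grothendieck-Verdier category is equivalently a braided Grothendieck-Verdier category with pivotal structure satisfying a \emph{property}, so the balancing is not an independent generating isomorphism in the presentation at all and hence contributes no relation. This is cleaner than your direct verification and removes precisely the obstacle you anticipated.
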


	\begin{proof}
		Cyclic framed $E_2$-algebras in $\Lexf$ are ribbon Grothendieck-Verdier categories in $\Lexf$ by \cite[Theorem~5.13]{cyclic}. This means that in comparison to 
		the situation of Proposition~\ref{propsymfa} we have an additional generating isomorphism, namely the braiding. This gives us one additional relation for cyclic framed $E_2$-algebras in a ribbon Grothendieck-Verdier category, namely the braided commutativity in Definition~\ref{defcommsymFA}. A priori, we have another generating isomorphism, namely the balancing, but a ribbon Grothendieck-Verdier category can be described as a braided Grothendieck-Verdier category with pivotal structure that is subject to a property (this is \cite[Corollary~8.3]{bd}, see also \cite[Lemma~5.10]{cyclic}). Therefore, there is no additional relation through the balancing.
	\end{proof}

	\begin{remark}\label{remtrivialtwist}
		Even though it was implicitly part of the proof of Proposition~\ref{propribbonfrob}, let us mention explicitly:
		For a symmetric commutative Frobenius algebra $F$ in a ribbon Grothendieck-Verdier category, the triviality of the balancing ($\theta_F=\id_F$) is automatic. This is a generalization of~\cite[Proposition~2.25]{correspondences}.
		\end{remark}

	By replacing in the definition of the surface operad all surfaces with three-dimensional handlebodies (as always compact oriented; instead of boundary components, we consider disks embedded in the boundary surface of the handlebody), one obtains the modular operad $\Hbdy$ of handlebodies, see \cite[Section~4.3]{giansiracusa} and \cite[Section~7.2]{cyclic} for more details. A ($\Lexf$-valued) $\Hbdy$-algebra is called \emph{ansular functor}, and by the main result of~\cite{mwansular} building on~\cite{giansiracusa,cyclic,mwdiff} the restriction to genus zero produces an equivalence to cyclic framed $E_2$-algebras:
	\begin{equation}\label{eqnequivcycframed0}
		\begin{tikzcd}
			\catf{CycAlg}(\framed) \ar[rrrr, shift left=2,"\text{modular extension}"] &&\simeq&& \ar[llll, shift left=2,"\text{restriction}"] 
			\catf{ModAlg}\left( \Hbdy \right) \quad \text{for \emph{bicategorical} algebras.} 
		\end{tikzcd}
	\end{equation}

		\begin{definition}[Ansular correlator]
		Let $\mathfrak{B}$ be an 
		ansular functor with values in $\Lexf$. 
		A \emph{consistent system of ansular correlators} for $\mathfrak{B}$ is a modular $\Hbdy$-algebra with coefficients in the ansular functor $\mathfrak{B}$.
	\end{definition}

	\begin{corollary}\label{corext}
		For any cyclic framed $E_2$-algebra $\cat{A}$ in $\cat{S}$, there is an equivalence
		\begin{equation}\label{eqnequivcycframed}
			\begin{tikzcd}
				\catf{CycAlg}(\framed;\cat{A}) \ar[rrrr, shift left=2,"\text{extension}"] &&\simeq&& \ar[llll, shift left=2,"\text{restriction}"] 
				\catf{ModAlg}\left( \Hbdy; \widehat{\cat{A}}\right) \ . 
			\end{tikzcd}
		\end{equation} 
	\end{corollary}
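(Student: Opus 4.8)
The plan is to obtain Corollary~\ref{corext} as a direct instance of Theorem~\ref{exttheorem}, in exactly the same way that Corollary~\ref{corext-o} was deduced from it. First I would specialize Theorem~\ref{exttheorem} to the cyclic framed $E_2$-operad, i.e.\ take $\cat{O}=\framed$. Since $\cat{A}$ is by hypothesis a cyclic framed $E_2$-algebra in $\cat{S}$, this immediately yields a canonical pair of inverse equivalences between $\catf{CycAlg}(\framed;\cat{A})$ and $\catf{ModAlg}\bigl(\Pi|B\Envint\framed|;\widehat{\cat{A}}\bigr)$, where $\widehat{\cat{A}}$ is the modular extension of $\cat{A}$ to $\Envint\framed$ (equivalently to $\Pi|B\Envint\framed|$, since the extension sends all morphisms to isomorphisms).

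The remaining step is to replace $\Pi|B\Envint\framed|$ by the handlebody operad $\Hbdy$. For this I would invoke the modular-envelope computation underlying~\eqref{eqnequivcycframed0}: by~\cite{mwansular}, building on~\cite{giansiracusa,cyclic,mwdiff}, there is an equivalence of $\Grpd$-valued modular operads $\Pi|B\Envint\framed|\ra{\simeq}\Hbdy$. An equivalence of modular operads induces an equivalence between the corresponding categories of modular algebras with fixed coefficients, so that $\catf{ModAlg}\bigl(\Pi|B\Envint\framed|;\widehat{\cat{A}}\bigr)\simeq\catf{ModAlg}\bigl(\Hbdy;\widehat{\cat{A}}\bigr)$; composing this with the equivalence coming from Theorem~\ref{exttheorem} produces the asserted equivalence~\eqref{eqnequivcycframed}, with its extension and restriction functors.

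The only point that requires care — and hence the main (though mild) obstacle — is matching up the two occurrences of $\widehat{\cat{A}}$: the modular extension produced abstractly through the modular envelope $\Envint\framed$ must agree, under $\Pi|B\Envint\framed|\simeq\Hbdy$, with the ansular functor attached to $\cat{A}$ in~\eqref{eqnequivcycframed0}. I expect this to require no new argument, since that is precisely how the ansular functor is \emph{defined} in~\cite{mwansular}; one only has to observe that the restriction functor $J^*$ of Theorem~\ref{exttheorem} is carried to genus-zero restriction on the handlebody side, which follows from the naturality of $\Pi|B-|$ together with the fact that both equivalences are built from the very same modular envelope. Everything else is formal.
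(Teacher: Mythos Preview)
Your proposal is correct and follows essentially the same approach as the paper: apply Theorem~\ref{exttheorem} with $\cat{O}=\framed$ and then use the equivalence $\Pi|B\Envint\framed|\ra{\simeq}\Hbdy$ from~\cite[Theorem~5.2]{mwansular}. Your additional remark about matching the two notions of $\widehat{\cat{A}}$ is a reasonable sanity check, but the paper treats this as implicit since the ansular functor is by definition the modular extension transported along this equivalence.
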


	\begin{proof}
		The equivalence~\eqref{eqnequivcycframed} follows from  Theorem~\ref{exttheorem} 
		about micrcosmic extension
		if we additionally use 
		$\Pi |B\Envint \framed| \ra{\simeq} \Hbdy$
		from~\cite[Theorem~5.2]{mwansular}
		(building as mentioned on~\cite{giansiracusa,cyclic,mwdiff}).
	\end{proof}

		\begin{theorem}[Classification of ansular correlators]\label{thmansularcor}
		The consistent systems of
	ansular correlators for an ansular
	functor are exactly symmetric commutative
	Frobenius algebras $F\in\cat{A}$
	in the ribbon Grothendieck-Verdier category $\cat{A}$ obtained from the 
	ansular functor by genus zero restriction.
	More explicitly,
	the ansular correlator $\xi^F$
	associated
	to 	a symmetric commutative Frobenius algebra $F$ in $\cat{A}$
	amounts to
	vectors $\xi_H^F \in \widehat{\cat{A}}(H;F,\dots,F)$ in the spaces of conformal blocks associated
	to handlebodies $H$
	that are handlebody group invariant and solve the constraints for sewing along disks embedded in the boundary surface of the handlebodies.
	\end{theorem}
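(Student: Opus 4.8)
The plan is to follow the proof of Theorem~\ref{thmopencorrelators} step for step, with the associative operad replaced by the framed $E_2$-operad and the open surface operad replaced by the modular handlebody operad $\Hbdy$. First I would invoke the main result of~\cite{mwansular}: every ansular functor $\mathfrak{B}$ in $\Lexf$ is, up to canonical equivalence, the modular extension $\widehat{\cat{A}}$ of its genus zero restriction, and this restriction is a cyclic framed $E_2$-algebra in $\Lexf$, i.e.\ a ribbon Grothendieck-Verdier category $\cat{A}$ by~\cite[Theorem~5.13]{cyclic}. Hence the consistent systems of ansular correlators for $\mathfrak{B}$ are exactly the objects of $\catf{ModAlg}\left(\Hbdy;\widehat{\cat{A}}\right)$.

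The first assertion then follows by composing the two equivalences already available:
\begin{align}
	\catf{ModAlg}\left( \Hbdy; \widehat{\cat{A}}\right) \stackrel{\text{Corollary~\ref{corext}}}{\simeq}\catf{CycAlg}\left( \framed; \cat{A}\right) \stackrel{\text{Proposition~\ref{propribbonfrob}}}{\simeq} \mathsf{symmetric\ commutative\ Frobenius\ algebras\ in} \ \cat{A} \ .
\end{align}
Here Corollary~\ref{corext} is the instance of the microcosmic modular extension Theorem~\ref{exttheorem} for $\cat{O}=\framed$, combined with the identification $\Pi|B\Envint\framed|\simeq\Hbdy$; and Proposition~\ref{propribbonfrob} identifies the microcosm of a cyclic framed $E_2$-algebra inside a ribbon Grothendieck-Verdier category with a symmetric commutative Frobenius algebra. (One may additionally note, via Remark~\ref{remtrivialtwist}, that the balancing of such an $F$ is automatically trivial.)

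For the explicit description of the ansular correlator $\xi^F$ attached to a symmetric commutative Frobenius algebra $F\in\cat{A}$, I would unwind these equivalences in the spirit of Section~\ref{seccalc}: the underlying self-dual object is $F$ together with its non-degenerate symmetric pairing, and on a handlebody $H$ cut into genus zero pieces the component $\xi_H^F\in\widehat{\cat{A}}(H;F,\dots,F)$ is assembled from the genus zero operations (the multiplication $\mu$, the unit $\eta$ and the copairing $\delta$) through the gluing maps of the modular extension, inserting the coevaluation object $\Delta=\int^{X\in\cat{A}}DX\boxtimes X$ into the arguments where self-gluings occur, exactly as in~\cite[Example~7.3]{cyclic}. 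Naturality of $\xi$ together with Remark~\ref{remgenopcomp} (in its cyclic form, cf.~Remark~\ref{remcyclicalg}) forces these vectors; in particular they are handlebody group invariant and compatible with sewing along embedded disks, and conversely every such system arises this way.

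The argument is essentially formal once the cited results are granted. The step that needs the most care --- and which I expect to be the main obstacle --- is making the last, explicit description precise for handlebodies of positive genus: one must track how the iterated self-compositions of $\Hbdy$ translate, under $\Pi|B\Envint\framed|\simeq\Hbdy$, into repeated insertions of $\Delta$ and applications of the symmetric 2-pairing $\beta:F\boxtimes F\to\Delta$. This is precisely the place where the ribbon-graph machinery behind~\cite{giansiracusa,mwansular} enters as a black box, so the attainable explicitness is bounded by what that machinery delivers; no argument beyond Theorem~\ref{exttheorem} and Proposition~\ref{propribbonfrob} is needed.
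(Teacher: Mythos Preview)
Your proposal is correct and follows essentially the same approach as the paper: the paper's proof simply states that it is analogous to the proof of Theorem~\ref{thmopencorrelators}, replacing Corollary~\ref{corext-o} and Proposition~\ref{propsymfa} by Corollary~\ref{corext} and Proposition~\ref{propribbonfrob}, which is exactly the chain of equivalences you wrote down. Your additional paragraphs unwinding the explicit description and flagging the ribbon-graph black box are more detailed than the paper's one-sentence proof but entirely in line with it.
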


	\begin{proof}
		The proof is by assembly of the results that we have already proven and completely analogous to the proof of Theorem~\ref{thmopencorrelators}, but this time we apply the microcosmic extension to the situation in Corollary~\ref{corext}
		 and Proposition~\ref{propribbonfrob} instead.
	\end{proof}

\begin{remark}\label{remgenuszerocorrelators}
	The cyclic framed $E_2$-operad is equivalent to the cyclic genus zero surface operad (or equivalently the cyclic genus zero handlebody operad). This is implicit in \cite{salvatorewahl,giansiracusa}, see \cite[Proposition 5.3]{cyclic} for an explicit statement for the groupoid-valued version.
	With the connection to correlators explained in the introduction
	(but this time just applied to genus zero), this
	 means that for a cyclic framed $E_2$-algebra $\cat{A}$ in a symmetric monoidal bicategory, the cyclic framed $E_2$-algebras in $\cat{A}$
are exactly genus zero correlators for $\cat{A}$ in the sense \cite[Section~4.2]{jfcs}.
Theorem~\ref{thmansularcor} now tells us that, for any cyclic framed $E_2$-algebra $\cat{A}$, genus zero correlators for $\cat{A}$
and ansular correlators for $\widehat{\cat{A}}$ are equivalent.
	\end{remark}

\begin{example}\label{examplesymcomfa}
	If $\cat{A}$ is a unimodular finite ribbon category, then $B=\int^{X \in \cat{A}} X^\vee \boxtimes X$ can be equipped with the structure of a symmetric commutative Frobenius algebra in $\bar{\cat{A}}\boxtimes\cat{A}$, where $\bar{\cat{A}}$ is $\cat{A}$ with the same monoidal product, but with braiding and balancing inverted, see~\cite[Proposition~2.3]{fuchsschweigertstignerhopf} and~\cite[Section~2]{cardycase}.
	Via the braided monoidal functor $\bar{\cat{A}}\boxtimes\cat{A} \to Z(\cat{A})$ to the Drinfeld center of $\cat{A}$~\cite[Section~4]{eno-d}, we can turn $B$ into a symmetric commutative Frobenius algebra $\mathbb{F}=\int^{X \in \cat{A}} X^\vee \otimes X$ in $Z(\cat{A})$. The underlying algebra structure was already described in~\cite[Section~2]{lyu}. The braided commutative algebra in $Z(\cat{A})$ appears in~\cite[Definition~4 \& Proposition~5]{neuchlschauenburg}. The construction of symmetric commutative Frobenius algebras in Drinfeld centers can be vastly generalized using the theory of pivotal module categories and internal endotransformations~\cite[Corollary~39]{internal}.
	\end{example}

\begin{example}
	Suppose that $\cat{A}$ is a modular category.
	Then the end $\mathbb{A}=\int_{X \in \cat{A}} X^\vee \otimes X$ becomes an algebra in $\cat{A}$ by applying component-wise the evaluations, even a Frobenius algebra, see
	\cite[Lemma~3.5]{dmno} and \cite[Theorem~6.1~(4)]{shimizuunimodular}. 
	But generally no structure of an ansular correlator on $\mathbb{A}$ exists. 
	Indeed, if $\cat{A}$ is the category of finite-dimensional modules over the small quantum group
	$\bar U_q (\mathfrak{sl}_2)$, where $q = \exp (2\pi \text{i}/r)$ with odd integer $r\ge 3$, it is shown in \cite[Example~6.5]{mwdehn} using the results of \cite[Section~5.2]{gai2} that the balancing $\theta_\mathbb{A}$ is not the identity because the action of some elements in the Johnson kernel on the spaces of conformal blocks for closed surfaces is not trivial. This implies thanks to Remark~\ref{remtrivialtwist} that $\mathbb{A}$ cannot give rise to an ansular correlator. 
	\end{example}

If  $\cat{A}$ is a finite ribbon category,
consistent systems of ansular correlators \emph{in genus zero} 
agree exactly with the consistent systems of genus zero correlators that are being classified by symmetric commutative Frobenius algebras in~\cite[Proposition~4.7]{jfcs}. 
Therefore, Theorem~\ref{thmansularcor} immediately implies:

\begin{corollary}
	Each consistent system of
	 genus zero correlators for a finite ribbon category in the sense of Fuchs-Schweigert 
	 extends uniquely to a consistent system of ansular correlators. 
	\end{corollary}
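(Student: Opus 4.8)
The plan is to deduce this from Theorem~\ref{thmansularcor} together with~\cite[Proposition~4.7]{jfcs}, as indicated in the paragraph above. First I would invoke Remark~\ref{remgenuszerocorrelators}: for a finite ribbon category $\cat{A}$, a consistent system of genus zero correlators in the sense of Fuchs-Schweigert is precisely a cyclic framed $E_2$-algebra in $\cat{A}$, i.e.\ an object of $\catf{CycAlg}(\framed;\cat{A})$, and by~\cite[Proposition~4.7]{jfcs} these are classified by symmetric commutative Frobenius algebras in $\cat{A}$. On the other hand, Theorem~\ref{thmansularcor} classifies consistent systems of ansular correlators for the ansular functor $\widehat{\cat{A}}$, i.e.\ objects of $\catf{ModAlg}(\Hbdy;\widehat{\cat{A}})$, by the \emph{same} symmetric commutative Frobenius algebras in $\cat{A}$.

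Next I would match the two classifications along the genus zero restriction functor $\catf{ModAlg}(\Hbdy;\widehat{\cat{A}}) \to \catf{CycAlg}(\framed;\cat{A})$ of Corollary~\ref{corext}. By Proposition~\ref{propribbonfrob} the target groupoid is itself the groupoid of symmetric commutative Frobenius algebras in $\cat{A}$, and one checks, by unwinding Section~\ref{seccalc}, that the identification used there coincides with the one of~\cite[Proposition~4.7]{jfcs}: the disk-with-intervals components of an ansular correlator recover the multiplication, unit and counit of the Frobenius algebra, which is exactly the data out of which the Fuchs-Schweigert genus zero correlator is built. Consequently, the pair of inverse equivalences of Corollary~\ref{corext} --- a special case of Theorem~\ref{exttheorem} --- shows that the extension direction assigns to each Fuchs-Schweigert genus zero correlator a canonical consistent system of ansular correlators whose genus zero restriction is the given correlator, and that this extension is unique up to a unique isomorphism.

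Since the substantive content is already carried by Theorem~\ref{exttheorem}, Theorem~\ref{thmansularcor} and~\cite[Proposition~4.7]{jfcs}, I do not expect any real obstacle here. The only point requiring care --- and it is the one made in the paragraph preceding the statement --- is to check that the genus zero restriction functor is compatible with the two a priori independently defined identifications with symmetric commutative Frobenius algebras; this is a routine comparison of definitions, made transparent by the explicit description of the correlators in Section~\ref{seccalc}.
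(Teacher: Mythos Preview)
Your proposal is correct and follows essentially the same approach as the paper: the paper simply notes in the preceding paragraph that genus zero ansular correlators agree with the Fuchs--Schweigert genus zero correlators via \cite[Proposition~4.7]{jfcs}, and then states that Theorem~\ref{thmansularcor} immediately implies the corollary. Your write-up is a more detailed unpacking of the same inference, invoking in addition Corollary~\ref{corext} and Proposition~\ref{propribbonfrob} to make the matching explicit.
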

	 
	Let us briefly highlight why this is actually a non-obvious statement:
	For a handlebody $H_g$ of genus $g$,   \cite[Proposition~4.7]{jfcs} easily gives us the invariance under Dehn twists. But unlike for mapping class group of surfaces, handlebody groups are not generated by Dehn twists. Instead, the Dehn twists generate a normal subgroup $\catf{Tw}(H_g)\subset \Map(H_g)$, the \emph{twist group}, that fits into a short exact sequence
	 \begin{align}
	 	1 \to \catf{Tw}(H_g)\to\Map(H_g)\to\catf{Out}(F_g)\to 1 \ , \label{eqnluft}
	 \end{align} where $\catf{Out}(F_g)$ is the group of outer automorphisms of the free group $F_g$ on $g$ generators, see \cite{luft}. In other words, $\Map(H_g)$ is much larger than $\catf{Tw}(H_g)$.

	 In a finite ribbon category $\cat{A}$, the monoidal unit $I$ is obviously a symmetric commutative Frobenius algebra. Theorem~\ref{thmansularcor} then implies:
	
	\begin{corollary}\label{corpointing}
		Let $\cat{A}$ be a finite ribbon category.
		Then the spaces of conformal blocks $\widehat{\cat{A}}(H)$ of the ansular functor for $\cat{A}$ evaluated on 
		 handlebodies $H$ without embedded disks come
		with a distinguished 
		$\Map(H)$-invariant non-zero
		vector $\xi_H \in \widehat{\cat{A}}(H)$. 
		\end{corollary}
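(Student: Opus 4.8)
The plan is to obtain $\xi_H$ by applying Theorem~\ref{thmansularcor} to the symmetric commutative Frobenius algebra given by the monoidal unit of $\cat{A}$, and then to prove that the resulting correlator is non-zero.

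First I would check that $I\in\cat{A}$ carries the structure of a symmetric commutative Frobenius algebra in the sense of Definition~\ref{defcommsymFA}. As $\cat{A}$ is a finite ribbon category, its duality is rigid, so $\odot$ agrees with $\otimes$ and the dualizing object is $K=DI\cong I$. One takes the multiplication $\mu\colon I\odot I\cong I\to I$, the unit $\eta\colon K\cong I\to I$, the pairing $\beta\colon I\odot I\cong I\to I$ and the copairing $\delta\colon K\cong I\to I\otimes I\cong I$ to be the coherence isomorphisms. Associativity, unitality and the invariance condition then hold for coherence reasons; non-degeneracy of $\beta$ is witnessed by $\delta$ via the unit zigzag; symmetry of $\delta$ is automatic since $\cat{A}(K,I\otimes I)\cong k$ is one-dimensional and the induced $\mathbb{Z}_2$-action is trivial; and braided commutativity holds because $c_{I,I}=\id_I$. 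By Theorem~\ref{thmansularcor}, $I$ thus determines a consistent system of ansular correlators $\xi^I$, i.e.\ $\Map(H)$-invariant vectors $\xi_H:=\xi^I_H\in\widehat{\cat{A}}(H;I,\dots,I)=\widehat{\cat{A}}(H)$ --- no boundary labels occur, $H$ having no embedded disks --- that are compatible with the gluing of disks embedded in the boundary surfaces.

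It remains to establish $\xi_H\neq 0$. I would describe the vectors explicitly through the coend description of the modular extension $\widehat{\cat{A}}$, i.e.\ the handlebody analogue of the computation carried out in Section~\ref{seccalc} (cf.\ \cite[Example~7.3]{cyclic}): writing a genus-$g$ handlebody without embedded disks as the result of $g$ self-gluings of the genus-zero handlebody with $2g$ embedded disks, on which the correlator for $F=I$ is (up to canonical isomorphism) $\id_I$, one gets $\widehat{\cat{A}}(H)\cong\cat{A}(K,\mathbb{F}^{\otimes g})$ with $\mathbb{F}=\int^{X\in\cat{A}}DX\otimes X$, and $\xi_H$ becomes identified, up to coherence isomorphisms and braidings of $\mathbb{F}^{\otimes g}$, with $u^{\otimes g}\colon I\cong I^{\otimes g}\to\mathbb{F}^{\otimes g}$, where $u\colon I\to\mathbb{F}$ is the component at the monoidal unit of the universal dinatural family, equivalently the unit of the canonical algebra $\mathbb{F}$. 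For $g=0$ this is $\id_I\neq 0$. For $g\ge 1$ the key observation is that $u$ is a split monomorphism: the evaluations $\{\mathrm{ev}_X\colon DX\otimes X\to I\}_X$ form a cowedge, hence induce a morphism $r\colon\mathbb{F}\to I$ whose composite with $u$ is invertible. Therefore $u^{\otimes g}$ is a split monomorphism, and $\xi_H$ --- which differs from it by an automorphism of $\mathbb{F}^{\otimes g}$ --- is in particular non-zero.

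The step I expect to be the main obstacle is controlling this explicit description: the genus-by-genus gluing bookkeeping, in particular the interplay with the braidings of $\mathbb{F}^{\otimes g}$ for $g\ge 2$, is most conveniently organized through the multiplicative structures on spaces of conformal blocks built in Section~\ref{secmult}. Under these, $\bigoplus_{g}\widehat{\cat{A}}(H)$ becomes a graded algebra with unit $\xi_{B^3}$ in which $\xi_H$ is the $g$-fold product of the genus-one vector, and the maps $r^{\otimes g}$ are realized by a single augmentation; this is also the machinery underlying the related non-irreducibility statement of Theorem~\ref{thmnotrrep}. Heuristically, $\xi_H$ is the non-semisimple avatar of the empty skein, which is manifestly non-zero in the semisimple case (Remark~\ref{rememptyskein}); the work lies in making this precise without semisimplicity.
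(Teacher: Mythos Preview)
Your proof is correct and follows the paper's approach closely: apply Theorem~\ref{thmansularcor} to the monoidal unit $I$ (rigidity ensures $I$ is a symmetric commutative Frobenius algebra), then for non-zeroness identify $\xi_{H_g}$ with $u^{\otimes g}\in\cat{A}(I,\mathbb{F}^{\otimes g})$, where $u:I\to\mathbb{F}$ is the Lyubashenko unit, via the multiplicative structure of Section~\ref{secmult} --- exactly as the paper does through Proposition~\ref{propnonzero} and Corollary~\ref{corthmstar}. The one genuine difference is the last step: the paper concludes $u^{\otimes g}\neq 0$ from the injectivity of the $\star$-products (Proposition~\ref{propnonzero0}, which unwinds to injectivity of $\otimes$ on hom-spaces via simplicity of $I$, citing \cite[Lemma~4.6]{mwdehn}), whereas you observe that $u$ is a split monomorphism, retracted by the map $r:\mathbb{F}\to I$ induced from the evaluations $X^\vee\otimes X\to I$, so $u^{\otimes g}$ is split mono and hence non-zero. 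Your argument is a clean shortcut that uses only rigidity rather than simplicity of $I$; the paper's injectivity statement, however, is stronger and is reused later for Lemma~\ref{lemmadim} and Theorem~\ref{thmnotrrep}.
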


	\begin{proof}
		We apply Theorem~\ref{thmansularcor} to the symmetric commutative Frobenius algebra $I \in \cat{A}$ (this needs rigidity).
		The fact that the vectors $\xi_H$ are non-zero is not obvious. We prove this in Proposition~\ref{propnonzero} below.
		\end{proof}

		\begin{remark}[Relation to skein theory and the empty skein]
			If $\cat{A}$ is a ribbon fusion category, in particular semisimple, then $\widehat{\cat{A}}(H)$ can be seen as the classical handlebody skein module~\cite{roberts,masbaumroberts}, see~\cite[Section~1.7]{brochierwoike} for the connection.
		Then $\xi_H$ corresponds to the \emph{empty skein}. From this perspective, Corollary~\ref{corpointing} produces a non-semisimple analogue of the empty skein, purely from the microcosm principle.
		It should be pointed out that this construction of the non-semisimple version of the `empty skein' is really non-trivial. 
		For the moment, there is no skein-theoretic description of $\widehat{\cat{A}}(H)$  available.
		In work in progress, we will make a connection to the admissible skeins in~\cite{asm}, see~\cite{brownhaioun} for the connection to factorization homology. 
		But even then, the skein-theoretic methods would just be applied to the tensor ideal of projective objects (to which the monoidal unit does not belong in the non-semisimple case), therefore making the notion of an empty skein still somewhat tricky.
		\label{rememptyskein}
	\end{remark}
	
\begin{remark}	The invariance property of the vectors in Corollary~\ref{corpointing} under $\Map(H)$
	can generally not be extended to an invariance under the mapping class group $\Map(\partial H)$ (of which $\Map(H)$ is a subgroup): If $\cat{A}$ is modular, the handlebody group action on $\widehat{\cat{A}}(H)$ extends to a projective action of the mapping class group of the surface $\partial H$ through the construction of~\cite{lyubacmp,lyu,lyulex}, see \cite{cyclic,brochierwoike} for the relation to ansular functors.
	But 
	unless $\cat{A}\simeq \vect$,
	the consistent systems of  ansular correlators from Corollary~\ref{corpointing} will not have the 
	 invariance property for the action of the mapping class groups of $\partial H$.
	This is a consequence of \cite[Remark~4.12~(i)]{jfcs}.
	\end{remark}
	
	\begin{remark}[The correspondence between two-dimensional full conformal field theory in genus zero and three-dimensional skein theory]\label{rem2d3d}
		One of the main ideas of the works~\cite{frs1,frs2,frs3,frs4,ffrs} is to solve problems in two-dimensional rational conformal field theory through three-dimensional topological field theory. One then needs to deal with a higher-dimensional field theory that however behaves in a much simpler way.
		The attentive reader might have noticed that Theorem~\ref{thmansularcor} goes into a somewhat similar direction even though the three-dimensional topological structure that we encounter is much weaker (but in exchange applies to vastly more general situations).
		Let us make this precise: Suppose that we are given a conformal field theory in genus zero, namely
		monodromy data $\cat{A}$ (a ribbon Grothendieck-Verdier category) plus a consistent system of correlators $F$
		(a symmetric commutative Frobenius algebra in $\cat{A}$).
		 Corollary~\ref{corext} and Theorem~\ref{thmansularcor} imply that $F$ extends uniquely to a modular $\Hbdy$-algebra with coefficients in the ansular functor $\widehat{\cat{A}}$, thereby producing vectors in the spaces on conformal blocks $\widehat{\cat{A}}(H;F,\dots,F)$ associated to handlebodies $H$. But the $\widehat{\cat{A}}(H;-)$ are by \cite[Section~4.3]{brochierwoike} exactly the generalized skein modules on which the skein algebra defined in terms of factorization homology~\cite[Definition~2.6]{skeinfin} acts. 
		Phrased differently, the genus zero conformal field theory $(\cat{A},F)$ might not admit a description in terms of three-dimensional topological field theory, but there is always
		--- for purely topological reasons ---
		a description in terms of three-dimensional handlebody skein modules defined in terms of factorization homology.
		\end{remark}

	\section{A multiplicative invariant for modular algebras\label{secmult}}
	The corolla $T_1$ with two legs is sent by any $\Grpd$-valued modular operad $\cat{O}:\Graphs \to \Cat$ to the groupoid $\cat{O}(T_1)$ of unary operations. The line with two vertices on it gives us a morphism $T_1 \sqcup T_1 \to T_1$ endowing $\cat{O}(T_1)$ with the structure of a $\Grpd$-valued algebra (that can be seen as a topological algebra if needed), the \emph{algebra $\mathfrak{A}_\cat{O}$ of unary operations}~\cite[Definition~3.5]{mwansular}. 
	Its unit is the operadic identity. The $\mathbb{Z}_2$-action coming from the cyclic symmetry endows $\mathfrak{A}_\cat{O}$ with an anti involution of algebras.  We denote the multiplication in $\mathfrak{A}_\cat{O}$ by $\star$. It makes $\mathfrak{A}_\cat{O}$ into a monoidal category.
	
	\begin{theorem}\label{thmringR}
		Let $\cat{B}$ be an $\cat{S}$-valued modular $\cat{O}$-algebra. Then for any self-dual object $F$ in $\cat{B}$, 
		\begin{align}
			R: \mathfrak{A}_\cat{O} \to \cat{S}(I,I) \ , \quad o \mapsto R_o := \cat{B}_o (F,F)
			\end{align}
		is
		a lax symmetric monoidal functor. This means in particular: \begin{pnum}
			\item We have products \label{thmringRi}
		\begin{align}
			\star : R_o \otimes R_{o'} \to R_{o\star o'}
			\end{align} graded over unary operations, associative and unital relative to the associators and unitors of $\mathfrak{A}_\cat{O}$ and $\cat{S}(I,I)$, and natural in $o, o' \in \mathfrak{A}_\cat{O}$.
			\item Every $R_o$ is an $R_1$-module, where $1 \in \mathfrak{A}_\cat{O}$ is the unit, and the $\Aut(o)$-action on $R_o$ is through $R_1$-module maps.\label{thmringRii}
			\item The operations  $R_o \otimes R_{o'} \to R_{o\star o'}$ are $\Map(o) \times \Map(o')$-equivariant. \label{thmringRiii}\end{pnum}
		\end{theorem}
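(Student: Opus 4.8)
The plan is to recognize $R$ as the restriction of the symmetric monoidal functor $\vb_F^\cat{B}:\int\cat{O}\to\cat{S}(I,I)$ from Proposition~\ref{propinsertionbundle} along a `doubling' functor, and to read off the lax symmetric monoidal structure from the symmetric monoidality of $\vb_F^\cat{B}$. Concretely, let $D:\mathfrak{A}_\cat{O}\to\int\cat{O}$ be the functor $o\mapsto (T_1,o)$ sending an isomorphism $\phi:o\to o'$ in $\cat{O}(T_1)$ to $(\id_{T_1},\phi)$; this is a well-defined functor. Unwinding Proposition~\ref{propinsertionbundle} gives $\vb_F^\cat{B}(T_1,o)=\cat{B}_o\circ (F\boxtimes F)=\cat{B}_o(F,F)=R_o$, and on isomorphisms $\vb_F^\cat{B}(\id_{T_1},\phi)$ is the map induced by $\phi$ (no collapsed edges, so $\beta$ does not enter); hence $R=\vb_F^\cat{B}\circ D$ as plain functors. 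In particular $R$ is functorial, which already yields the $\Aut(o)$-actions on the spaces $R_o$.

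To equip $R$ with a lax symmetric monoidal structure, I would use the `line with two vertices' morphism $m:T_1\sqcup T_1\to T_1$ in $\Graphs$ that defines the multiplication of $\mathfrak{A}_\cat{O}$: by construction $\cat{O}(m)(o\boxtimes o')=o\star o'$, so $m$ gives a morphism $(T_1\sqcup T_1,o\boxtimes o')\to(T_1,o\star o')$ in $\int\cat{O}$, natural in $o$ and $o'$. Applying $\vb_F^\cat{B}$ and invoking its strong monoidality provides natural maps
\begin{equation}
R_o\otimes R_{o'}\ \cong\ \vb_F^\cat{B}(T_1\sqcup T_1,o\boxtimes o')\ \xrightarrow{\vb_F^\cat{B}(m)}\ \vb_F^\cat{B}(T_1,o\star o')\ =\ R_{o\star o'},
\end{equation}
which are the graded products of~\ref{thmringRi}; their naturality in $(o,o')$ is precisely the equivariance of~\ref{thmringRiii}. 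For the unit of $R$ I would take the $2$-copairing $\delta:\id_I\to\kappa(F,F)\cong\cat{B}_{1_\cat{O}}(F,F)=R_1$ of the self-dual object $F$, using the isomorphism $\cat{B}_{1_\cat{O}}\cong\kappa$ that is part of the modular $\cat{O}$-algebra structure on $\cat{B}$.

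It then remains to check the coherence axioms of a lax symmetric monoidal functor. Associativity of the products follows by comparing the two graph morphisms $T_1\sqcup T_1\sqcup T_1\to T_1$ assembled from copies of $m$ — this comparison is exactly the associativity of $\star$ in $\mathfrak{A}_\cat{O}$ — and then transporting through the functoriality and monoidality of $\vb_F^\cat{B}$; compatibility with the symmetry follows from $\vb_F^\cat{B}$ being \emph{symmetric} monoidal together with the $\mathbb{Z}_2$-fixed-point structures carried by $\beta$ and by $\kappa$, which match the braiding of $\mathfrak{A}_\cat{O}$ coming from the transposition of the two legs of $T_1$. Statement~\ref{thmringRii} is then formal: for any lax monoidal functor the image of the monoidal unit is an algebra object and each value is a module over it, and naturality of the structure maps forces the $\Aut(o)$-actions to be by module maps.

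I expect the unit coherence to be the main obstacle. One must show that the composite $R_o\cong \id_I\otimes R_o\xrightarrow{\delta\otimes R_o}R_{1_\cat{O}}\otimes R_o\to R_{1_\cat{O}\star o}\xrightarrow{\cong}R_o$ equals $\id_{R_o}$, and likewise on the other side. Concretely, gluing the operadic identity $1_\cat{O}$ onto a leg of $o$ corresponds, after transport through $\vb_F^\cat{B}$, to inserting the coevaluation $\Delta$ into one slot via $\beta$ while the other slot is paired against $\delta$; that this composes to the identity is precisely the content of the zigzag identities in Definition~\ref{defnondeg2pairing}, and making this identification precise requires the calculus construction of \cite[Section~6]{cyclic} used in the proof of Proposition~\ref{propinsertionbundle}. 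A secondary, essentially bookkeeping point is to pin down the symmetric monoidal structure on $\mathfrak{A}_\cat{O}$ precisely enough — its braiding being induced by a graph automorphism exchanging the two external legs of $T_1$ — that the symmetry axiom can be verified; once that is in place the axiom is immediate from the symmetry of $\vb_F^\cat{B}$ and of the pairing and copairing of $F$.
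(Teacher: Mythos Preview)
Your approach is correct and essentially identical to the paper's: both factor $R$ as $\vb_F^{\cat{B}}\circ D$ with $D(o)=(T_1,o)$ and extract the lax monoidal structure from the gluing morphism $T_1\sqcup T_1\to T_1$ in $\Graphs$. The paper's only shortcut is to package your construction as the single observation that $D$ itself is lax symmetric monoidal (with exactly the structure maps you write down), so that the coherences you propose to verify by hand come for free from composing a lax symmetric monoidal functor with the strong symmetric monoidal $\vb_F^{\cat{B}}$; your explicit treatment of the unit via the $2$-copairing $\delta$ and the zigzag identities is not spelled out in the paper.
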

	
	\begin{proof}
		The functor $J: \mathfrak{A}_o \to \int \cat{O}$ sending $o \in \mathfrak{A}_o$ to $(T_1 ,    o     )$ is lax symmetric monoidal because the gluing morphism $g:T_1 \sqcup T_1 \to T_1$ gives us the structure maps
		\begin{align}
			J(o) \sqcup J(o') =(T_1,o)\sqcup (T_1,o') \ra{(    g,\id   )}      (T_1,o\star o')=J(o\star o') \ . 
			\end{align}
		The functor $R$ is just the composition
		\begin{align}
			\mathfrak{A}_\cat{O} \ra{J} \int \cat{O} \ra{\vb_F^\cat{B}}\cat{S}(I,I)   \label{eqndefR}
			\end{align} of the lax symmetric monoidal functor $J$ with the symmetric monoidal functor $\vb_F^\cat{B}$ from Proposition~\ref{propinsertionbundle}. The statements~\ref{thmringRi}-\ref{thmringRiii} just partially spell out what the lax symmetric monoidality of $R$ entails. 
		\end{proof}
		
		\begin{corollary}\label{corthmstar}
			Let in Theorem~\ref{thmringR} additionally a modular 
			$\cat{O}$-algebra structure $\xi: \star \to \vb_F^\cat{B}$
			on $F$ with coefficients in $\cat{B}$ be given.
			Then for $o,o' \in \mathfrak{A}_\cat{O}$,
			\begin{align}\xi_o \star \xi_{o'} = \xi_{o\star o'} \ . \label{eqnxistar}
				\end{align}
			\end{corollary}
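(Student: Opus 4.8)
The plan is to recognize the identity \eqref{eqnxistar} as nothing but the monoidality axiom for $\xi$, transported along the functor $J$ from the proof of Theorem~\ref{thmringR}.

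First I would recall the precise shape of the product $\star$ on $R$. By the proof of Theorem~\ref{thmringR} we have $R=\vb_F^\cat{B}\circ J$, where $J:\mathfrak{A}_\cat{O}\to\int\cat{O}$, $o\mapsto (T_1,o)$, is lax symmetric monoidal with structure maps $(T_1,o)\sqcup(T_1,o')\xrightarrow{(g,\id)}(T_1,o\star o')$ coming from the gluing morphism $g:T_1\sqcup T_1\to T_1$. Hence the product $\star:R_o\otimes R_{o'}\to R_{o\star o'}$ of Theorem~\ref{thmringR}~\ref{thmringRi} is precisely the lax monoidal structure map of $R$, namely the composite of the (strong) monoidal structure isomorphism $\vb_F^\cat{B}(T_1,o)\otimes\vb_F^\cat{B}(T_1,o')\cong\vb_F^\cat{B}\big((T_1,o)\sqcup(T_1,o')\big)$ of $\vb_F^\cat{B}$ from Proposition~\ref{propinsertionbundle} with $\vb_F^\cat{B}(g,\id)$.

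Next I would use that $\xi:\star\to\vb_F^\cat{B}$ is a \emph{monoidal} natural transformation of symmetric monoidal functors $\int\cat{O}\to\cat{S}(I,I)$; whiskering it with the lax monoidal functor $J$ produces a monoidal natural transformation $\xi\ast J:\star\circ J\to R$. Evaluating its monoidality constraint at a pair $(o,o')$ says that the square
\begin{equation}
	\begin{tikzcd}[column sep=large]
		(\star\circ J)(o)\otimes(\star\circ J)(o') \ar[r] \ar[d,"\xi_o\otimes\xi_{o'}"'] & (\star\circ J)(o\star o') \ar[d,"\xi_{o\star o'}"] \\
		R_o\otimes R_{o'} \ar[r,"\star"] & R_{o\star o'}
	\end{tikzcd}
\end{equation}
commutes, the top arrow being the lax structure map of $\star\circ J$. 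But $\star$ is the trivial functor, so $\star\circ J$ is trivial as well: it takes every $o$ to $I$, and its lax structure map at $(o,o')$ is the canonical coherence isomorphism $I\otimes I\cong I$ (with $\star(g,\id)=\id_I$). Under this identification the commuting square reads exactly $\xi_{o\star o'}=\xi_o\star\xi_{o'}$, which is \eqref{eqnxistar}. Equivalently, avoiding whiskering, one may combine the naturality of $\xi$ on the morphism $(g,\id)$ of $\int\cat{O}$ (the triangle of Remark~\ref{remgenopcomp}) with the monoidality of $\xi$ on the object $(T_1,o)\sqcup(T_1,o')$.

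The only step requiring care is the usual coherence bookkeeping in the bicategorical setting: one must confirm that whiskering the monoidal transformation $\xi$ by the lax monoidal functor $J$ does yield a monoidal transformation, and that its structure data match the ones defining $\star$ in Theorem~\ref{thmringR}. Since everything is formulated up to coherent homotopy, this is routine and poses no genuine obstacle — the content of the corollary is simply that a monoidal transformation out of the trivial functor is automatically multiplicative.
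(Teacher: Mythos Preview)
Your proof is correct and follows essentially the same approach as the paper: both argue that whiskering the monoidal transformation $\xi$ with the lax symmetric monoidal functor $J$ yields a monoidal transformation $\star\circ J\to R$, whose monoidality constraint is precisely~\eqref{eqnxistar}. You simply unpack the argument in more detail than the paper's two-sentence proof.
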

	
	\begin{proof}
		The transformation $\xi \circ J$ from the lax monoidal functor $\star : \mathfrak{A}_\cat{O}\to \cat{S}(I,I)$ to $\vb_F^\cat{B} \circ J =R$ (this is how $R$ was defined, see~\eqref{eqndefR}) is monoidal because $J$ is lax symmetric monoidal and $\xi$ is by assumption monoidal. This implies~\eqref{eqnxistar}.
		\end{proof}
	
	The following gives us a far-reaching generalization of the multiplicative structures built in
	\cite[Section~5.2]{juhasz}
	on the state spaces
	of a three-dimensional topological field theory:
	
	\begin{corollary}\label{corproducts}
		Let $\cat{B}$ be a $\Lexf$-valued
		ansular functor such that the ribbon Grothendieck-Verdier category obtained by evaluation on the circle comes with a self-dual unit. 
		Then there are natural unital and associative product maps
		\begin{align}
			\star : \cat{B}(H) \otimes \cat{B}(H') \to \cat{B}(H \# H') \label{eqnHHstar}
			\end{align} for handlebodies without embedded disks, where $\#$ is the connected sum. 
		\end{corollary}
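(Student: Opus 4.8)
The plan is to obtain the products directly from Theorem~\ref{thmringR} applied to the self-dual object $F=I$, once handlebodies without embedded disks are encoded through the algebra of unary operations $\mathfrak{A}_{\Hbdy}$.

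First I would note that the hypothesis on $\cat B$ is exactly what makes Theorem~\ref{thmringR} applicable with $F=I$. Let $\cat A$ be the ribbon Grothendieck-Verdier category underlying $\cat B$. By Lemma~\ref{lemmaselfdualinlex} a self-dual structure on $I\in\cat A$ is the same as an isomorphism $\psi:I\xrightarrow{\ \cong\ }DI=K$ satisfying the symmetry condition, i.e.\ precisely a ``self-dual unit'' in the sense of the statement (automatic in the rigid case, cf.\ Corollary~\ref{corpointing}). Fixing such a $\psi$, Theorem~\ref{thmringR} yields a lax symmetric monoidal functor $R:\mathfrak{A}_{\Hbdy}\to\vect$, $o\mapsto R_o=\cat B_o(I,I)$; in particular we have associative and unital product maps $\star:R_o\otimes R_{o'}\to R_{o\star o'}$ which are natural, and even $\Map(o)\times\Map(o')$-equivariant, in $o,o'\in\mathfrak{A}_{\Hbdy}$.

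Next I would build a dictionary between $\cat B(H)$ for $H$ without embedded disks and the unary operations. Fix once and for all, in a neighbourhood of a basepoint of $\partial H$, a standard configuration of two embedded disks; this upgrades $H$ to an object $\bar H\in\Hbdy(T_1)=\mathfrak{A}_{\Hbdy}$, well defined up to the handlebody group of $H$. I claim there is a natural isomorphism $\cat B(H)\cong R_{\bar H}=\cat B_{\bar H}(I,I)$. This is the usual principle that capping an embedded disk coincides with labelling it by the monoidal unit: gluing the genus zero cap $c\in\Hbdy(T_0)$ (the three-ball $\mathbb{D}^3$ with one embedded disk, the corolla having one leg) onto a leg of $\bar H$ fills that disk in, and using the modular-algebra coherence data of $\cat B$ together with the genus zero formula $\cat B_c(Y)\cong\cat A(K,Y)$ for ansular functors (cf.\ \cite[Example~7.3]{cyclic}) one reduces, by a coend computation using $D^2\cong\id$, $K=DI$ and co-Yoneda, first to $\cat B(\hat H;I)$ with $\hat H\in\Hbdy(T_0)$ the handlebody $H$ with a single embedded disk, and then to $\cat B_{\bar H}(I,I)$. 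In the same model one checks that connected sum is realised operadically: gluing $\bar H$ and $\bar H'$ along one disk of each leaves precisely the two standard disks of $\ol{H\# H'}$, so $\ol{H\#H'}=\bar H\star\bar H'$ in $\mathfrak{A}_{\Hbdy}$, and $\ol{\mathbb{D}^3}=1\in\mathfrak{A}_{\Hbdy}$ is the operadic identity.

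With the dictionary in place, the required product is the composite
\[
\cat B(H)\otimes\cat B(H')\;\cong\;R_{\bar H}\otimes R_{\bar H'}\;\xrightarrow{\ \star\ }\;R_{\bar H\star\bar H'}\;=\;R_{\ol{H\# H'}}\;\cong\;\cat B(H\# H')\,,
\]
and unitality (with unit the image of $1\in k$ under $k\to R_1\cong\cat B(\mathbb{D}^3)$, which one can further identify with the distinguished vector $\xi_{\mathbb{D}^3}$ of Corollary~\ref{corpointing}), associativity and naturality all follow from the lax symmetric monoidality and equivariance of $R$ in Theorem~\ref{thmringR}, together with $(H_1\# H_2)\# H_3=H_1\#(H_2\# H_3)$ and $\mathbb{D}^3\# H=H$; multiplicativity $\xi_H\star\xi_{H'}=\xi_{H\# H'}$ of the distinguished vectors is the corresponding special case of Corollary~\ref{corthmstar}. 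Equivalently, after the identifications $\cat B(H)\cong\cat B(\hat H;I)$ and $\cat B(H\# H')\cong(\cat A_{\hat H}\boxtimes\cat A_{\hat H'})(\Delta)$, the product is $(\cat A_{\hat H}\boxtimes\cat A_{\hat H'})(\beta)$ with $\beta:I\boxtimes I\to\Delta$ the symmetric $2$-pairing of the self-dual unit.

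The conceptual content is therefore only that handlebodies without disks embed into $\mathfrak{A}_{\Hbdy}$ via two auxiliary disks and that connected sum becomes the lax monoidal multiplication of the functor $R$ of Theorem~\ref{thmringR} for $F=I$. The work, and the main obstacle, lies in the bookkeeping of the third paragraph: \textbf{(a)} extracting the natural isomorphism $\cat B(H)\cong\cat B_{\bar H}(I,I)$ from the gluing coherence of the modular $\Hbdy$-algebra $\cat B$ and the genus zero disk formula, which is the only step requiring a genuine coend manipulation, and \textbf{(b)} matching the topological connected sum with the $\star$-product of $\mathfrak{A}_{\Hbdy}$ inside a fixed model of the handlebody operad (e.g.\ Giansiracusa's), including the compatibility of the chosen standard disks and of the induced homomorphisms $\Map(H)\times\Map(H')\to\Map(H\# H')$ needed for naturality.
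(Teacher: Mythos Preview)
Your proposal is correct and follows essentially the same route as the paper: apply Theorem~\ref{thmringR} to the self-dual object $F=I$ and use the identification $\cat{B}(H)\cong\cat{B}_{\bar H}(I,I)$ for $H$ without embedded disks. The paper's proof simply asserts this identification (writing it as $\cat{B}(H)\cong\cat{B}(H_2;I,I)$ with $H_2$ the handlebody $H$ with two embedded disks) without spelling out the capping argument, and leaves the match between connected sum and the $\star$-product of $\mathfrak{A}_{\Hbdy}$ implicit; your bookkeeping in points~(a) and~(b) makes explicit precisely what the paper suppresses, but there is no genuine difference in strategy.
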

	The spaces of conformal blocks appearing here are for handlebodies without embedded disks in their boundary, but in order to have the operation $\star$, one still needs to fix disks at which the connected sum is taken.
	The equivariance of the $\star$-operation 
	in the sense of Theorem~\ref{thmringR}~\ref{thmringRiii}
	 is  for the handlebodies groups of the handlebodies with these disks.

	\begin{proof}[\slshape Proof of Corollary~\ref{corproducts}]
		This follows if we apply Theorem~\ref{thmringR} to the self-dual
		object $I$ 
		and by noting that  $\cat{B}(H)\cong \cat{B}(H_2;I,I)$ for every handlebody $H$ without embedded disks, where $H_2$ is the same handlebody with two disks embedded in its boundary. 
		 	\end{proof}
	
		For the direct sum $\bigoplus_{g\ge 0} \widehat{\cat{A}}(H_g)$, this implies the following consequence:
		
	\begin{corollary}\label{corproduct}
		For any ribbon Grothendieck-Verdier category $\cat{A}$ in $\Lexf$ with self-dual monoidal unit,
		the direct sum of the spaces of conformal blocks at all genera (without boundary components) is naturally an algebra graded over the genus.
		By equipping the spaces of conformal blocks for all handlebodies $H$ with this multiplicative structure, we obtain a strictly finer invariant of the ansular functor than just the representations on all handlebodies $H$ without embedded disks.
	\end{corollary}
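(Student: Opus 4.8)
The first claim is essentially immediate from the material already developed. Since $\cat{A}$ is a ribbon Grothendieck-Verdier category in $\Lexf$ with self-dual monoidal unit, its ansular functor $\widehat{\cat{A}}$ --- the modular extension from Corollary~\ref{corext} --- satisfies the hypotheses of Corollary~\ref{corproducts}, so we obtain product maps $\star : \widehat{\cat{A}}(H)\otimes\widehat{\cat{A}}(H')\to\widehat{\cat{A}}(H\# H')$ for handlebodies without embedded disks. The connected sum of two such handlebodies is again of this type, with genus the sum of the two genera; hence, specializing to $H=H_g$, $H'=H_{g'}$ and using $H_g\# H_{g'}\cong H_{g+g'}$, these maps assemble into a multiplication on $\bigoplus_{g\ge 0}\widehat{\cat{A}}(H_g)$ that is homogeneous for the genus grading. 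Associativity, unitality (with unit the distinguished vector in the degree-zero piece $\widehat{\cat{A}}(H_0)$, i.e.\ the image of the operadic identity) and naturality are inherited from the lax symmetric monoidality of the functor $R$ of Theorem~\ref{thmringR}, concretely from parts~\ref{thmringRi} and~\ref{thmringRii}. It is worth recording along the way that, by Corollary~\ref{corthmstar} applied to the ansular correlator attached to $I\in\cat{A}$ in Corollary~\ref{corpointing}, the distinguished vectors satisfy $\xi_{H_g}\star\xi_{H_{g'}}=\xi_{H_{g+g'}}$, so that $\xi_{H_1}$ generates a graded subalgebra which, by the non-vanishing of $\xi_{H_g}=\xi_{H_1}^{\star g}$ from Proposition~\ref{propnonzero}, is a polynomial algebra $k[\xi_{H_1}]$.

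For the second claim I would first note that the underlying graded vector space $\bigoplus_{g}\widehat{\cat{A}}(H_g)$ together with the handlebody group actions is recovered by merely forgetting the $\star$-product: the homogeneous pieces are the values $\widehat{\cat{A}}(H_g)$ of the ansular functor on the genus-$g$ objects and the $\Map(H_g)$-actions are the values of $R$ on automorphisms, see Theorem~\ref{thmringR}~\ref{thmringRii}--\ref{thmringRiii} (the choice of disks needed to define $\star$ is irrelevant here). Hence the graded-algebra assignment refines the collection of handlebody representations, and the only thing left to prove is that this refinement is proper.

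To establish strictness I would produce two ansular functors --- equivalently, two ribbon Grothendieck-Verdier categories in $\Lexf$ with self-dual unit --- whose families of handlebody group representations on handlebodies without embedded disks are isomorphic but whose graded algebras are not. The natural place to look is the class of module categories over finite-dimensional ribbon Hopf algebras, where both sides are computable: the spaces of conformal blocks are $\widehat{\cat{A}}(H_g)\cong\cat{A}(K,\mathbb{F}^{\otimes g})$ with $\mathbb{F}\cong H^*_{\text{coadj}}$, and the $\star$-product is assembled from the structure maps of the coend $\mathbb{F}$ exactly as in the Hopf-algebraic examples above. The graded dimensions and the mapping class group representations obtained in this way depend only on comparatively coarse data (essentially the coadjoint action together with the duality), whereas the isomorphism type of the graded algebra additionally encodes the multiplication coming from the monoidal product and the braiding --- in particular it tells apart a polynomial-type algebra from one carrying nilpotents forced by non-semisimplicity. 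Carrying out such a comparison in a minimal example (for instance a suitably chosen small quantum group against a category engineered to have matching conformal-block representations) is the only non-formal ingredient, and it is also where the genuine difficulty lies, since one has to match the representations genus by genus compatibly with all handlebody group actions while keeping the two $\star$-products inequivalent.
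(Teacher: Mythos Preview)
Your treatment of the first claim is correct and matches the paper: the graded algebra structure is an immediate consequence of Corollary~\ref{corproducts} (ultimately of Theorem~\ref{thmringR}). One caveat: your aside about the polynomial subalgebra $k[\xi_{H_1}]$ invokes Corollary~\ref{corpointing} and Proposition~\ref{propnonzero}, both of which need $\cat{A}$ to be a finite \emph{ribbon} category, i.e.\ rigid with simple unit. The present corollary assumes only ribbon Grothendieck--Verdier with self-dual unit, so that paragraph does not belong here.

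For the second claim there is a genuine gap. You correctly identify that strictness requires exhibiting two ansular functors with isomorphic handlebody group representations at every genus but non-isomorphic graded algebras, and you then gesture towards ribbon Hopf algebras and small quantum groups while conceding that ``the genuine difficulty lies'' in actually producing such a pair. That is not a proof; it is a program, and one that is harder than necessary. In the rigid setting you propose, Proposition~\ref{propnonzero0} forces the $\star$-products to be injective, which rather constrains the possible algebra structures and makes distinguishing examples more delicate.

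The paper instead exploits the \emph{non-rigid} Grothendieck--Verdier setting, where this injectivity fails, to give a very simple and explicit pair (Example~\ref{exmult}): take any non-semisimple finite-dimensional commutative Frobenius algebra $F$, form the symmetric ribbon Grothendieck--Verdier category $\cat{A}$ of $F$-modules with the product $X\odot Y=(Y^*\otimes_F X^*)^*$, trivial braiding and trivial balancing. One computes $\mathbb{F}\cong F$, hence $\widehat{\cat{A}}(H_g)\cong F$ with trivial $\Map(H_g)$-action for all $g$, and the $\star$-product is the multiplication of $F$. Comparing with $\vect^{\oplus\dim F}$, one obtains identical handlebody group representations at every genus, but the two graded algebras differ exactly when $F$ is not semisimple. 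This settles strictness in one line once the example is in hand; the key idea you are missing is to leave the rigid world.
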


\begin{proof} The first part follows directly from Corollary~\ref{corproducts}. The second part will be a consequence of Example~\ref{exmult}.\end{proof}

\begin{example}\label{exmult}
	We thank Ingo Runkel and Christoph Schweigert for bringing this example to our attention. 
	Let $F$ be a finite-dimensional commutative algebra and $\cat{A}$ the category of finite-dimen\-sional $F$-modules equipped with the symmetric monoidal product $\odot$
	defined by $X \odot Y := (Y^* \otimes_F X^*) ^*$ for $F$-modules $X$ and $Y$, where $-^* : \cat{A}\to\cat{A}^\op$ takes the contragredient $F$-module, namely the linear dual, seen as right $F$-module and then also left $F$-module by commutativity.
	Now $(\cat{A},\odot,*)$ is a Grothendieck-Verdier category in $\Lexf$
	(with $\otimes_F$, $\cat{A}$ would be a Grothendieck-Verdier category in $\Rexf$; the 
	second monoidal product $\odot$
	obtained by conjugation with the duality as in~\cite{bd}, see also~\eqref{eqn2ndmonoidalproduct} above, is then left exact).
	The product $\odot$ is symmetric. With the trivial braiding and the trivial balancing, $\cat{A}$ becomes a ribbon Grothendieck-Verdier category in $\Lexf$. 
	By seeing $F$ as a vertex operator algebra, this can be viewed as a special case of the construction of the ribbon Grothendieck-Verdier structure on categories of modules of vertex operator algebras~\cite[Theorem~2.12]{alsw}. With the Peter-Weyl Theorem in the form of~\cite[Corollary~2.9]{fss}, we find $\int^{X \in \cat{A}} X^* \boxtimes X \cong F^*$ as objects in $\cat{A}\boxtimes \cat{A}$, i.e.\ as $F$-bimodules. 
	Let us now assume that $F$ is even a Frobenius algebra, so that $F^* \cong F$ as $F$-bimodules. 
	Then we find $\mathbb{F} = \odot \left(  \int^{X \in \cat{A}} X^* \boxtimes X \right)\cong F$. In other words, the canonical coend is the monoidal unit $F \in \cat{A}$. This entails that the space of conformal blocks for the handlebody $H_g$ of genus $g$ without embedded disks is given by $\widehat{\cat{A}}(H_g)\cong \cat{A}(I,\mathbb{F}^{\otimes g})=\Hom_F(F,F)\cong F$ thanks to \cite[Theorem~7.8]{cyclic}; the handlebody group representation is trivial.
	In fact, $\cat{A}$ produces not only an ansular, but a modular functor, even though $\cat{A}$ is generally not rigid.
	The spaces of conformal blocks $\widehat{\cat{A}}(H_g)$ are the same as for the ribbon Grothendieck-Verdier category $\vect^{\oplus n}$ with $n:=\dim \, F$, but the multiplicative structure on $\bigoplus_{g\ge 0} \widehat{\cat{A}}(H_g)$, which is induced by the multiplication of $F$, is different from the corresponding one for $\vect^{\oplus n}$ if $F$ is not semisimple. 
	Note also that if $F$ is not semisimple, $\cat{A}$ and $\vect^{\oplus n}$ give us an easy example of two ribbon Grothendieck-Verdier categories that
	produce non-equivalent modular functors whose mapping class group representations of closed surfaces, however, agree at all genera. This is, in a rather generalized sense, another instance of the fact that `modular data' is generally insufficient for the description of modular functors~\cite{mignardschauenburg}.
	\end{example}

	\begin{proposition}\label{propnonzero0}
		Let $\cat{A}$ be a finite ribbon category.
		Then the products
		\begin{align}
		\star : \widehat{\cat{A}}(H) \otimes \widehat{\cat{A}}(H') \to \widehat{\cat{A}}(H\# H')    \label{eqntheproducts}
		\end{align} are injective.
		\end{proposition}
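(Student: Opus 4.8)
The plan is to reduce the statement to an elementary injectivity claim inside $\cat{A}$. First I would make the source and target of $\star$ explicit: by \cite[Theorem~7.8]{cyclic} (the formula already used in Example~\ref{exmult}), the ansular functor $\widehat{\cat{A}}$ of a finite ribbon category satisfies $\widehat{\cat{A}}(H;X_1,\dots,X_m)\cong\cat{A}(I,X_1\otimes\dots\otimes X_m\otimes\mathbb{F}^{\otimes g})$ for a genus $g$ handlebody $H$ with $m$ embedded disks labeled $X_1,\dots,X_m$, where $\mathbb{F}=\int^{X\in\cat{A}}X^\vee\otimes X$ (here $D=(-)^\vee$ and $\odot=\otimes$ since $\cat{A}$ is rigid). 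In particular $\widehat{\cat{A}}(H_g)\cong\cat{A}(I,\mathbb{F}^{\otimes g})$ and, as $H_g\#H_{g'}=H_{g+g'}$, the map $\star$ of \eqref{eqntheproducts} becomes a linear map $\cat{A}(I,\mathbb{F}^{\otimes g})\otimes_k\cat{A}(I,\mathbb{F}^{\otimes g'})\to\cat{A}(I,\mathbb{F}^{\otimes(g+g')})$.

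Next I would identify this map. Unwinding Corollary~\ref{corproducts} and Theorem~\ref{thmringR} for the self-dual unit $I$, the product is obtained by applying $\vb_I^{\widehat{\cat{A}}}$ to the gluing morphism $T_1\sqcup T_1\to T_1$; at the level of $\End_\kappa^\cat{A}$ this morphism has a single internal edge, so its effect is to insert a single copy of the coevaluation object $\Delta=\int^{Z\in\cat{A}}Z^\vee\boxtimes Z$. Concretely $\star$ becomes the $Z=I$ structure map of the coend $\int^{Z\in\cat{A}}\cat{A}(I,Z^\vee\otimes\mathbb{F}^{\otimes g})\otimes_k\cat{A}(I,Z\otimes\mathbb{F}^{\otimes g'})$, composed with the canonical isomorphism $\int^{Z}\cat{A}(I,Z^\vee\otimes A)\otimes_k\cat{A}(I,Z\otimes B)\cong\cat{A}(I,A\otimes B)$ (an instance of the co-Yoneda lemma after using rigidity). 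Tracing the identifications — all of them isomorphisms, and hence irrelevant for injectivity — one finds that $\star$ is, up to isomorphism, the canonical lax-monoidality morphism
\[
 m_{M,N}\colon\ \cat{A}(I,M)\otimes_k\cat{A}(I,N)\ \to\ \cat{A}(I,M\otimes N),\qquad f\otimes g\ \mapsto\ (f\otimes g)\circ l_I^{-1},
\]
with $M=\mathbb{F}^{\otimes g}$ and $N=\mathbb{F}^{\otimes g'}$, where $l_I\colon I\otimes I\to I$ is the unitor.

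It then remains to show that $m_{M,N}$ is injective for arbitrary $M,N$ in a finite tensor category. Using rigidity one has natural isomorphisms $\cat{A}(I,N)\cong\cat{A}(N^\vee,I)$ and $\cat{A}(I,M\otimes N)\cong\cat{A}(N^\vee,M)$ under which $m_{M,N}$ turns into the composition map $\cat{A}(I,M)\otimes_k\cat{A}(N^\vee,I)\to\cat{A}(N^\vee,M)$, $f\otimes h\mapsto f\circ h$, so it suffices to prove that for all objects $Y$ the composition $\cat{A}(I,M)\otimes_k\cat{A}(Y,I)\to\cat{A}(Y,M)$ is injective. Suppose $\sum_{j}\phi_j\circ h_j=0$ with $h_j\in\cat{A}(Y,I)$ linearly independent and $\phi_j\in\cat{A}(I,M)$. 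Since $\cat{A}$ is a finite tensor category over the algebraically closed field $k$, the unit is simple with $\cat{A}(I,I)=k$, so all composition factors of $I^{\oplus r}$ equal $I$, and a morphism $Y\to I^{\oplus r}$ is an epimorphism if and only if it is annihilated by no nonzero morphism $I^{\oplus r}\to I$; for the morphism $(h_j)_j$ the latter condition is exactly the linear independence of the $h_j$. Hence $(h_j)_j\colon Y\to I^{\oplus r}$ is epi, the equation reads $\psi\circ(h_j)_j=0$ for the morphism $\psi\colon I^{\oplus r}\to M$ with components $\phi_j$, and right-cancelling the epimorphism yields $\psi=0$, i.e.\ all $\phi_j=0$. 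This proves injectivity of $m_{M,N}$, and therefore of $\star$.

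The main obstacle I expect is the middle step: establishing that $\star$ is genuinely the canonical map $m_{M,N}$, and in particular that gluing the two handlebodies along boundary disks (a boundary connected sum $H_g\natural H_{g'}=H_{g+g'}$) contributes no extra factor of $\mathbb{F}$. This is a bookkeeping exercise with the calculus functor of \cite[Section~6]{cyclic} and the coend gluing formulas of \cite[Section~7]{cyclic} rather than a conceptually hard point; once it is settled, the remaining injectivity argument is elementary.
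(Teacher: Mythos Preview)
Your proof is correct and follows essentially the same route as the paper: both identify the spaces via \cite[Theorem~7.8]{cyclic}, argue that $\star$ is the map $\cat{A}(I,\mathbb{F}^{\otimes g})\otimes\cat{A}(I,\mathbb{F}^{\otimes g'})\to\cat{A}(I,\mathbb{F}^{\otimes(g+g')})$ induced by $\otimes$, and then use simplicity of $I$ for injectivity. The only difference is that the paper offloads the last step to \cite[Lemma~4.6]{mwdehn}, whereas you supply the elementary argument directly.
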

	
Without the hypotheses that $\cat{A}$ is finite ribbon, this is generally wrong:	In the Example~\ref{exmult}, the products~\eqref{eqntheproducts} are given by
 the multiplication operation $F \otimes F \to F$ of the commutative Frobenius algebra $F$. Clearly, this map is  not injective if $\dim\, F \ge 2$.

	\begin{proof}[\slshape Proof of Proposition~\ref{propnonzero0}]We use \cite[Theorem~7.8]{cyclic} to see that,
		after choosing a cut system for $H$ with genus $g$ and $H'$ with genus $g'$, the map $\star$ is given by the map
		\begin{equation}
			\begin{tikzcd}
				\cat{A}(I,\mathbb{F}^{\otimes g}) \otimes \cat{A}(I,\mathbb{F}^{\otimes g'})   \ar[]{rrr}{\cong} \ar[swap]{dd}{\star } & &&         \cat{A}(I,\mathbb{F}^{\otimes g}\otimes I) \otimes \cat{A}(I,I\otimes \mathbb{F}^{\otimes g'})
				\ar{dd}{  I\boxtimes I \to \Delta=\Delta'\boxtimes \Delta''=\int^{X \in \cat{A}}X^\vee \boxtimes X  }    \\
				\\ \cat{A}(I,\mathbb{F}^{\otimes (g+g')}) 
				& &  & \ar{dd}{\text{$\cong$, see \cite[Proposition~2.28]{cyclic}}}
		  	\cat{A}(I,\mathbb{F}^{\otimes g}\otimes \Delta') \otimes \cat{A}(I,\Delta''\otimes \mathbb{F}^{\otimes g'}) \\ \\ &&& \ar{uulll}{\text{composition over $X$}}	\oint^{X \in \cat{A}}\cat{A}(X,\mathbb{F}^{\otimes g} ) \otimes \cat{A}(I,X \otimes \mathbb{F}^{\otimes g'})  \ . 
			\end{tikzcd} 
		\end{equation}
	Here $\oint$ is the left exact coend~\cite{lyulex}, see~\cite[Theorem~6.4]{cyclic} for the connection 
	to the gluing of modular algebras.
		From this, we deduce that the multiplication map $\star$
		\begin{align}
		\cat{A}(I,\mathbb{F}^{\otimes g}) \otimes \cat{A}(I,\mathbb{F}^{\otimes g'})\to \cat{A}(I,\mathbb{F}^{\otimes (g+g')}) \label{eqnmultiplicativestrcture}
		\end{align} is just the map induced by $\otimes$.
		Since $I$ is simple, this map is injective by \cite[Lemma~4.6]{mwdehn}.
		\end{proof}

Note that the multiplicative structure can, after choosing a cut system, 
be written down directly via~\eqref{eqnmultiplicativestrcture}. 
With this purely algebraic description, the independence of the cut system and the equivariance with respect to the mapping class group actions that is a part of Corollary~\ref{corproducts} seem hard to see.
	The question whether there are products on spaces of conformal blocks with respect to the connected sum ---
	a structure that is not part of the axioms of ansular or modular functors --- was posed to us by Simon Lentner and Christoph Schweigert. 
	The results of this section, in particular Corollary~\ref{corproducts}, provide a purely topological construction of these operations.

	\begin{proposition}\label{propnonzero}
		For a finite ribbon category $\cat{A}$, the distinguished $\Map(H)$-invariant vectors $\xi_H \in \widehat{\cat{A}}(H)$ are non-zero for all handlebodies and satisfy \begin{align} \xi_H \star \xi_{H'} = \xi_{H \# H'}\ . \label{eqnconnectedsumxiH}
			\end{align}
		\end{proposition}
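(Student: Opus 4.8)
The plan is to identify $\xi_H$ with the ansular correlator associated, via Theorem~\ref{thmansularcor}, to the monoidal unit $I\in\cat{A}$ regarded as a symmetric commutative Frobenius algebra, and then to read off both claimed properties from the multiplicative structure of Section~\ref{secmult}. First I would recall that, by the description of $\widehat{\cat{A}}$ on handlebodies in \cite[Theorem~7.8]{cyclic}, a cut system for $H_g$ identifies $\widehat{\cat{A}}(H_g)\cong \cat{A}(I,\mathbb{F}^{\otimes g})$, and under this identification the correlator vector $\xi_{H_g}$ obtained from the Frobenius algebra $I$ is the image of the unit structure maps of $I$ — concretely, since the multiplication and unit of $I$ are the unitors and $\id_I$, the vector $\xi_{H_g}$ corresponds to the morphism $I\to\mathbb{F}^{\otimes g}$ assembled from $g$ copies of the copairing $\delta:K\to I\otimes I$ (equivalently the coevaluation-type map $I\to\mathbb{F}$) composed using the $\odot$-unit structure. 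This is exactly the vector produced by the modular $\Hbdy$-algebra structure $\xi^I:\star\to\vb_I^{\widehat{\cat{A}}}$ from Corollary~\ref{corext} and Theorem~\ref{thmansularcor}.

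The multiplicativity \eqref{eqnconnectedsumxiH} is then immediate from Corollary~\ref{corthmstar}: applying that corollary to $\cat{B}=\widehat{\cat{A}}$, the self-dual object $F=I$, and the modular $\Hbdy$-algebra structure $\xi^I$, and taking $o,o'$ to be the unary operations that present $\widehat{\cat{A}}(H)\cong\widehat{\cat{A}}(H_2;I,I)$ and $\widehat{\cat{A}}(H')\cong\widehat{\cat{A}}(H_2';I,I)$ whose $\star$-product is the unary operation presenting $\widehat{\cat{A}}(H\#H')$ (this is the connected-sum identification used in the proof of Corollary~\ref{corproducts}), equation~\eqref{eqnxistar} gives $\xi_H\star\xi_{H'}=\xi_{H\#H'}$ directly. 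So no further work is needed for the second assertion beyond matching up the operadic bookkeeping of which unary operations realize $H$, $H'$ and $H\#H'$.

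The non-vanishing is the substantive point. The strategy is to reduce to genus zero and then propagate upward using injectivity of the products. Concretely, I would argue by induction on the genus $g$ of $H$. For $g=0$ the handlebody is a ball and $\widehat{\cat{A}}(H_0)\cong\cat{A}(I,I)=k$, with $\xi_{H_0}$ the identity morphism $\id_I$, hence non-zero (here one uses that $I\neq 0$, which holds since $\cat{A}$ is finite and in particular non-trivial as an abelian category). For the inductive step, write $H_g\cong H_{g-1}\# H_1$ (connected sum with a solid torus), so that by \eqref{eqnconnectedsumxiH} one has $\xi_{H_g}=\xi_{H_{g-1}}\star\xi_{H_1}$. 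By Proposition~\ref{propnonzero0} the product $\star:\widehat{\cat{A}}(H_{g-1})\otimes\widehat{\cat{A}}(H_1)\to\widehat{\cat{A}}(H_g)$ is injective, so it suffices that $\xi_{H_{g-1}}\neq 0$ (true by induction) and $\xi_{H_1}\neq 0$. For the base genus-one case I would compute $\xi_{H_1}$ explicitly under $\widehat{\cat{A}}(H_1)\cong\cat{A}(I,\mathbb{F})$: it is precisely the canonical morphism $I\to\mathbb{F}=\int^{X}X^\vee\otimes X$ that is the image of $\id_I: I\to I^\vee\otimes I$ (using rigidity, $I^\vee\cong I$) under the coend structure map, i.e.\ the component of the universal dinatural transformation at $X=I$; this is non-zero because the coend structure maps at simple objects are non-zero when $\cat{A}$ is finite — which one can see via the Peter–Weyl/Hom-description of $\mathbb{F}$ as in \cite[Corollary~2.9]{fss}, or directly from the computation in \cite[Lemma~4.6]{mwdehn}.

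\textbf{Main obstacle.} The routine part is the operadic matching for \eqref{eqnconnectedsumxiH}; the genuinely delicate point is the base case $\xi_{H_1}\neq 0$, i.e.\ that the distinguished morphism $I\to\mathbb{F}$ coming from the self-duality of the unit is non-zero. Everything downstream (all higher genera) is then forced by the injectivity statement Proposition~\ref{propnonzero0}, so the proof hinges on this single non-degeneracy input about the canonical coend $\mathbb{F}$ and the self-dual structure on $I$ in a finite ribbon category.
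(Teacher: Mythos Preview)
Your proposal is correct and follows essentially the same route as the paper's proof: multiplicativity from Corollary~\ref{corthmstar}, the genus-zero vector equal to $\id_I\neq 0$, the higher-genus reduction via $\xi_{H_g}=\xi_{H_1}^{\star g}$ (your inductive phrasing is equivalent), and non-vanishing propagated by the injectivity of $\star$ from Proposition~\ref{propnonzero0}. The only minor difference is your justification for $\xi_{H_1}\neq 0$: the paper observes that the map $I\cong I^\vee\otimes I\to\mathbb{F}$ is precisely the \emph{unit} of Lyubashenko's algebra structure on $\mathbb{F}$~\cite[Section~2]{lyu}, hence non-zero because $\mathbb{F}\neq 0$ --- this is a bit more self-contained than arguing via coend structure maps at simples, and in particular does not need the references you invoke (\cite[Lemma~4.6]{mwdehn} is about injectivity of $\otimes$ on hom-spaces, not directly about non-vanishing of coend structure maps).
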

		
	One might first think that $\cat{A}=0$ has to be excluded, but the trivial linear monoidal category is not a finite ribbon category because $\End_\cat{A}(I)\neq k$ in that case.
	
	\begin{proof}
		First we observe that Corollary~\ref{corthmstar} gives us~\eqref{eqnconnectedsumxiH}. 
		
		Let us now prove $\xi_H \neq 0$:
		By construction $\xi_{\mathbb{B}^3} \in \cat{A}(I,I)$ is $\id_I \neq 0$ and $\xi_{\mathbb{D}^2 \times \mathbb{S}^1} \in \cat{A}(I,\mathbb{F})$ is the map $I\cong I^\vee \otimes I \to \int^{X\in \cat{A}}X^\vee \otimes X$ which is unit of Lyubashenko's algebra structure on $\mathbb{F}$~\cite[Section~2]{lyu} and hence non-zero.
		If $H$ has genus $g$, then $\xi_H=\xi_{\mathbb{D}^2 \times \mathbb{S}^1}^{\star g}$ by~\eqref{eqnconnectedsumxiH}. This is non-zero by Proposition~\ref{propnonzero0} since we have just seen $\xi_{\mathbb{D}^2 \times \mathbb{S}^1}\neq 0$.
		\end{proof}

		\begin{lemma}\label{lemmadim}
			Let $\cat{A}$ be a unimodular finite ribbon category. If for any handlebody $H$ with genus $g$
			\begin{align}
				\dim\, \widehat{\cat{A}}(H)< 2^g 
				\end{align}
				for the space of conformal blocks,
				then $\cat{A}\simeq \vect$.
			\end{lemma}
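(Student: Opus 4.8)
The plan is to relate the dimensions $\dim\widehat{\cat{A}}(H_g)$ across genera using the multiplicative structure of Section~\ref{secmult} and then to analyze the genus‑one case by Hopf‑algebraic means. By \cite[Theorem~7.8]{cyclic} we have $\widehat{\cat{A}}(H_g)\cong\cat{A}(I,\mathbb{F}^{\otimes g})$ with $\mathbb{F}=\int^{X\in\cat{A}}X^\vee\otimes X$ the canonical coend, and by Proposition~\ref{propnonzero0} the iterated product $\widehat{\cat{A}}(H_1)^{\otimes g}\hookrightarrow\widehat{\cat{A}}(H_g)$ is injective, so $\dim\widehat{\cat{A}}(H_g)\geq(\dim\widehat{\cat{A}}(H_1))^g$. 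Hence the hypothesis $\dim\widehat{\cat{A}}(H)<2^g$ — applied to the genus‑one handlebody directly, or via this inequality to a handlebody of any genus $g\geq 1$ — forces $\dim\widehat{\cat{A}}(H_1)<2$. Since $\widehat{\cat{A}}(H_1)\cong\cat{A}(I,\mathbb{F})$ contains the unit $u\colon I\to\mathbb{F}$ of Lyubashenko's Hopf algebra structure on $\mathbb{F}$, which is non‑zero (as recorded in the proof of Proposition~\ref{propnonzero}), we conclude $\dim\cat{A}(I,\mathbb{F})=1$.

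The heart of the argument is to upgrade this to $\mathbb{F}\cong I$. Here the hypothesis on $\cat{A}$ enters: because $\cat{A}$ is unimodular, the coend Hopf algebra $\mathbb{F}$ is unimodular and carries a non‑zero two‑sided integral $\Lambda\colon I\to\mathbb{F}$, characterized by $\mu_{\mathbb{F}}\circ(\mathrm{id}_{\mathbb{F}}\otimes\Lambda)=\Lambda\circ\epsilon_{\mathbb{F}}$ (see \cite{kl} and \cite{shimizuunimodular}; unimodularity is precisely what guarantees that this integral lives in the untwisted hom‑space $\cat{A}(I,\mathbb{F})$ rather than a version twisted by the distinguished invertible object). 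Both $u$ and $\Lambda$ are non‑zero elements of the one‑dimensional space $\cat{A}(I,\mathbb{F})$, so $\Lambda=c\,u$ for some $c\in k^\times$. Substituting this into the integral identity and using right unitality $\mu_{\mathbb{F}}\circ(\mathrm{id}_{\mathbb{F}}\otimes u)=\mathrm{id}_{\mathbb{F}}$ gives $c\,\mathrm{id}_{\mathbb{F}}=c\,(u\circ\epsilon_{\mathbb{F}})$, hence $u\circ\epsilon_{\mathbb{F}}=\mathrm{id}_{\mathbb{F}}$; together with $\epsilon_{\mathbb{F}}\circ u=\mathrm{id}_I$ this exhibits $u$ and $\epsilon_{\mathbb{F}}$ as mutually inverse isomorphisms, so $\mathbb{F}\cong I$.

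Finally I would conclude $\cat{A}\simeq\vect$ from $\mathbb{F}\cong I$ via Frobenius--Perron dimensions: the coend decomposes, up to isomorphism, as a direct sum of objects $P_i\otimes X_i^\vee$ over the simple objects $X_i$ with projective covers $P_i$, so $\operatorname{FPdim}\mathbb{F}=\sum_i\operatorname{FPdim}(P_i)\operatorname{FPdim}(X_i)=\operatorname{FPdim}\cat{A}$ (see \cite{shimizuunimodular,dmno}). Thus $\operatorname{FPdim}\cat{A}=\operatorname{FPdim}I=1$, and a finite tensor category of Frobenius--Perron dimension $1$ is equivalent to $\vect$. Alternatively, from the same decomposition $\mathbb{F}\cong I$ forces a single summand $P_{i_0}\otimes X_{i_0}^\vee\cong I$; being projective, $I$ is then projective, so $\cat{A}$ is semisimple, whereupon $\mathbb{F}\cong\bigoplus_i X_i\otimes X_i^\vee\cong I$ leaves exactly one simple, namely $I$ itself. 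The main obstacle is the middle step: one must carefully invoke the existence and non‑vanishing of the untwisted integral of the coend Hopf algebra and manage the internal‑Hopf‑algebra bookkeeping; the reduction to genus one and the closing dimension count are routine given Proposition~\ref{propnonzero0} and the standard structure theory of finite tensor categories.
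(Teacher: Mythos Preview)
Your reduction to genus one via Proposition~\ref{propnonzero0} is exactly the paper's first step. Where you diverge is in the genus-one conclusion: the paper simply invokes \cite[Lemma~5.7]{shimizumodular} to get $\dim\cat{A}(I,\mathbb{F})\ge 2$ whenever $\cat{A}\neq\vect$ (for unimodular $\cat{A}$), whereas you reprove this by hand. Your argument---that unimodularity furnishes a non-zero two-sided integral $\Lambda:I\to\mathbb{F}$, which in a one-dimensional $\cat{A}(I,\mathbb{F})$ must be a scalar multiple of the unit $u$, forcing $u\circ\epsilon_{\mathbb{F}}=\id_{\mathbb{F}}$ and hence $\mathbb{F}\cong I$, whence $\operatorname{FPdim}\cat{A}=1$---is correct and is essentially the content behind Shimizu's lemma. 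So your proof is valid; it is longer but more self-contained, trading a black-box citation for an explicit Hopf-algebraic computation. The paper's version is terser and offloads the work to the literature; yours makes transparent exactly where unimodularity enters.
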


		\begin{proof}
			We show $\dim\, \widehat{\cat{A}}(H)\ge 2^g $ if $\cat{A} \neq \vect$.
			In fact, thanks to $\dim\, \widehat{\cat{A}}(H) \cdot \dim\, \widehat{\cat{A}}(H) \le \dim\, \widehat{\cat{A}}(H \# H')$ (see Proposition~\ref{propnonzero0}), it suffices to prove $\dim\, \widehat{\cat{A}}(\mathbb{D}^2 \times \mathbb{S}^1)\ge 2$. By \cite[Theorem~7.8]{cyclic} $\widehat{\cat{A}}(\mathbb{D}^2 \times \mathbb{S}^1)\cong \cat{A}(I,\mathbb{F})$, which is at least two-dimensional by~\cite[Lemma~5.7]{shimizumodular} if $\cat{A}\neq \vect$.
			\end{proof}
		
	\begin{theorem}\label{thmnotrrep}
		Let $\cat{A}$ be a unimodular finite ribbon category.
		If $\cat{A}\neq \vect$ and $H$ is a handlebody without embedded disks and genus $g\ge 1$,
		then the $\Map(H)$-representation  $\widehat{\cat{A}}(H)$ is not irreducible.
		\end{theorem}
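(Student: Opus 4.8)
The plan is direct. By Corollary~\ref{corpointing}, applied to the monoidal unit $I\in\cat{A}$ (which is a symmetric commutative Frobenius algebra, using that the finite ribbon category $\cat{A}$ is rigid), the space of conformal blocks $\widehat{\cat{A}}(H)$ carries a distinguished $\Map(H)$-invariant vector $\xi_H$, and by Proposition~\ref{propnonzero} this vector is non-zero. Since $\xi_H$ is fixed by the whole group, the line $k\xi_H\subseteq\widehat{\cat{A}}(H)$ is a non-zero $\Map(H)$-subrepresentation. Hence, in order to conclude that $\widehat{\cat{A}}(H)$ is reducible, it suffices to show that this subrepresentation is proper, i.e.\ that $\dim\widehat{\cat{A}}(H)\ge 2$.

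For the dimension bound I would invoke Lemma~\ref{lemmadim} in its contrapositive form: since $\cat{A}$ is a unimodular finite ribbon category with $\cat{A}\not\simeq\vect$, one has $\dim\widehat{\cat{A}}(H)\ge 2^g$ for a handlebody $H$ of genus $g$. As $g\ge 1$, this gives $\dim\widehat{\cat{A}}(H)\ge 2$, so the invariant line $k\xi_H$ is a proper non-zero $\Map(H)$-subrepresentation and $\widehat{\cat{A}}(H)$ is not irreducible. (Equivalently, one can argue by contradiction: were $\widehat{\cat{A}}(H)$ irreducible, the non-zero invariant line $k\xi_H$ would force $\widehat{\cat{A}}(H)=k\xi_H$, hence $\dim\widehat{\cat{A}}(H)=1$, contradicting Lemma~\ref{lemmadim}.)

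I do not expect a genuine obstacle at this final step, since all of the substantial input has been assembled earlier: the canonical invariant vector comes from Theorem~\ref{thmansularcor} applied to $I$ together with the non-vanishing statement of Proposition~\ref{propnonzero}, while the exponential-in-genus lower bound of Lemma~\ref{lemmadim} itself rests on the injectivity of the connected-sum products $\star$ (Proposition~\ref{propnonzero0}) and on the fact that $\cat{A}(I,\mathbb{F})$ is at least two-dimensional when $\cat{A}\neq\vect$. The only new observation required in this proof is the elementary remark that a non-zero invariant vector always spans a subrepresentation, which is proper exactly when the ambient representation is more than one-dimensional --- and the latter is precisely what the dimension bound of Lemma~\ref{lemmadim} secures away from the trivial case $\cat{A}\simeq\vect$.
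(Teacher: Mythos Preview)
Your proof is correct and follows essentially the same approach as the paper's own proof: the non-zero invariant line $k\xi_H$ comes from Proposition~\ref{propnonzero}, and its properness is deduced from the dimension bound of Lemma~\ref{lemmadim}. The only cosmetic difference is that you separately invoke Corollary~\ref{corpointing} for existence and Proposition~\ref{propnonzero} for non-vanishing, whereas the paper cites Proposition~\ref{propnonzero} alone (which already subsumes the former).
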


	Clearly, if $\cat{A}=\vect$ or $H=\mathbb{B}^3$, the representation would be $k$ with trivial action and hence  irreducible.
	
	\begin{proof}[\slshape Proof of Theorem~\ref{thmnotrrep}]
		The $\Map(H)$-representation  $\widehat{\cat{A}}(H)$ contains the non-zero subrepresentation $k \cdot \xi_H$ (with trivial $\Map(H)$-action)
		 by Proposition~\ref{propnonzero}. Thanks to $\cat{A}\neq \vect$, this is a proper subrepresentation by Lemma~\ref{lemmadim}.
		\end{proof}

	\begin{example}[Symmetric categories]
		Let $\cat{A}$ be a finite ribbon category.
		By Proposition~\ref{propnonzero0} we have an injective map
		\begin{align}
		\star :	\widehat{\cat{A}}(H_1) ^{\otimes g} \to \widehat{\cat{A}}(H_g) \ .
			\end{align}
		Let us think of the handlebody $H_g$ as embedded in $\mathbb{R}^3$.
		 The map $\star$ becomes $\mathbb{Z}_g$-equivariant if we let the generator $1 \in \mathbb{Z}_g$ 
		 act on $\widehat{\cat{A}}(H_1) ^{\otimes g}$ by
		$v_1 \otimes v_2 \otimes \dots \otimes v_g \mapsto v_g \otimes v_1 \otimes \dots \otimes v_{g-1}$ (cyclic permutation),
		 and act on $\widehat{\cat{A}}(H_g)$ by a rotation of $H_g$ by $2\pi /g$
		 as indicated
		 in the following picture (see also~\cite[Section~13.2.1]{farbmargalit}):
		 	\begin{equation}
		 	\begin{tikzpicture}[scale=0.5]
		 		\begin{pgfonlayer}{nodelayer}
		 			\node [style=none] (0) at (-10, 3) {};
		 			\node [style=none] (1) at (-7, 3) {};
		 			\node [style=none] (2) at (-9, 3) {};
		 			\node [style=none] (3) at (-8, 3) {};
		 			\node [style=none] (4) at (-7.75, 3.25) {};
		 			\node [style=none] (5) at (-9.25, 3.25) {};
		 			\node [style=none] (6) at (-8, 0.5) {};
		 			\node [style=none] (7) at (-5, 0.5) {};
		 			\node [style=none] (8) at (-7, 0.5) {};
		 			\node [style=none] (9) at (-6, 0.5) {};
		 			\node [style=none] (10) at (-5.75, 0.75) {};
		 			\node [style=none] (11) at (-7.25, 0.75) {};
		 			\node [style=none] (12) at (-12, 0.5) {};
		 			\node [style=none] (13) at (-9, 0.5) {};
		 			\node [style=none] (14) at (-11, 0.5) {};
		 			\node [style=none] (15) at (-10, 0.5) {};
		 			\node [style=none] (16) at (-9.75, 0.75) {};
		 			\node [style=none] (17) at (-11.25, 0.75) {};
		 			\node [style=none] (18) at (5.5, 3.25) {};
		 			\node [style=none] (19) at (6.5, 3.25) {};
		 			\node [style=none] (20) at (6.75, 3.5) {};
		 			\node [style=none] (21) at (5.25, 3.5) {};
		 			\node [style=none] (22) at (4, 0.75) {};
		 			\node [style=none] (23) at (5, 0.75) {};
		 			\node [style=none] (24) at (5.25, 1) {};
		 			\node [style=none] (25) at (3.75, 1) {};
		 			\node [style=none] (26) at (7, 0.75) {};
		 			\node [style=none] (27) at (8, 0.75) {};
		 			\node [style=none] (28) at (8.25, 1) {};
		 			\node [style=none] (29) at (6.75, 1) {};
		 			\node [style=none] (30) at (6, -1.25) {};
		 			\node [style=none] (32) at (6, 4.5) {};
		 			\node [style=none] (33) at (-2, 1.75) {};
		 			\node [style=none] (34) at (2, 1.75) {};
		 			\node [style=none] (35) at (0, 2.25) {$\star$};
		 			\node [style=none] (36) at (-4, 0.5) {};
		 			\node [style=none] (37) at (-5.25, 3.75) {};
		 			\node [style=none] (38) at (10, -1) {};
		 			\node [style=none] (39) at (10, 4) {};
		 			\node [style=none] (40) at (12.75, 2) {rotation};
		 			\node [style=none] (41) at (-2, 3.5) {permutation};
		 		\end{pgfonlayer}
		 		\begin{pgfonlayer}{edgelayer}
		 			\draw [bend left=90, looseness=1.25] (0.center) to (1.center);
		 			\draw [bend right=90, looseness=1.25] (0.center) to (1.center);
		 			\draw [bend left=90] (2.center) to (3.center);
		 			\draw [bend right=75, looseness=1.25] (5.center) to (4.center);
		 			\draw [bend left=90, looseness=1.25] (6.center) to (7.center);
		 			\draw [bend right=90, looseness=1.25] (6.center) to (7.center);
		 			\draw [bend left=90] (8.center) to (9.center);
		 			\draw [bend right=75, looseness=1.25] (11.center) to (10.center);
		 			\draw [bend left=90, looseness=1.25] (12.center) to (13.center);
		 			\draw [bend right=90, looseness=1.25] (12.center) to (13.center);
		 			\draw [bend left=90] (14.center) to (15.center);
		 			\draw [bend right=75, looseness=1.25] (17.center) to (16.center);
		 			\draw [bend left=90] (18.center) to (19.center);
		 			\draw [bend right=75, looseness=1.25] (21.center) to (20.center);
		 			\draw [bend left=90] (22.center) to (23.center);
		 			\draw [bend right=75, looseness=1.25] (25.center) to (24.center);
		 			\draw [bend left=90] (26.center) to (27.center);
		 			\draw [bend right=75, looseness=1.25] (29.center) to (28.center);
		 			\draw [bend left=90, looseness=1.75] (30.center) to (32.center);
		 			\draw [bend left=90, looseness=1.75] (32.center) to (30.center);
		 			\draw [style=end arrow] (33.center) to (34.center);
		 			\draw [style=end arrow, bend right=45] (36.center) to (37.center);
		 			\draw [style=end arrow, bend right=45] (38.center) to (39.center);
		 		\end{pgfonlayer}
		 	\end{tikzpicture}
		 \end{equation}
		  Here the numbering of the copies of $H_1$ and the sense of the rotation have to be chosen accordingly of course. 
		If $\cat{A}$ is not $\vect$, then the $\mathbb{Z}_g$-action on $\widehat{\cat{A}}(H_1) ^{\otimes g}$ is faithful thanks to
		Lemma~\ref{lemmadim}. By injectivity of $\star$ the same is true for the rotation action on  $ \widehat{\cat{A}}(H_g)$.

		Suppose now that our finite ribbon category $\cat{A}$ is symmetric, and we choose the identity as ribbon structure. By what we have just seen the handlebody group action on $\widehat{\cat{A}}(H_g)$ cannot be trivial unless $\cat{A}=\vect$. Clearly, all elements in the twist group, see \eqref{eqnluft}, act trivially, but the action of $\catf{Out}(F_g)$ can still be non-trivial! 
		If the action on $\widehat{\cat{A}}(H_g)$ extended to the mapping class group of $\partial H_g$, then this could only be a trivial action because $\Map(\partial H_g)$ is generated by Dehn twists, and all of these act trivially \cite[Theorem 4.8]{mwdehn} --- a contradiction.
		We conclude that a finite symmetric tensor category that is not $\vect$ does not extend to a modular functor. 
		This might be a little bit surprising because a first na\" ive intuition might tell us that we can just build a modular functor with trivial mapping class group actions. 
		Note that by Example~\ref{exmult} there are actually symmetric ribbon Grothendieck-Verdier categories that extend to a modular functor, but  \emph{rigid} ones with simple unit do not, unless they are $\vect$!
		\end{example}

	\section{A moduli space for open conformal field theories}
	In this last section, we explain
	how the modular microcosm principle can be used to construct and study 
	\emph{moduli spaces of conformal field theories}. This will be worked out here for \emph{open} conformal field theories; extensions to other situations will be considered elsewhere.
	An idea developed most notably in~\cite{frs1,frs2,frs3,frs4,ffrs}
	is that conformal field theories are described as a pair of a category encoding the monodromy data (this can be for example a tensor category coming from a vertex operator algebra) and a consistent system of correlators (that takes the form of some sort of Frobenius algebra). 
	These pairs can be organized in a moduli space as follows (we consider the open case):
	By sending an open modular functor $\mathfrak{B}$ to the modular algebras with coefficients in $\mathfrak{B}$,
	we obtain a functor $\catf{ModAlg}(\O;-):\catf{ModAlg}(\O) \to \Cat$. 	
	This allows us to define a \emph{moduli space of open conformal field theories (in $\Lexf$)} by taking the nerve and realization of the Grothendieck construction of this functor
	\begin{align}\label{eqndefopencft}
	\catf{OpenCFT} := \left|B\int \left(  \catf{ModAlg}(\O;-):\catf{ModAlg}(\O) \to \Cat  \right)\right| \ . 
	\end{align}
	By its definition and Theorem~\ref{thmopencorrelators} $\catf{OpenCFT}$ is a space whose points are pairs 
	$C=(\cat{A},F)$
	of the monodromy data of the open conformal field theory (which is a pivotal Grothendieck-Verdier category $\cat{A}$)
	and a consistent system of open correlators (which is a symmetric Frobenius algebra $F$ in $\cat{A}$).
	We denote by $\catf{OpenCFT}|_{\cat{A}} \subset \catf{OpenCFT}$ the subspace of
	 $\catf{OpenCFT}$ consisting of those open conformal field theories 
	whose monodromy data is (equivalent to) $\cat{A}$.
	
\begin{theorem}\label{thmseven} Let $C=(\cat{A},F)$ be an open conformal field theory given by its monodromy data, namely a pivotal Grothendieck-Verdier category $\cat{A}$ in $\Lexf$ and a symmetric Frobenius algebra $F$ in $\cat{A}$. Then there is the following seven-term exact sequence
	\begin{center}
		\begin{equation}\centering
			\hspace*{-3cm}\begin{tikzcd}
				1\ar[r] &
				\pi_2(\catf{OpenCFT},C) 	\ar[r]
				& \text{\footnotesize cycl. autom. of $\id_\cat{A}$}    \ar[out=355,in=175,dll]    \\
				\catf{Aut}_\text{sym Frob. alg.}(F) 	 \ar[r] & \pi_1(\catf{OpenCFT},C) \ar[r] & \text{\footnotesize cycl. autom. of $\cat{A}$}/\! \cong \ar[out=355,in=175,dll] \\ \text{\footnotesize sym. Frob. alg. in $\cat{A}$}/\! \cong   \ar[r] &   \text{\footnotesize open CFTs w. monodr.  $\cat{A}$}/\! \cong \ar[r] & \star
			\end{tikzcd} 
	\end{equation}\end{center}
	in which the last two non-trivial terms are pointed sets.
\end{theorem}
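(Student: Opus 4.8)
The plan is to read off the seven-term sequence from the long exact homotopy sequence of a fibration that is built into the definition~\eqref{eqndefopencft} of $\catf{OpenCFT}$. Write $G:=\catf{ModAlg}(\O;-):\catf{ModAlg}(\O)\to\Cat$ for the functor sending an open modular functor to its groupoid of consistent systems of open correlators, so that $\catf{OpenCFT}=|B\int G|$. The projection $p:\int G\to\catf{ModAlg}(\O)$ is a Grothendieck opfibration whose fiber over an open modular functor $\mathfrak{B}$ is the groupoid $G(\mathfrak{B})=\catf{ModAlg}(\O;\mathfrak{B})$. First I would use the standard fact that a Grothendieck opfibration becomes, after nerve and geometric realization, a quasifibration --- this is Thomason's homotopy colimit theorem together with Quillen's Theorem~B, and means concretely that $|BG(\mathfrak{B})|\to|B\int G|\to|B\catf{ModAlg}(\O)|$ is a homotopy fiber sequence over $\mathfrak{B}$. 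Since $\catf{ModAlg}(\O)$ is a $2$-groupoid, over the connected component of $\cat{A}$ its classifying space is $B\catf{Aut}(\cat{A})$ for the automorphism $2$-group $\catf{Aut}(\cat{A})$ of $\cat{A}$, the total space $\catf{OpenCFT}$ is there the homotopy quotient $|B\catf{ModAlg}(\O;\cat{A})|_{h\catf{Aut}(\cat{A})}$, and the homotopy fiber over $\cat{A}$ is
	\[
		\mathcal{F}:=|B\catf{ModAlg}(\O;\cat{A})|\ .
	\]

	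Next I would identify the homotopy groups at the basepoint $C=(\cat{A},F)$. As $\catf{ModAlg}(\O;\cat{A})$ is a $1$-groupoid (Remark~\ref{remgrpd}), $\mathcal{F}$ is a $1$-type, so $\pi_{\ge 2}(\mathcal{F})=0$, while $\pi_0(\mathcal{F})$ is the set of isomorphism classes of correlators and $\pi_1(\mathcal{F},F)=\catf{Aut}_{\catf{ModAlg}(\O;\cat{A})}(F)$. By Corollary~\ref{corext-o} and Proposition~\ref{propsymfa} the groupoid $\catf{ModAlg}(\O;\cat{A})$ is equivalent to the groupoid of symmetric Frobenius algebras in the pivotal Grothendieck-Verdier category $\cat{A}$, which identifies these two with the pointed set of isomorphism classes of symmetric Frobenius algebras in $\cat{A}$ and the group $\catf{Aut}_\text{sym Frob. alg.}(F)$, respectively. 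For the base, $\catf{ModAlg}(\O)$ is a $2$-groupoid, equivalent --- by \cite[Theorem~2.2]{envas}, using $\Pi|B\Envint\As|\simeq\O$ --- to the $2$-groupoid of pivotal Grothendieck-Verdier categories in $\Lexf$; hence $\pi_1(|B\catf{ModAlg}(\O)|,\cat{A})$ is the group of cyclic automorphisms of $\cat{A}$ modulo $2$-isomorphism, $\pi_2(|B\catf{ModAlg}(\O)|,\cat{A})$ is the group of cyclic automorphisms of $\id_\cat{A}$, and $\pi_{\ge 3}=0$. Finally $\pi_1(\catf{OpenCFT},C)$ and $\pi_2(\catf{OpenCFT},C)$ are the terms appearing in the statement, and by Theorem~\ref{thmopencorrelators} the image of $\pi_0(\mathcal{F})$ in $\pi_0(\catf{OpenCFT})$ is the set of isomorphism classes of open conformal field theories with monodromy data $\cat{A}$, that is $\pi_0(\catf{OpenCFT}|_{\cat{A}})$.

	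I would then splice these identifications into the long exact homotopy sequence of the fibration $\mathcal{F}\to\catf{OpenCFT}\to|B\catf{ModAlg}(\O)|$ based at $C$, namely
	\begin{multline*}
		\pi_2(\mathcal{F})\to\pi_2(\catf{OpenCFT},C)\to\pi_2(|B\catf{ModAlg}(\O)|,\cat{A})\to\pi_1(\mathcal{F},F)\to\\
		\pi_1(\catf{OpenCFT},C)\to\pi_1(|B\catf{ModAlg}(\O)|,\cat{A})\to\pi_0(\mathcal{F})\to\pi_0(\catf{OpenCFT}|_{\cat{A}})\ .
	\end{multline*}
	Because $\pi_2(\mathcal{F})=0$, exactness at $\pi_2(\catf{OpenCFT},C)$ gives the leading injection $1\to\pi_2(\catf{OpenCFT},C)$, and after the above identifications the remaining arrows are precisely those of the asserted seven-term sequence: $\pi_2(\catf{OpenCFT},C)$ is abelian, the connecting map to $\catf{Aut}_\text{sym Frob. alg.}(F)$ and the next two maps are group homomorphisms, the last two terms are pointed sets (being $\pi_0$'s), and exactness at $\pi_0(\mathcal{F})$ is the statement that the fibers of $\pi_0(\mathcal{F})\to\pi_0(\catf{OpenCFT}|_{\cat{A}})$ are the orbits of the action of the group of cyclic automorphisms of $\cat{A}$ on isomorphism classes of symmetric Frobenius algebras; the trailing $\to\star$ merely records that this component of $\catf{OpenCFT}$ maps, up to homotopy, to a single point of $|B\catf{ModAlg}(\O)|$.

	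The only step that is not bookkeeping is the first one: I expect the main obstacle to be the bicategorical precision needed to see that the Grothendieck construction of the pseudofunctor $G$ out of the $2$-groupoid $\catf{ModAlg}(\O)$ realizes to an honest quasifibration with homotopy fiber $|B\catf{ModAlg}(\O;\cat{A})|$ over $\cat{A}$ --- i.e.\ that the identification of the fiber with $G(\cat{A})$ and the quasifibration property of Grothendieck opfibrations survive the passage from the strict $2$-categorical setting to the weak framework in which $\catf{ModAlg}(\O)$ and $G$ naturally live. Once an ambient model for such Grothendieck constructions is fixed --- for instance via straightening and unstraightening, or by replacing $\catf{ModAlg}(\O)$, $G$ and $\int G$ with equivalent strict models before applying Thomason's theorem --- the rest is a routine transcription of the categorical equivalences recorded above into homotopy groups.
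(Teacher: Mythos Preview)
Your proposal is correct and follows essentially the same route as the paper: both recognize the seven-term sequence as the long exact homotopy sequence of the fiber sequence $|B\catf{ModAlg}(\O;\cat{A})|\to\catf{OpenCFT}|_{\cat{A}}\to |B\catf{AUT}(\cat{A})|$, obtained from Thomason's theorem applied to the Grothendieck construction defining $\catf{OpenCFT}$, together with the identification of the fiber with symmetric Frobenius algebras via Theorem~\ref{thmopencorrelators}. The paper packages the fiber sequence as a separate proposition and is terser about the identification of the individual homotopy groups, whereas you spell these out and flag the bicategorical subtlety in applying Thomason's theorem --- a point the paper handles by the same citation without further comment.
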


We will prove this result by realizing that this seven-term
exact sequence is the long exact sequence of homotopy groups for the following fiber sequence:

\begin{proposition}\label{propfiberseq}
	For any pivotal Grothendieck-Verdier category $\cat{A}$ in $\Lexf$, there is a homotopy fiber sequence
	\begin{align}
	|B\catf{SymFrobAlg}(\cat{A})|\to	\catf{OpenCFT}|_{\cat{A}} \to |B \catf{AUT}(\cat{A})|
		\end{align}
	\end{proposition}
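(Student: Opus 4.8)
The plan is to identify the three spaces in the claimed fiber sequence as nerves (geometric realizations of nerves) of categories or groupoids, and to produce the fiber sequence from a fibration of categories in the sense of Quillen's Theorem~B. First I would unpack the definitions. By construction~\eqref{eqndefopencft}, $\catf{OpenCFT}|_\cat{A}$ is the realization of the nerve of the Grothendieck construction of the functor $\catf{ModAlg}(\O;-)$ restricted to the full subgroupoid of $\catf{ModAlg}(\O)$ on objects equivalent to the modular extension $\widehat{\cat{A}}$; by Theorem~\ref{thmopencorrelators} (via the equivalence $\omf \simeq \Ao$ of~\cite{envas} and Corollary~\ref{corext-o}), the fiber over such an object is equivalent to the groupoid of symmetric Frobenius algebras in $\cat{A}$, which I will denote $\catf{SymFrobAlg}(\cat{A})$. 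The base should be $|B\catf{AUT}(\cat{A})|$, where $\catf{AUT}(\cat{A})$ is the automorphism $2$-group of $\cat{A}$ as a pivotal Grothendieck-Verdier category in $\Lexf$ (equivalently, a cyclic $\As$-algebra by~\cite[Theorem~4.12]{cyclic}); since bicategorical cyclic algebras over a cyclic operad form a $2$-groupoid~\cite[Proposition~2.18]{cyclic}, the full sub-$2$-groupoid on objects equivalent to $\cat{A}$ has the homotopy type $|B\catf{AUT}(\cat{A})|$.

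The key step is to recognize the projection $\int\big(\catf{ModAlg}(\O;-)\big)\to \catf{ModAlg}(\O)$, restricted over the component of $\widehat{\cat{A}}$, as a \emph{Grothendieck opfibration} (indeed, the functoriality of $\catf{ModAlg}(\O;-)$ in morphisms of open modular functors, built from the $2$-functoriality of the modular-algebra construction, makes it a (strict or pseudo-) opfibration), hence in particular a homotopy fibration after realization. Then I would apply Quillen's Theorem~B — or, more cleanly here, the fact that for a Grothendieck (op)fibration $p:\mathcal E\to\mathcal B$ with fibers over a connected base that are all equivalent, $|B\mathcal E|\to|B\mathcal B|$ is a fibration with fiber $|B(\text{fiber})|$. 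Since the base groupoid (the component of $\widehat{\cat{A}}$, equivalently of $\cat{A}$) is connected, all fibers are equivalent to the fiber over $\widehat{\cat{A}}$ itself, namely $\catf{ModAlg}(\O;\widehat{\cat{A}})=\catf{ModAlg}(\O;\Ao)\simeq \catf{SymFrobAlg}(\cat{A})$ by Theorem~\ref{thmopencorrelators}. This gives the homotopy fiber sequence
\begin{equation}
	|B\catf{SymFrobAlg}(\cat{A})| \to \catf{OpenCFT}|_\cat{A} \to |B\catf{AUT}(\cat{A})| \ .
\end{equation}

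A subtlety to handle carefully is that everything is weak/bicategorical: $\catf{ModAlg}(\O)$ is a $2$-groupoid, not a $1$-groupoid, and $\catf{ModAlg}(\O;-)$ is a homotopy-coherent (pseudo)functor valued in $\Cat$ (really in groupoids, by Remark~\ref{remgrpd}). So I would use the bicategorical Grothendieck construction and the corresponding version of Theorem~B for (op)fibrations of $2$-categories — the relevant statement is that the realized nerve of a $2$-categorical opfibration over a connected base is a quasifibration with fiber the realized nerve of the (homotopy-invariant) fiber. The coherence data of $\catf{ModAlg}(\O;-)$ feeds into the composition in the Grothendieck construction exactly as recalled in the paragraph before Definition~\ref{defopoverop}, so this is the natural home for the argument. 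The main obstacle is precisely this bookkeeping: verifying that the projection is an opfibration in the appropriate weak sense and that the fibers over different basepoints are connected by the (cartesian) lifts in a way compatible enough to invoke Theorem~B; once that is in place, connectedness of the base and Theorem~\ref{thmopencorrelators} identify all three terms and finish the proof. The seven-term exact sequence of Theorem~\ref{thmseven} is then read off as the tail of the long exact sequence of homotopy groups of this fibration, using $\pi_i|B\catf{AUT}(\cat{A})|$ ($i=1,2$) $=$ isomorphism classes of / automorphisms of cyclic automorphisms of $\cat{A}$, and $\pi_i|B\catf{SymFrobAlg}(\cat{A})|$ ($i=0,1$) $=$ isomorphism classes of / automorphisms of symmetric Frobenius algebras, with $\pi_2$ vanishing since $\catf{SymFrobAlg}(\cat{A})$ is a $1$-groupoid.
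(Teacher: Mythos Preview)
Your proposal is correct and follows essentially the same route as the paper. The paper's proof invokes Thomason's theorem to identify $\catf{OpenCFT}$ with the homotopy colimit $\hocolimsub{\mathfrak{B}\in\catf{ModAlg}(\O)}|B\catf{ModAlg}(\O;\mathfrak{B})|$, then restricts to the component of $\cat{A}$ to recognize $\catf{OpenCFT}|_\cat{A}$ as the homotopy quotient $|B\catf{SymFrobAlg}(\cat{A})|/\!/\catf{AUT}(\cat{A})$, which immediately yields the fiber sequence; your use of the Grothendieck opfibration together with Quillen's Theorem~B is the same argument in a different packaging, since Thomason's theorem is precisely the statement that realization takes the Grothendieck construction to the homotopy colimit (and hence, over a connected $2$-groupoid base, to a homotopy quotient).
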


\begin{proof}
		Thanks to \cite[Theorem~1.2]{thomason}, there is a homotopy equivalence
	\begin{align}
	\catf{OpenCFT} \simeq \hocolimsub{\mathfrak{B} \in \catf{ModAlg}(\O) } |B\catf{ModAlg}(\O;\mathfrak{B})| \ . 
	\end{align}
	From Theorem~\ref{thmopencorrelators}, we can now conclude that $\catf{OpenCFT}|_{\cat{A}}$ is homotopy equivalent to the homotopy quotient
	$|B\catf{SymFrobAlg}(\cat{A})| // \catf{AUT}(\cat{A})$ of the 
	the space $|B\catf{SymFrobAlg}(\cat{A})|$ by the 2-group $\catf{AUT}(\cat{A})$.
	This gives us the desired homotopy fiber sequence.
	\end{proof}

\begin{example}
	Let $\cat{A}$ be a pivotal finite tensor category. Consider the symmetric Frobenius algebra $I\in \cat{A}$, and the open conformal field theory $C=(\cat{A},I)$.
	Then $\catf{Aut}_\text{sym Frob. alg.}(I)=1$, which implies that $\pi_2(\catf{OpenCFT},C)$ is the group of cyclic automorphisms of $\id_\cat{A}$, i.e.\ monoidal automorphisms $\lambda_X:X\to X$ with $\lambda_X^\vee = \lambda_{X^\vee}$ for all $X\in \cat{A}$. Since all cyclic automorphisms of $\cat{A}$ preserve the symmetric Frobenius algebra $I$, $\pi_1(\catf{OpenCFT},C)$ is the group of cyclic automorphisms of $\cat{A}$ up to isomorphism. 
	\end{example}

\small
\newcommand{\etalchar}[1]{$^{#1}$}

	\vspace*{0.5cm}
	\noindent \textsc{Université Bourgogne Europe, CNRS, IMB UMR 5584, F-21000 Dijon, France}

\end{document}